\documentclass[12pt]{article}
\usepackage{amsmath,amsthm,amssymb,amsfonts,latexsym,bbm,epsfig,pstricks}
\usepackage[latin9]{inputenc}
\setlength{\oddsidemargin}{-0.4cm}
\setlength{\topmargin}{-1cm}
\textheight 22.4cm
\textwidth 17cm
\usepackage{indentfirst}
\usepackage{color}
\usepackage{mathtools}
\newtheorem{theo}{Theorem}[section]
\newtheorem{prop}[theo]{Proposition}
\newtheorem{lem}[theo]{Lemma}
\newtheorem{cor}[theo]{Corollary}
\newtheorem{remark}[theo]{Remark}

\newtheorem{defn}[theo]{Definition}
\newtheorem{exa}[theo]{Example}

\def\qed{{\hfill $\square$ \bigskip}}

\newcommand{\ra}{\rightarrow}

\newcommand{\ce}{\mbox{$\mathcal E$}}
\newcommand{\cf}{\mbox{$\mathcal F$}}

\newcommand{\bp}{\mbox{$\mathbb P$}}

\newcommand{\br}{\mbox{$\mathbb R$}}
\newcommand{\bn}{\mbox{$\mathbb N$}}
\newcommand{\be}{\mbox{$\mathbb E$}}

\newcommand\E{\mathbb E}
\newcommand{\cb}{\mathcal B}

\def\qed{\hbox {\rlap{$\sqcup$}{$\sqcap$}}}
\DeclareMathOperator{\Ima}{Im}

\newenvironment{customthm}[1]
  {\innercustomthm}
  {\endinnercustomthm}

\begin{document}
\title{
Besov spaces and random walks on a hyperbolic group: boundary traces and reflecting extensions of Dirichlet forms} 
\author{Pierre Mathieu\thanks{Aix-Marseille Universit\'{e}, CNRS, Centrale Marseille,
 I2M UMR 7373, 13453 Marseille, FRANCE. {\tt pierre.mathieu@univ-amu.fr}}
 \and Yuki Tokushige\thanks{The University of Manchester, Oxford Road,
Manchester, M13 9PL,
United Kingdom. {\tt yuki.tokushige@manchester.ac.uk}}}

\date{}
\maketitle


\begin{abstract} 
We show the existence of a trace process at infinity for random walks on hyperbolic groups of conformal dimension $<2$ and relate it to the existence of a reflecteing random walk.  
To do so, we employ the theory of Dirichlet forms which connects the theory of symmetric Markov processes to functional analytic perspectives. 
We introduce a family of Besov spaces associated to random walks and prove that they are isomorphic to some of the Besov spaces constructed from the co-homology of the group studied in Bourdon-Pajot (2003). 
 We  also study the regularity of harmonic measures of random walks on hyperbolic groups using the potential theory associated to Dirichlet forms.
\end{abstract}

\section{Introduction}\label{intro}
In this introduction, we will explain jump processes on a boundary induced by a stochastic process (e.g. Brownian motions or random walks) on an {\it inner} domain (or graph). The induced jump process on a boundary and the corresponding Dirichlet form
 illustrate probabilistic and analytic aspects of the problem we consider in this paper.
We start with two motivational examples. ({\bf N.B.} In Section 1 and 2, we will mention many notions whose definitions are given in subsequent sections. In particular, readers who are not familiar with Dirichlet forms may skip the first two sections and see Section 3 and 7.)\\\par

{\bf Jump process on the circle. 
 } Consider a reflecting Brownian motion $({\tt BM}_t)$ on a 2-dimensional closed disc ${\cal D}:=\{(x,y)\in\mathbb{R}^2:x^2+y^2\leq1\}$ started at the origin. The path of $({\tt BM}_t)$ after its first hitting to the circle $S^1:=\{(x,y)\in\mathbb{R}^2:x^2+y^2=1\}$ can be decomposed into countably many boundary excursions\footnote{ An excursion is a continuous path that starts from some point on $S^1$, also ends on $S^1$ and lies in the interior of $\cal D$ in the meantime. The original path of $({\tt BM}_t)$  can be reconstructed by gluing together the sequence of its excursions off $S^1$ on the right time-scale. We refer to \cite{Bu} for details.}.

 Intuitively speaking, by forgetting how $({\tt BM}_t)$ moves in the interior of ${\cal D}$, we obtain the jump process on $S^1$ that jumps from starting points of boundary excursions to their endpoints. \par
 In order to turn this intuition into a rigorous construction, we will use the theory of {\it Dirichlet forms}. See \cite{FOT}  and \cite{CF} for background. Specifically, Example 1.2.3 in \cite{FOT} is particularly relevant to what we will explain below. 
 
The Dirichlet form corresponding to $({\tt BM}_t)$ on ${\cal D}$ is given by the Dirichlet integral
\begin{align*}
{\cal E}^{\cal D}(f,g):=\dfrac{1}{2}\int_{\cal D}\left(\dfrac{\partial f}{\partial x}\dfrac{\partial g}{\partial x}+\dfrac{\partial f}{\partial y}\dfrac{\partial g}{\partial y}\right)dxdy,
\end{align*}
with  domain 
\begin{align*}
W^{1,2}({\cal D}):=\left\{f\in L^2({\cal D}):\ \dfrac{\partial f}{\partial x},\dfrac{\partial f}{\partial y}\in L^2({\cal D})\right\}.
\end{align*}

In order to introduce the Dirichlet form corresponding to the jump process on $S^1$ induced by $({\tt BM}_t)$, we recall the {\it Poisson integral}, which gives the harmonic extension of some prescribed boundary value. For a function $\phi:S^1\to\mathbb{R}$, define its Poisson integral, denoted by $H^{\cal D}\phi$, as follows: 
\begin{align}\label{poisson int}
H^{\cal D}\phi(z):=\int_0^{2\pi}\dfrac{1-r^2}{1-2r\cos(\theta-\theta')+r^2}\phi(\theta')\dfrac{d\theta'}{2\pi},
\end{align}
where $z=re^{i\theta}$ and $\frac{d\theta'}{2\pi}$ is the uniform probability measure on $S^1$. It is well-known that the following Dirichlet form corresponds to the jump process on $S^1$ described above:
\begin{align}\label{circle}
{\cal F}^{S^1}&:=\{\phi\in L^2(S^1, d\theta/2\pi):\ H\phi\in W^{1,2}({\cal D})\},\nonumber\\
{\cal E}^{S^1}(\phi,\psi)&:={\cal E}^{\cal D}(H^{\cal D}\phi,H^{\cal D}\psi)\ \ \text{for}\ \phi,\psi\in{\cal F}^{S^1}.
\end{align}

Moreover, ${\cal E}^{S^1}$ has a more explicit expression called the {\it Douglas integral}:
\begin{align}\label{douglas}
{\cal E}^{S^1}(\phi,\psi):=\dfrac{\pi}{4}\int_0^{2\pi}\int_0^{2\pi}\sin^{-2}\left(\dfrac{\theta-\theta'}{2}\right)(\phi(\theta)-\phi(\theta'))(\psi(\theta)-\psi(\theta'))\dfrac{d\theta}{2\pi}\dfrac{d\theta'}{2\pi}. 
\end{align} 

The probability measure $\frac{d\theta}{2\pi}$ on $S^1$ is the first hitting distribution of $({\tt BM}_t)$ started at the origin. Interested readers may consult page 13-14 of \cite{FOT} for the proof of \eqref{circle} and \eqref{douglas}.\par
We remark that the integral kernel $\sin^{-2}\left(\frac{\theta-\theta'}{2}\right)$ in \eqref{douglas} is comparable to $|\theta-\theta|^{-2}$ when $|\theta-\theta'|\ll1$. This shows a glimpse of the fact that certain Besov spaces naturally appear in the study of induced jump processes on a boundary. \par
The Markov process corresponding to the Dirichlet form $({\cal E}^{S^1}, {\cal F}^{S^1})$ is the jump process on $S^1$ induced by a reflecting Brownian motion on ${\cal D}$.
It is called the {\it boundary trace} of  reflecting Brownian motion in the literature.

Another rigorous description of this induced jump process, 
which is more directly related to what we intuitively explained at the beginning, is to use the theory of {\it time-changes} of Markov processes. 
As a matter of fact, it is known that the induced jump process on $S^1$ is a reflecting Brownian motion time-changed by the inverse of  its boundary local time on $S^1$.
 See Section 5.3, Example ($3^{\circ}$) in \cite{CF} for detailed discussions
 \footnote { The boundary local time, $L_t$, is a non-decreasing continuous additive functional of $({\tt BM}_t)$ that only increases at times when the reflecting Brownian motion belongs to $S^1$. Let $(L^{-1}_t)$ be the right-continuous 
 inverse of the function $t\to L_t$. Then the time-changed process $t\to {\tt BM}\circ{L^{-1}_t}$ is a jump process on $S^1$ and its Dirichlet form is the one defined in (\ref{circle}).}.  
 
 The example of a reflecting Brownian motion on ${\cal D}$ already has a hyperbolic flavor. 
 Note that an Euclidean Brownian motion on ${\cal D}$ is a time change of a hyperbolic Brownian motion on the Poincar\'e disc
 \footnote{\label{foot3}
 The Poincar\'e disc is the interior of ${\cal D}$ equipped with the Riemannian metric 
 $\frac{4(dx^2+dy^2)}{(1-x^2-y^2)^2}$ and the volume form $\frac{4dxdy}{(1-x^2-y^2)^2}$.
 The hyperbolic Brownian motion in the Poincar\'e disc, say $({\tt w}_t)$, can be constructed as the solution of the stochastic differential equation 
$$ d{\tt w}_t=\frac {1}{2}(1-\Vert {\tt w}_t\Vert^2) d{\tt B}_t,$$
 where $\Vert .\Vert$ is the Euclidean norm. 
Define $$A(t)=\int_0^t \frac {(1-\Vert{\tt w}_s\Vert^2)^2} {4}ds.$$
Then $A$ is an increasing bijection from $\mathbb{R}_+$ to the interval $[0,A(+\infty))$. Let $A^{-1}$ be its inverse. 
Then the process 
$t\to {\tt w}_{A^{-1}(t)}$ turns out to be a Euclidean Brownian motion considered up to its hitting time of $S^1$ 
i.e. it has the same law as the process $({\tt B}_t)$ up to time  
 $T_1=\inf\{t>0\,;\, \Vert{\tt B}_t\Vert=1\}$. 
}.
 Since the operation of taking a boundary trace is independent the time-scale of the process, the boundary jump process of reflecting Brownian motion we already discussed can also be constructed as the boundary process of hyperbolic Brownian motion. \footnote{ Recall from Footnote \ref{foot3} that the hyperbolic Brownian motion is in fact a time-change of planar Brownian motion only up to its first hitting of $S^1$. Therefore  the construction of the boundary process from hyperbolic Brownian motion involves as an extra step the reconstruction of reflecting Brownian motion from Brownian motion killed on $S^1$.  A similar issue arises in the case of jump processes on  boundaries of trees discussed in the next paragraph. We address it in our context in Section \ref{reflecting-rw} of the paper.} 
 
 Let us finally mention a link with Calder\'on's inverse problem related from Electrical Impedance Tomography. Calder\'on's problem is to recover the unknown conductivity $\kappa$ in the elliptic equation
 $$\nabla\cdot(\kappa\nabla f)=0\ \ \text{in}\ {\tt Dom}$$
 from the measurements on $\partial({\tt Dom})$ expressed by the Dirichlet-to-Neumann map. 
 In the paper \cite{PS} the authors give a probabilistic interpretation of Calder\'on's inverse problem
using the boundary trace process of a reflecting diffusion process on an Euclidean domain ${\tt Dom}\subset\mathbb{R}^d$ ($d\geq2$). \\\par
 
{\bf Jump process on the boundary at infinity of an infinite tree. } 
Motivated by the example explained above, Kigami \cite{Ki} studied an analogous problem for transient random walks on an infinite tree. Suppose that we have a rooted infinite tree $T$. For simplicity, we consider a simple random walk $({\tt RW}_n)$ on $T$ in what follows. Furthermore, assume that $({\tt RW}_n)$ is transient.
Because of transience, roughly speaking, $({\tt RW}_n)$ escapes to "infinities" as $n\to\infty$. 
One way to make this loose description rigorous is to use the {\it geometric boundary} $\Sigma$ of $T$, which is 
the collection of infinite geodesics emanating from the root.
The simple random walk $({\tt RW}_n)$ almost surely converges as $n\to\infty$ to a random point of $\Sigma(T)$, say ${\tt RW}_\infty$, which is given by the loop-erasure of its trajectory.   
Denote by $\nu^T$ the distribution of ${\tt RW}_{\infty}$; this probability measure on $\Sigma(T)$ is called the {\it harmonic measure} of $({\tt RW}_n)$.
We next define the energy form $(\mathcal{E}^T,\mathcal{F}^T)$ by
\begin{align*}
\mathcal{F}^T&:=\{f:T\to\mathbb{R}\ :\ \mathcal{E}^T(f,f)<\infty\}, \text{where}\\
\mathcal{E}^T(f,g)&:=\frac{1}{2}\sum_{x,y\in T: x\sim y}(f(x)-f(y))(g(x)-g(y))\ \text{for}\ f,g\in{\cal F}^T.
\end{align*}
Analogously to the Poisson integral \eqref{poisson int}, we define a linear operator $H^T$ which transforms functions on $\Sigma(T)$ into harmonic functions on $T$ in the following way:
for a function $u:\Sigma(T)\to \mathbb{R}$ and $x\in T$, let 
\begin{align}\label{HAR-EX}
H^Tu(x):=\mathbb{E}[u({\tt RW}_{\infty})|{\tt RW}_0=x].
\end{align}
We finally introduce the Dirichlet form $(\mathcal{E}^{\Sigma(T)}, \mathcal{F}^{\Sigma(T)})$ on $\Sigma(T)$ in an analogus way to \eqref{circle} as follows: 
\begin{align}\label{tree}
{\cal F}^{\Sigma(T)}&:=\{u\in L^2(\Sigma(T),\nu^T):\ H^Tu\in \mathcal{F}^T\},\nonumber\\
{\cal E}^{\Sigma(T)}(u,v)&:={\cal E}^{T}(H^{T}u,H^{T}v)\ \ \text{for}\ u,v\in{\cal F}^{\Sigma(T)}.
\end{align}
It is shown in \cite{Ki}, among other results, that $(\mathcal{E}^{\Sigma(T)}, \mathcal{F}^{\mathcal{M}^T})$ is {\it regular} on $L^2(\Sigma(T),\nu^{T})$, which is a condition that guarantees the existence of the Markov process on $\Sigma(T)$ corresponding to $(\mathcal{E}^{\Sigma(T)}, \mathcal{F}^{\Sigma(T)})$. Moreover, an explicit formula for ${\cal E}^{\Sigma(T)}(u,v)$, which is reminiscent of the Douglas integral \eqref{douglas}, is also obtained. See Theorem 5.6 in \cite{Ki}. Since $\Sigma(T)$ is homeomorphic to the Cantor set under suitable assumptions on the geometry of $T$, the Markov process corresponding to $(\mathcal{E}^{\Sigma(T)}, \mathcal{F}^{\Sigma(T)})$ can be viewed as a jump process on the Cantor set.
We finally notice that many arguments in \cite{Ki} heavily utilize explicit computations peculiar to tree structure. Therefore it seems very difficult to extend his arguments to graphs that have more robust geometric structure.\\\par

{\bf Jump process on the Gromov boundary at infinity of a hyperbolic group. 
} 
Motivated by two examples summarized above, we will consider in this paper an analogous problem for {\it hyperbolic groups}. A proper and geodesic metric space $(\tt{X}, \tt{D})$ is said to be {\it Gromov-hyperbolic} if the following condition is satisfied: there exists a constant $\delta>0$ such that for any geodesic triangle in ${\tt X}$, each side of the triangle is a subset of the $\delta$-neighborhood of the other two sides. A finitely-generated group is called a {\it hyperbolic group} if its Cayley graph is Gromov-hyperbolic. This property is independent of the choice of the generating set of the group since Gromov-hyperbolicity is known to be stable with respect to rough-isometries.\par 
To a given Gromov-hyperbolic metric space $(\tt{X}, \tt{D})$, we can associate a kind of boundary $\partial {\tt X}$ at infinity, called the {\it Gromov boundary}, which is defined as follows: let $x\in\tt{X}$ be a fixed base point. For $y,z\in\tt{X}$, we define the {\it Gromov product} $(y,z)_x$ of $y,z$ with respect to $x$ by
\begin{align}\label{def:product}
(y,z)_x:=\dfrac{{\tt D}(x,y)+{\tt D}(x,z)-{\tt D}(y,z)}{2}.
\end{align}
We say that the sequence $(x_n)\subset {\tt X}$ converges to infinity if
$\lim\inf_{i,j\to\infty}(x_i,x_j)_x=\infty$, and this notion does not depend on the choice of the base point.
Two sequences $(x_n), (y_n)$ converging to infinity are said to be equivalent if
$\lim\inf_{i,j\to\infty}(x_i,y_j)_x=\infty$. This condition indeed yields an equivalence relation and does not depend on the choice of the base point. We will write $[(x_n)]$ for the equivalence class of $(x_n)$. 
 We define the Gromov boundary $\partial{\tt X}$ by
 $$\partial X:=\{[(x_n)]:\ (x_n)\ \text{is a sequence in } {\tt X} \text{ and converges to infinity}\}.$$
 For $\xi\in\partial{\tt X}$ and $r\geq0$, we define
 $$V(\xi,r):=\{\eta\in\partial{\tt X};\ \text{there exist }(x_n),(y_n)\text{ s.t. }[(x_n)]=\xi,[(y_n)]=\eta,\ \liminf_{i,j\to\infty}(x_i,y_j)_x\geq r\}.$$
We endow $\partial {\tt X}$ with the topology whose basis of neighborhoods is given by $\{V(\xi,r): \xi\in\partial{\tt X},\ r>0\}.$ We extend the Gromov product to $\partial {\tt X}$ as follows: for $\xi,\eta\in\partial{\tt X}$,
\begin{align}\label{def:product-boundary}
(\xi,\eta)_x:=\sup\liminf_{i,j\to\infty}(x_i,y_j)_x,
\end{align}
where the supremum is taken over all sequences $(x_n),(y_n)$ with $\xi=[(x_n)]$ and $\eta=[(y_n)]$.
Interested readers may refer to \cite{GH} for more detailed information.
\par\medskip
Let $\Gamma$ be a non-elementary hyperbolic group, then 
$\Gamma$ is infinite, countable, non-amenable and discrete. Let $\mu$ be a symmetric probability measure on $\Gamma$ the support of which generates $\Gamma$. We need to assume a certain moment condition of $\mu$ for a reason we will explain later. Define a quadratic form $({\cal E}^{\mu},{\cal F}^{\mu})$ by
\begin{align*}
{\cal F}^{\mu}&:=\{f:\Gamma\to\mathbb{R}:\ {\cal E}^{\mu}(f,f)<\infty\}, \text{where}\\
{\cal E}^{\mu}(f,g)&:=\dfrac{1}{2}\sum_{x,y\in\Gamma}\mu(x^{-1}y)(f(x)-f(y))(g(x)-g(y))\ \text{for}\ f,g\in{\cal F}^{\mu}.
\end{align*}
Let $(R_n)$ be the RW driven by $\mu$ started at the identity ${\it id}$ of $\Gamma$. 
It is shown in \cite{Ka} that almost all trajectories of the random walk $(R_n)$ converge to some limit point $Z_\infty$ on the Gromov boundary $\partial\Gamma$ of $\Gamma$. 
The law of the random variable $Z_\infty$ is called the harmonic measure  of $(R_n)$ and denoted by $\nu$. 

 By this result, analogously to \eqref{poisson int} and \eqref{HAR-EX}, we have a linear operator $H$ that maps functions on $\partial\Gamma$ to harmonic functions on $\Gamma$: 
 $$Hu(x)=\E[u(x\cdot Z_\infty)]=\int u(x\cdot\xi)\, d\nu(\xi),$$ where $\E$ denotes the expectation with respect to the law of the random walk and $x\cdot\xi$ is the natural action of $\Gamma$ on its boundary. 
 Following the analogy with \eqref{circle} and \eqref{tree}, define the Dirichlet form on $L^2(\partial\Gamma,\nu)$ as follow:
\begin{align*}
{\cal F}^{\partial\Gamma,\mu}&:=\{u\in L^2(\partial\Gamma,\nu):\ Hu\in \mathcal{F}^{\mu}\},\nonumber\\
{\cal E}^{\partial\Gamma,\mu}(u,v)&:={\cal E}^{\mu}(Hu,Hv)\ \ \text{for}\ u,v\in{\cal F}^{\partial\Gamma,\mu}.
\end{align*}
It follows from  results in \cite{Na} and \cite{Sil} that ${\cal E}^{\partial\Gamma,\mu}(u,v)$ has an expression similar to the Douglas integral \eqref{douglas}. It reads 
\begin{align}\label{hyp douglas}
{\cal E}^{\partial\Gamma,\mu}(u,v)&=\int\int_{\partial\Gamma\times\partial\Gamma}(u(\xi)-u(\eta))(v(\xi)-v(\eta))\Theta^{\mu}(\xi,\eta)d\nu(\xi)d\nu(\eta)\ \ \text{for}\ u,v\in{\cal F}^{\partial\Gamma,\mu},\ \ \text{where}\nonumber\\
{\cal F}^{\partial\Gamma,\mu}&=B_2(\mu):=\left\{u\in L^2(\partial\Gamma,\nu);\ \int\int_{\partial\Gamma\times\partial\Gamma}(u(\xi)-u(\eta))^2\Theta^{\mu}(\xi,\eta)d\nu(\xi)d\nu(\eta)<+\infty\right\}.
\end{align}
See Proposition \ref{=} and Definition \ref{emu}. The integral kernel $\Theta^{\mu}(\cdot,\cdot)$ appearing in \eqref{hyp douglas} is the {\it Na\"{i}m kernel} introduced in \cite{Na}, see Definition \ref{naim kernel}. 

The expression \eqref{hyp douglas} by itself is not enough to show the regularity of $({\cal E}^{\partial\Gamma,\mu}, B_2(\mu))$ on $L^2(\partial\Gamma,\nu)$. To overcome this problem, we consider two {\it classes of Besov spaces} consisting of {\it (a)} Besov spaces associated to symmetric probability measures with finite second moments, and {\it (b)} Besov spaces associated to metrics in the {\it Ahlfors-regular conformal gauge}
introduced in \cite{BP} (see Definition \ref{def:arcd}). Under the assumption that the Ahlfors-regular conformal dimension of $\partial\Gamma$ is less than 2,
through a careful comparison between Besov spaces in these classes using martingale arguments, see Theorem \ref{reg}, we will obtain the regularity of those spaces {\it at once}.
This is where we need to assume that $\mu$ has a  finite second moment, since this enables us to compare energies of functions on $\Gamma$ up to multiplicative constants. See Proposition \ref{psc}.  \par
Having established the regularity property, we prove that the jump process on $\partial\Gamma$ corresponding to $({\cal E}^{\partial\Gamma,\mu}, B_2(\mu))$ is a certain time-change of a reflecting random walk in a sense to be made precise in Section \ref{reflecting-rw}. This last result completes the analogy with the case of a reflecting Brownian motion on ${\cal D}$. 
Finally, we prove that harmonic measures of random walks with a finite first moment are smooth in a potential theoretic sense. 

At the end of the paper, we will 
construct a boundary jump process that, as in the two motivating examples, is a time-change of a reflecting random walk under the assumption of the Ahlfors-regular conformal dimension being less than $2$. 
The boundary of a regular tree is a Cantor set of Ahlfors-regular conformal dimension $0$. The boundary of the two-dimensional disc considered in the first paragraph of this introduction is the circle $S^1$ whose Ahlfors-regular conformal dimension is $1$. Thus our condition on the Ahlfors-regular conformal dimension being less than $2$ extends to new examples the construction of boundary jump processes.


\section{Detailed summary of the paper}\label{summary}
The paper is based on  the interplay of the following three subjects:
 hyperbolic geometry, analysis on metric spaces, and
 the theory of Dirichlet forms and symmetric Markov processes associated to them.
 We begin this summary with briefly explaining
 the paper \cite{BP}, which is the starting point of our study and explains the interplay of 
 the first two of the three subjects mentioned above.\\\par
 {\bf Besov spaces.}\ \ \ Let $(Z,\rho)$ be a uniformly perfect compact metric space which carries a doubling measure.
 In \cite{BP}, the authors introduced a class of {\it Besov spaces} which is canonically associated to
 a certain conformal structure of $(Z,\rho)$.
 For any metric $d$ in the {\it Ahlfors-regular conformal gauge} $J_{AR}(Z,\rho)$ of $(Z,\rho)$,
 we define a Besov space $(\mathcal{E}^{Z,d},B_2(d))$ on $Z$ by
 \begin{align*}
 \mathcal{E}^{Z,d}(u,v)&:=\int\int_{Z\times Z}\frac{(u(\xi)-u(\eta))(v(\xi)-v(\eta))}{d(\xi,\eta)^{2q}}
 d\mathcal{H}_{d}(\xi)d\mathcal{H}_{d}(\eta),\\
 B_2(d)&:=\{u\in L^2(Z,\mathcal{H}_{d})\ ;\ \mathcal{E}^{Z,d}(u,u)<\infty\},
 \end{align*}
 where $q$ is the Hausdorff dimension of $(Z,d)$, and ${\cal H}_{d}$ is a Hausdorff measure
 of $d$.
 In \cite{BP}, the authors
  constructed a hyperbolic graph $\Gamma_{d}$ whose Gromov boundary is equivalent to $(Z,d)$.
 Then, they showed that the Besov space associated to a metric $d$ in $J_{AR}(Z,\rho)$
 is Banach isomorphic to the set of boundary values of 
 the elements in the $\ell_2$-{\it cohomology} group of $\Gamma_{d}$.
 They also showed that for any metrics $d,d'$ in $J_{AR}(Z,\rho)$,
 the two graphs $\Gamma_{d}$ and $\Gamma_{d'}$
 are quasi-isometric, and this quasi-isometry induces an isomorphism between the $\ell_2$-cohomologies 
 of $\Gamma_{d}$ and  $\Gamma_{d'}$.
 These results imply that all the Besov spaces associated to metrics $d$ in $J_{AR}(Z,\rho)$ are 
 Banach isomorphic, and thus are canonically associated to the conformal structure on $Z$. \par 
 
 In this paper, we shall consider the Besov spaces $B_2(d)$ when the compact metric space $(Z,\rho)$ is 
 the Gromov boundary of a non-elementary hyperbolic group $\Gamma$ equipped with a visual metric, see Definition \ref{df:visual}. 

 
 In the first part of this paper, we also define Besov spaces associated to random walks on $\Gamma$. 
 We will always assume that the driving measure $\mu$ of a random walk on $\Gamma$
 is symmetric and {\it admissible}, which means that the support of $\mu$ generates $\Gamma$.  
  For $k\geq1$, define $M_k$ to be the set of all symmetric admissible probability measures on $\Gamma$
 with finite $k$-th moment.
 For $\mu\in M_1$, we define a Besov space $(\mathcal{E}^{\partial\Gamma,\mu},B_2(\mu))$ associated to $\mu$ by
 \begin{align*}
 \mathcal{E}^{\partial\Gamma,\mu}(u,v)&:=\int\int_{\partial\Gamma\times \partial\Gamma}
 (u(\xi)-u(\eta))(v(\xi)-v(\eta)) \Theta^\mu(\xi,\eta) d\nu(\xi)d\nu(\eta),\\
 B_2(\mu)&:=\{u\in L^2(\partial\Gamma,\nu)\ ;\mathcal{E}^{\partial\Gamma,\mu}(u,u)<\infty\},
 \end{align*}
 where $\nu$ is the harmonic measure of the random walk driven by $\mu$, and
 $\Theta^\mu(\cdot,\cdot)$ is the {\it Na\"\i m kernel} associated to $\mu$.
 We prove that the Besov spaces associated  to different random walks are all isomorphic with each others and isomorphic to the Besov spaces in \cite{BP}. (See Proposition \ref{mumu} and Theorem \ref{iso}.)
 In the construction of the Besov spaces associated to random walks on $\Gamma$,
 the role of the $\ell_2$-cohomology of $\Gamma_{d}$ in \cite{BP} is played by the set of harmonic functions with
 a finite energy. 
 Moreover, the role of the quasi-isometry between $\Gamma_{d}$
 and $\Gamma_{d'}$ will be played by a stability result for bilinear forms of random walks on a group established in \cite{PSC}.
 (See Proposition \ref{psc} for the statement.)\\\par 
 
 {\bf Regular Dirichlet forms and conformal dimensions.}\ \ \ 
 The second purpose of this paper is to further investigate the probabilistic aspects of the Besov spaces introduced above
 by using the theory of {\it Dirichlet forms}. A Dirichlet form is a closed symmetric bilinear form on an $L^2$ space
 which satisfies a certain contraction property, called the {\it Markovian property}.
 In particular, for Dirichlet forms satisfying the regularity property (which roughly means that the domain
 of the form contains sufficiently many continuous functions, see Definition 2.6.), there is a well-known correspondence between
 regular Dirichlet forms and symmetric Markov processes. 
 We will explain basic facts on Dirichlet forms in Section 2.\par
 In Section \ref{subsec:regu}, we will prove that,  
 under the assumption that the {\it Ahlfors-regular conformal dimension} of $(\partial\Gamma,\rho_{\Gamma})$
 is strictly less than 2 (see Definition \ref{def:arcd}), the Besov spaces $(\mathcal{E}^{\partial\Gamma,d},B(d))$ and $(\mathcal{E}^{\partial\Gamma,\mu},B_2(\mu))$
 are regular Dirichlet forms for any $d\in J_{AR}(\partial\Gamma)$
 and $\mu\in M_2$. This result will allow us to construct 
 symmetric Markov processes associated to them. 
 \begin{customthm}{\ref{reg}}
Assume the Ahlfors-regular conformal dimension of $\partial\Gamma$ is strictly less than 2.
Then for any $d\in J_{AR}(\partial\Gamma)$ and any 
 $\mu\in M_2$,
$(\mathcal{E}^{\partial\Gamma,d},B_2(d))$ and $(\mathcal{E}^{\partial\Gamma,\mu},B_2(\mu))$ are regular Dirichlet forms
 on $L^2(\partial\Gamma,{\cal H}_{d})$ and on $L^2(\partial\Gamma,\nu)$ respectively. 
\end{customthm} 
Our assumption on the Ahlfors-regular conformal dimension is optimal in the following sense: according to Theorem 0.3 in \cite{BP}, if a compact metric space $(Z,\rho)$ has the Ahlfors-regular conformal dimension equal to or greater than 2 and satisfies the Loewner property (see \cite[Chapter 8]{Hei}), then for any $d\in J_{AR}(Z,\rho)$, $B_2(d)$ only contains constant functions.\par
 Here is a brief sketch of the argument leading to Theorem \ref{reg}. Under our assumption on the Ahlfors-regular conformal dimension,
 there exists a metric $d_0$ belonging to $J_{AR}(\partial\Gamma)$
 such that $\dim(\partial\Gamma,d_{0})<2$, where $\dim$ is the Hausdorff dimension. It is easy to see that $(\mathcal{E}^{\partial\Gamma,d},B_2(d_0))$ is regular. \par
 Then, after a careful look at the isomorphism  between different Besov spaces, we can deduce that 
 all Lipschitz functions with respect to $d_0$ belong to all the Besov spaces of the form $(\mathcal{E}^{\partial\Gamma,d},B_2(d))$ and $(\mathcal{E}^{\partial\Gamma,\mu},B_2(\mu))$ as in the theorem.  
 We thus obtain the regularity of all these Besov spaces.\\\par 
 
  Examples of hyperbolic groups with Ahlfors-regular conformal dimension less than 2 are free groups, cocompact Fuchsian groups and carpet groups.
 In particular, it is proved in \cite{Ha1} that for any non-elementary hyperbolic group $G$ with planar boundary non-homeomorphic to the full sphere,
 the Ahlfors-regular conformal dimension of the boundary is strictly less than 2 if and only if
 $G$ is virtually isomorphic to a convex-cocompact Kleinian group. We refer to \cite{Ha1} and its references for other results on hyperbolic groups with planar boundaries.\\\par

 As a consequence of Theorem 5.1, by the general correspondence between regular Dirichlet forms and  Markov processes, 
 we conclude that each of the Besov spaces gives rise to a Markov process on $\partial \Gamma$. 
 The Besov space $(\mathcal{E}^{\partial\Gamma,d},B_2(d))$ corresponds to a strong Markov process (Hunt process) whose 
 reference measure is the Hausdorff measure $\mathcal{H}_{d}$ and whose jumping kernel is 
 $d(\cdot,\cdot)^{-2q}$,
 and the Besov space $(\mathcal{E}^{\partial\Gamma,\mu},B_2(\mu))$ corresponds to a strong Markov process whose 
 reference measure is the harmonic measure and whose jumping kernel is the Na\"\i m kernel.  \\\par 
 
 {\bf Reflected random walks.}\ \ \ 

 At the end of this paper, in Part \ref{reflecting-rw}, we will give a further probabilistic interpretation, now at the level of processes themselves. 
 More precisely, we show that 
 {\it there exists a Markov process $(W_t)$ with state space $\Gamma\cup\partial\Gamma$  that satisfies the following properties:\par
 (i) almost all trajectories of $(W_t)$ hit the boundary $\partial\Gamma$ in finite time;\par
 (ii) until the hitting time of $\partial\Gamma$, the trajectories of $(W_t)$ behave in distribution like a time change of the random walk with driving measure $\mu$ (See Proposition \ref{tau}.);\par 
 (iii) the process $(W_t)$ has a trace on the boundary $\partial\Gamma$ whose Dirichlet form is given by  $(\mathcal{E}^{\partial\Gamma,\mu},B_2(\mu))$ (See Theorem \ref{trace}.). }\par 
 The trace process is defined as the time-change of $(W_t)$ using the positive continuous additive functional whose Revuz measure (see Section 7 for the definition) is the harmonic measure $\nu$. See Section 7.1 for definitions and detail. \\\par
 
 The proofs of these claims use the notions of {\it smooth measures} (to be discussed in the next paragraph).  Roughly speaking, 
 in our context, 
 it consists in speeding up the original random walk so that it now hits the boundary in finite time and then prolongating  
 its life in such a way that the resulting process still behaves as the initial random walk when it is in $\Gamma$. 
 
 Under the assumption of the Ahlfors-regular conformal dimension being less than 2, we show that the construction 
 of such a reflecting random walk is possible with state space $\Gamma\cup\partial\Gamma$. 
 Properties (i), (ii) and (iii) follow from  general results from the theory of Dirichlet forms. Observe however that the regularity stated in Theorem \ref{reg}  and its close companion Theorem \ref{w}  are a crucial step to use Dirichlet form theory.\\\par

 {\bf Potential theoretic properties of harmonic measures.}\ \ \ 
 Harmonic measures of random walks on a non-elementary hyperbolic groups have been extensively studied
 and it is known that their behavior strongly depends on moment assumptions of the driving
 measures of the random walk. For instance, when the driving measure is finitely supported, 
 it is shown in \cite{BHM} that the associated Green metric on $\Gamma$ is hyperbolic and
 the corresponding harmonic measure belongs to the {\it Patterson-Sullivan class} (See \cite{Coo} and \cite{Ha2}.) 
 determined by the Green metric; the hyperbolicity of the Green metric is equivalent to {\it Ancona's inequality},
 which roughly means that the Green function is submultiplicative along geodesics. 
 
 Properties of harmonic measures are not so well understood when we only assume a weaker moment condition. 
 For instance it is shown in \cite{Gou} that for any non-elementary hyperbolic group $\Gamma$,
 there exists a symmetric probability measure on $\Gamma$ with some finite exponential moment for which 
 Ancona's inequality fails.
 Therefore, we cannot conclude in general that a harmonic measure belongs to the Patterson-Sullivan class 
 determined by the Green metric.  
 We mention here that results in \cite{T} imply that one can still compute the Hausdorff dimension of
 a harmonic measure on $\partial\Gamma$ as long as the driving measure has a finite first moment. \par
 
 When we are given a regular Dirichlet form and the corresponding Markov processes,
 we have potential theoretic objects associated to it
 such as capacities. Measures which do not charge sets of zero capacity are said to be {\it smooth}, and
 there is another potential theoretic notion for measures, called {\it measures of finite energy integral},
 which is a stronger property than smoothness.
Both notions are related to time changes of symmetric Markov processes. (See Subsection 5.2 for details.) 

 After proving the regularity of the Besov spaces, we study the smoothness property of harmonic measures
 of random walks on $\Gamma$. 
 For $\mu\in M_2$, we will introduce the set of all smooth measures (resp. the set of all measures of finite energy integral)
 with respect to the regular Dirichlet form
 $(\mathcal{E}^{\partial\Gamma,\mu},B_2(\mu))$; we  denote it with ${\cal S}(\partial\Gamma,\mu)$.
 (resp. ${\cal S}_0(\partial\Gamma,\mu)$)
 Similarly, for a metric $d$ in $J_{AR}(\partial\Gamma)$, we will define ${\cal S}(\partial\Gamma,d)$ 
 (${\cal S}_0(\partial\Gamma,d)$, resp.)
 to be the set of all smooth measures (resp. the set of all measures of finite energy integral)
 with respect to $(\mathcal{E}^{\partial\Gamma,d},B_2(d))$.
 (See Definition \ref{SS}.) In Theorem \ref{s}, we will prove that for any $d\in J_{AR}(\partial\Gamma)$ and
 $\mu\in M_2$ we have
 \begin{align}\label{coin}
 {\cal S}(\partial\Gamma,d)={\cal S}(\partial\Gamma,\mu),\ {\rm and}\ \  
 {\cal S}_0(\partial\Gamma,d)={\cal S}_0(\partial\Gamma,\mu).
 \end{align}
 We will denote the common set by ${\cal S}(\partial\Gamma)$ and by ${\cal S}_0(\partial\Gamma)$ respectively.
 We finally use (\ref{coin}) to study harmonic measures under very weak moment condition.
 \begin{customthm}{\ref{1st}}
 Assume the Ahlfors-regular conformal dimension of $\partial\Gamma$ is strictly less than 2.
Then, both of ${\cal S}(\partial \Gamma)$ and ${\cal S}_0(\partial\Gamma)$ contain
 the harmonic measure $\nu$ of any random walk driven by $\mu$ in $M_1$.
  \end{customthm}
 When $\mu\in M_2$, the above claim can be relatively easily deduced  from 
 Poincar\'e-type inequalities on $\partial \Gamma$ as in Proposition \ref{pi}. (See Theorem \ref{s}.)
 To relax the moment condition to a finite first moment, in Section 9 we combine heat kernel estimates
 for jump processes from \cite{GHH} (See also \cite{CK,CKW}.)  and deviation inequalities from \cite{MS}.\\\par

 The paper is organized as follows. In Section \ref{pre-df}, we explain definitions and basic facts about Dirichlet forms. 
 We also introduce several examples of Dirichlet forms and the corresponding Markov processes. 
 Section \ref{besov-bp} is devoted to the explanation of the paper \cite{BP} including their construction
 of Besov spaces.
 In Section \ref{besov-hg}, we first introduce Besov spaces associated to random walks on a non-elementary
 hyperbolic group $\Gamma$. We then prove that Besov spaces introduced here and in \cite{BP} 
 are all isomorphic. Section \ref{besov-rw} starts with the proof of Theorem \ref{reg}. Then we further develop 
  the potential theory of Dirichlet forms and prove the smoothness property of harmonic measures.  
  In Section \ref{time-change}, we first introduce several general facts about time-change techniques 
 in the theory of Dirichlet forms. 
 In Section \ref{reflecting-rw}, we provide an interpretation of the Markov processes which correspond to the Besov spaces
  associated to random walks  using reflecting random walks. 
  In Section 9, we pursue further the potential theoretic aspect of harmonic measures using heat kernel estimates and deviation inequalities 
  and we prove Theorem \ref{1st}.

\section{Preliminary facts on Dirichlet forms}\label{pre-df}
 In this section, we briefly explain several basic facts about Dirichlet forms including their connection to probability theory. 
 See \cite{FOT,CF} for details of the theory of Dirichlet forms and their probabilistic aspects,
 especially the theory of symmetric Markov processes. 
 A Dirichlet form is a closed symmetric bilinear form on an $L^2$-space which satisfies a kind of contraction property,
 called the {\it Markovian} property. It is a general fact in functional analysis that there is a one to one correspondence
 between the collection of closed symmetric bilinear form defined on a Hilbert space ${\tt Hil}$ and the collection of
 non-positive definite self-adjoint operators on ${\tt Hil}$.
 Thus, we can associate a strongly continuous semigroup to a given closed symmetric form.
 In particular, when a given closed symmetric form satisfies the Markovian property, then the corresponding semigroup
 also has a kind of positivity preserving property, which is also called the {\it Markovian} property. 
 We will explain below what we quickly sketched out in detail.\\
 
 Let $E$ be a locally compact Hausdorff space, and $m$ be a positive Radon measure on $E$
 with full support.
 \begin{defn}
 \begin{description}
 \item[(1)] 
 We say that a bilinear form $(\ce,\cf)$ on a real Hilbert space
${\tt Hil}$ is a symmetric closed form if the following conditions are satisfied:
 \begin{itemize}
 \item$\cf$ is a dense linear subspace of ${\tt Hil}$, and $\ce:\cf\times\cf\to\mathbb{R}$ is 
 non-negative definite, symmetric and bilinear,
 \item for any $\alpha>0$, $(\cf,(\ce_{\alpha})^{1/2})$ is a Hilbert space, where 
 \begin{align*}
 \ce_{\alpha}(u,v):=\ce(u,v)+\alpha(u,v)_{H},\ \ u,v\in\cf.
 \end{align*} 
 \end{itemize}
 \item[(2)] A bilinear form $(\ce,\cf)$ is called a Dirichlet form on $L^2(E,m)$ if 
 $(\ce,\cf)$ is a closed symmetric form on $L^2(E,m)$,
 and for any $u\in\cf$, we have that 
 $v:=(0\vee u)\wedge1\in\cf$
 and $\ce(v,v)\leq\ce(u,u)$.
 The latter condition is called the Markovian property.
 \item[(3)] A linear operator $U:L^2(E,m)\to L^2(E,m)$ is said to be {\it Markovian}
 if for any $v\in L^2(E,m)$ with $0\leq v\leq1$ $m\mathchar`-a.e.$, we have that $0\leq Uv\leq1$
 $m\mathchar`-a.e.$
 \end{description}
\end{defn}
 \begin{theo}\cite[Theorem 1.3.1, Lemma 1.3.2, Theorem 1.4.1]{FOT}
 \begin{description}
 \item[(1)] There is a one to one correspondence between the collection of closed symmetric forms 
 $(\ce,\cf)$ on a real Hilbert space
${\tt Hil}$ and the collection of non-positive definite self-adjoint operators $A$ on ${\tt Hil}$.
 This correspondence is characterized by
 \begin{align}\label{generator}
 \begin{cases}
 {\rm Dom}(A)\subset\cf,\\
 \ce(u,v)=(-Au,v)_{H},\ u\in{\rm Dom}(A),v\in\cf.
 \end{cases}
 \end{align}
 \item[(2)] Let $A$ be a non-positive definite self-adjoint operator on ${\tt Hil}$.
 Then, $(T_t)_{t>0}:=(\exp(tA))_{t>0}$ is a strongly continuous semigroup on ${\tt Hil}$, and the generator of $(T_t)_{t>0}$
 coincides with $A$. Moreover, there is a unique strongly continuous semigroup whose generator is $A$. 
 \item[(3)] Let $(\ce,\cf)$ be a closed symmetric form on ${\tt Hil}$ and $(T_t)_{t>0}$
 be the corresponding strongly continuous semigroup on ${\tt Hil}$. Then, 
 $(\ce,\cf)$ is a Dirichlet form if and only if $T_t$ is Markovian for any $t>0$.
 \end{description}
\end{theo}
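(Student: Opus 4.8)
\medskip\noindent
The plan is to establish the three assertions in turn along the standard functional-analytic route; only the equivalence in part~(3) requires real work. For part~(1), starting from a closed symmetric (hence non-negative) form $(\ce,\cf)$, I would work inside the Hilbert space $(\cf,\ce_1)$, whose completeness is precisely the closedness hypothesis. For fixed $f\in H$ the functional $w\mapsto(f,w)_H$ is $\ce_1$-bounded on $\cf$, since $\|w\|_H\le\ce_1(w,w)^{1/2}$, so Riesz representation produces $Gf\in\cf$ with $\ce_1(Gf,w)=(f,w)_H$ for every $w\in\cf$. One checks that $G\colon H\to H$ is bounded, symmetric, non-negative and injective (injectivity uses density of $\cf$ in $H$), so $G^{-1}$, with domain $\mathrm{Ran}(G)$, is self-adjoint; setting $\mathrm{Dom}(A):=\mathrm{Ran}(G)$ and $A:=I-G^{-1}$ gives a non-positive self-adjoint operator, and $\ce_1(u,w)=(G^{-1}u,w)_H$ rewrites as $\ce(u,w)=(-Au,w)_H$, which is (\ref{generator}); density of $\mathrm{Dom}(A)$ follows from $\overline{\mathrm{Ran}(G)}=\ker(G)^{\perp}=H$. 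Conversely, given a non-positive self-adjoint $A$, the form $\ce(u,v):=((-A)^{1/2}u,(-A)^{1/2}v)_H$ with $\cf:=\mathrm{Dom}((-A)^{1/2})$ is symmetric, non-negative and closed (its $\ce_1$-norm is the graph norm of the closed operator $(-A)^{1/2}$), and one checks the two constructions are mutually inverse, while (\ref{generator}) determines $A$ uniquely because $\cf$ is dense.

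For part~(2), I would invoke the spectral theorem: write $A=\int_{(-\infty,0]}\lambda\,dE_\lambda$ and $T_t:=\exp(tA)=\int_{(-\infty,0]}e^{t\lambda}\,dE_\lambda$. As $0<e^{t\lambda}\le1$ for $\lambda\le0$, each $T_t$ is a self-adjoint contraction, the semigroup law $T_sT_t=T_{s+t}$ is immediate from the functional calculus, and both strong continuity ($T_tf\to f$ as $t\downarrow0$) and the generator identity ($t^{-1}(T_tf-f)\to Af$, the set of such $f$ being exactly $\mathrm{Dom}(A)$) follow by dominated convergence against the finite measure $d(E_\lambda f,f)$. Uniqueness of a strongly continuous semigroup with a prescribed generator is the usual resolvent argument: $G_\alpha:=(\alpha-A)^{-1}=\int_0^\infty e^{-\alpha t}T_t\,dt$, so $A$ fixes all $G_\alpha$, $\alpha>0$, hence fixes $(T_t)_{t>0}$ by Laplace inversion.

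Part~(3) is the heart of the matter, and the device I would use is the family of approximating forms $\ce^{(\beta)}(u,v):=\beta(u-\beta G_\beta u,v)_H=(\beta(-A)(\beta-A)^{-1}u,v)_H$ for $\beta>0$. By the functional calculus $\beta(-\lambda)/(\beta-\lambda)\uparrow(-\lambda)$ as $\beta\to\infty$ for every $\lambda\le0$, so $\ce^{(\beta)}(u,u)\uparrow\|(-A)^{1/2}u\|_H^2$, which equals $\ce(u,u)$ when $u\in\cf$ and is finite only then; this monotone approximation is the bridge between the form and the semigroup/resolvents. For ``$T_t$ Markovian $\Rightarrow$ $\ce$ Markovian'', note $\beta G_\beta=\int_0^\infty\beta e^{-\beta t}T_t\,dt$ is sub-Markovian as an average of such, and being bounded, symmetric and positivity-preserving it is given by a symmetric sub-probability kernel $\sigma_\beta(x,dy)$, which yields the Beurling--Deny-type identity
\[
\ce^{(\beta)}(u,u)=\beta\!\int_E u(x)^2\bigl(1-\sigma_\beta(x,E)\bigr)\,m(dx)+\tfrac{\beta}{2}\!\int_E\!\!\int_E\bigl(u(x)-u(y)\bigr)^2\,\sigma_\beta(x,dy)\,m(dx);
\]
from this, $\ce^{(\beta)}(v,v)\le\ce^{(\beta)}(u,u)$ for any normal contraction $v$ of $u$ is manifest, and letting $\beta\to\infty$ gives $v:=(0\vee u)\wedge1\in\cf$ with $\ce(v,v)\le\ce(u,u)$. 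For the converse it suffices to show each $\beta G_\beta$ is sub-Markovian: given $0\le f\le1$, put $u:=\beta G_\beta f\in\cf$ and $v:=(0\vee u)\wedge1\in\cf$; using $\ce_\beta(u,w)=\beta(f,w)_H$ for $w\in\cf$, the symmetry of $\ce_\beta$, and $\ce(v,v)\le\ce(u,u)$, one finds $\ce_\beta(u-v,u-v)\le\beta\bigl(u-v,(f-u)+(f-v)\bigr)_H$, whose integrand is $\le0$ on each of $\{u>1\}$, $\{u<0\}$ and $\{0\le u\le1\}$, forcing $u=v$; Laplace inversion then carries sub-Markovianity back to $(T_t)_{t>0}$.

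I expect the main obstacle to be precisely this last part: one must make the two-way passage between the unit-contraction (Markovian) property of the quadratic form and the sub-Markovian property of the resolvent operators completely rigorous, and in particular justify the symmetric sub-probability kernel representation of the abstract bounded, symmetric, positivity-preserving operator $\beta G_\beta$ on $L^2(E,m)$---the monotone approximation $\ce^{(\beta)}\uparrow\ce$ is the tool that lets both directions close. By contrast, parts (1) and (2) are routine applications of the Riesz representation theorem and the spectral theorem.
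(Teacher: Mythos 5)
This statement is quoted in the paper as background with a citation to [FOT]; the paper contains no proof of its own, so there is nothing internal to compare against. Your sketch is correct and is essentially the standard argument from that reference itself — Riesz representation in $(\cf,\ce_1)$ and the resolvent $G$ for part (1), the spectral theorem for part (2), and the approximating forms $\ce^{(\beta)}(u,v)=\beta(u-\beta G_\beta u,v)_H$ together with the Beurling--Deny-type identity and the $\ce_\beta(u-v,u-v)\le\beta(u-v,(f-u)+(f-v))_H$ trick for part (3) — with the one genuinely delicate point (the sub-probability kernel representation of the bounded symmetric positivity-preserving operator $\beta G_\beta$, which uses the topological assumptions on $E$) correctly identified by you as the step requiring care.
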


We next define an {\it extended Dirichlet space}, which will be used in what follows.
\begin{defn}\label{def:ext-dirichlet}
Let $(\ce,\cf)$ be a Dirichlet form on $L^2(E,m)$.
 We denote by $\cf_e$ the set of all $m$-measurable functions $u$ with the following properties:
 \begin{itemize}
 \item$|u|<\infty$ $m$-a.e. and
  \item there exists an approximating 
 sequence $(u_n)\subset\cf$ such that
 $\lim_{n\to\infty}u_n=u$ $m$-a.e. and $(u_n)$ is an $\ce$-Cauchy sequence, namely, for any $\varepsilon>0$, there exists an integer $N$ such that
 $\ce(u_n-u_m,u_n-u_m)<\varepsilon$ for any $n,m\geq N$. 
 \end{itemize}
 By the second property, it is obvious that for any $u\in\cf_e$ and its approximating sequence $(u_n)$,
 the limit $\ce(u,u):=\lim_{n\to\infty}\ce(u_n,u_n)$ exists and does not depend on the choice of the approximating sequence
 of $u$. We will call $(\cf_e,\ce)$ the extended Dirichlet space of $(\ce,\cf)$.  
\end{defn}
The following theorem shows that the extended Dirichlet space $(\cf_e,\ce)$ characterizes
the original Dirichlet form $(\ce,\cf)$.
\begin{theo}\cite[Theorem 1.5.2]{FOT}\label{ex-dr}
Let $(\ce,\cf)$ and $(\cf_e,\ce)$ be a Dirichlet form on $L^2(E,m)$ and its extended Dirichlet space,
respectively. Then we have that $\cf=\cf_e\cap L^2(E,m)$.
\end{theo}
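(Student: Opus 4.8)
\medskip\noindent\textbf{A proof plan.}\ The inclusion $\cf\subseteq\cf_e\cap L^2(E,m)$ is immediate: any $u\in\cf$ lies in $L^2(E,m)$ and is its own approximating sequence. For the converse, fix $u\in\cf_e\cap L^2(E,m)$ together with an approximating sequence $(u_n)\subset\cf$, so that $u_n\to u$ $m$-a.e.\ and $(u_n)$ is $\ce$-Cauchy; in particular $M:=\sup_n\ce(u_n,u_n)<\infty$. The plan is to recognise $u$ as an element of $\cf$ by means of the resolvent $(G_\alpha)_{\alpha>0}=\bigl((\alpha-A)^{-1}\bigr)_{\alpha>0}$ attached to $(\ce,\cf)$ through the correspondence recalled above. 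For $v,w\in L^2(E,m)$ set
\begin{align*}
\ce^{(\alpha)}(v,w):=\alpha\bigl(v-\alpha G_\alpha v,\,w\bigr)_{L^2(E,m)}.
\end{align*}
By the standard properties of these approximating forms (\cite[Section~1.3]{FOT}), each $\ce^{(\alpha)}$ is a nonnegative symmetric bilinear form on $L^2(E,m)$, $\alpha\mapsto\ce^{(\alpha)}(v,v)$ is nondecreasing, $\ce^{(\alpha)}(v,v)\le\ce(v,v)$ for $v\in\cf$, and a function $v\in L^2(E,m)$ belongs to $\cf$ if and only if $\sup_{\alpha>0}\ce^{(\alpha)}(v,v)<\infty$, in which case $\ce(v,v)=\lim_{\alpha\to\infty}\ce^{(\alpha)}(v,v)$. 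It therefore suffices to prove
\begin{align}\label{keybound}
\ce^{(\alpha)}(u,u)\le M\qquad\text{for every }\alpha>0;
\end{align}
this yields $u\in\cf$ (indeed $\ce(u,u)\le M$), as wanted.

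To approach \eqref{keybound}, fix $\alpha>0$ and write $P_\alpha:=\alpha G_\alpha$, a bounded, self-adjoint, positivity preserving operator on $L^2(E,m)$ with $0\le P_\alpha\le\mathrm{Id}$. Then $\ce^{(\alpha)}(v,v)=\alpha\,\bigl\|(\mathrm{Id}-P_\alpha)^{1/2}v\bigr\|_{L^2(E,m)}^2$, so $v\mapsto\ce^{(\alpha)}(v,v)^{1/2}$ is a seminorm on $L^2(E,m)$ that is lower semicontinuous for weak convergence. Consequently, if one knows that $u_n\rightharpoonup u$ weakly in $L^2(E,m)$, then
\begin{align*}
\ce^{(\alpha)}(u,u)\le\liminf_{n\to\infty}\ce^{(\alpha)}(u_n,u_n)\le\liminf_{n\to\infty}\ce(u_n,u_n)\le M,
\end{align*}
which is \eqref{keybound}. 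Moreover an $L^2$-bounded approximating sequence of $u$ (which then automatically converges to $u$ weakly in $L^2$) can always be produced from $u$ itself: e.g.\ $v_k:=\alpha_k G_{\alpha_k}u$ with $\alpha_k\to\infty$ satisfies $v_k\in\cf$, $\|v_k\|_{L^2(E,m)}\le\|u\|_{L^2(E,m)}$, $v_k\to u$ in $L^2(E,m)$, and $\ce(v_k,v_k)\le\ce^{(\alpha_k)}(u,u)$; alternatively one may truncate, $(u_n\wedge k)\vee(-k)$, whose $\ce$-energy is still $\le M$ by the Markovian property, which already settles the case $m(E)<\infty$.

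The delicate point — and the main obstacle — is precisely the verification of \eqref{keybound} from a sequence that converges only $m$-a.e.\ and need not be bounded in $L^2(E,m)$: on $\mathbb Z$ with the discrete Dirichlet form, adding to a fixed $u\in\cf$ the functions $n^{-1}\mathbf 1_{[-n^4,n^4]}$ gives an $\ce$-Cauchy sequence converging $m$-a.e.\ to $u$ but unbounded in $L^2$, so one cannot simply pass to a weak $L^2$-limit inside the given sequence. The route I would take to close this gap is to exploit that $\ce^{(\alpha)}$ is the Dirichlet form of the bounded generator $\alpha^2 G_\alpha-\alpha$, hence has no diffusion part, so that $\ce^{(\alpha)}(v,v)$ is expressed as the integral of $(v(x)-v(y))^2$ against a nonnegative symmetric jump measure plus the integral of $v(x)^2$ against a nonnegative killing measure; Fatou's lemma applied to this representation, using only $u_n\to u$ $m$-a.e., gives $\ce^{(\alpha)}(u,u)\le\liminf_n\ce^{(\alpha)}(u_n,u_n)\le M$ with no boundedness hypothesis, and the proof concludes. (If one prefers to stay purely Hilbert-space theoretic, the same conclusion is reached through the functional-analytic machinery of \cite[Section~1.5]{FOT}.)
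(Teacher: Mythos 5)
The paper offers no proof of this statement at all---it is quoted verbatim from \cite[Theorem 1.5.2]{FOT}---and your argument is correct and is essentially the proof given there: one bounds the approximating forms $\ce^{(\alpha)}(u,u)$ by $\sup_n\ce(u_n,u_n)$ via Fatou's lemma applied to the jump-plus-killing representation of $\ce^{(\alpha)}$ (formula (1.4.8) in \cite{FOT}, valid because $\alpha G_\alpha$ is a bounded symmetric positivity-preserving sub-Markovian operator, so its representing kernel charges no $m$-null set in either coordinate), and then invokes the characterization $u\in\cf\Leftrightarrow\sup_{\alpha>0}\ce^{(\alpha)}(u,u)<\infty$. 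One minor caveat: the aside asserting that $v_k:=\alpha_k G_{\alpha_k}u$ is an approximating sequence for $u$ is circular as stated (its $\ce$-Cauchyness would require the very bound $\sup_k\ce^{(\alpha_k)}(u,u)<\infty$ you are trying to prove), but since that remark is not used in the final Fatou argument, the proof stands.
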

 We next explain the probabilistic aspect of the theory of Dirichlet forms, especially the connection to the theory of
 symmetric Markov processes. 
 Let $(X_t)$ be a Hunt process on $(E,m)$. We recall that Hunt processes are strong Markov processes 
 with certain regularity properties of the sample paths. 
 Then, the linear operator $T_t:L^2(E,m)\to L^2(E,m)$ given by 
 \begin{align}\label{semi-gr}
 T_tf(\bullet):=E_{\bullet}[f(X_t)]
 \end{align}
 defines a strongly continuous semigroup on $L^2(E,m)$. We say that $(X_t)$ is $m$-{\it symmetric}
 when $T_t$ is a symmetric operator on $L^2(E,m)$.
 Thus, it is a natural question to ask whether for a given Dirichlet form $(\ce,\cf)$,
  there exists a Hunt process $(X_t)$  such that the semigroup corresponding to $(\ce,\cf)$ coincides with that induced by
 the Markov process $(X_t)$.
 It is a well-known fact in the theory of Dirichlet forms that when the Dirichlet form $(\ce,\cf)$ on $L^2(E,m)$ is
 {\it regular}, which roughly means that the domain $\cf$ contains sufficiently many functions in
 $$C_0(E):=\{f:E\to\mathbb{R}\ ;\ f\ {\rm is\ a\ continuous\ function\ with\ compact\ support}\},$$
 then we have a correspondence between regular Dirichlet forms on $L^2(E,m)$ and
 $m$-symmetric Hunt processes on $E$. 
 \begin{remark}
We recommend interested readers
 to consult textbooks such as \cite{FOT,CF} for details including the precise definition of Hunt processes.
\end{remark}

\begin{defn}\label{def-reg}
A Dirichlet form $(\ce,\cf)$ on $L^2(E,m)$ is called regular if
$C_0(E)\cap\cf$ is dense both in $(C_0(E),\|\cdot\|_{\infty})$ and $(\cf,(\ce_1)^{1/2})$.
\end{defn}

\medskip
\noindent
{\bf FACT.} {\it There exists a correspondence, which is one to one in a certain sense, between
regular Dirichlet forms on $L^2(E,m)$ and $m$-symmetric Hunt processes on $E$. A Hunt process is a strong Markov process which has {\it cadlag} sample paths and certain additional properties.  
See \cite[Appendix A.2]{FOT} for the precise definition.\par
 For a given regular Dirichlet form $(\ce,\cf)$, let $A$ be the self-adjoint operator determined by \eqref{generator}, and $(T_t)$ be the semigroup whose generator is $A$. Then, the corresponding Hunt process $(X_t)$ satisfies \eqref{semi-gr}. 
 See \cite[Chapter 7]{FOT} and \cite[Theorem 1.5.1]{CF} for the precise statement. }
 \\
 
 In the rest of this section, we will give two examples of regular Dirichlet forms and explain their probabilistic interpretation.
 The latter example will play a very important role in what follows.
\begin{exa}
 Consider the standard Dirichlet energy $\frac{1}{2}\int_{\mathbb{R}^n}(\nabla f\cdot\nabla g)dx$
 on the Euclidean space $\mathbb{R}^n$, where $u,v\in W^{1,2}(\mathbb{R}^n):=\{f\in L^2(\mathbb{R}^n,dx)\ ;\ 
 \frac{\partial f}{\partial x_i}\in L^2(\mathbb{R}^n,dx),\ {\rm for}\ i=1,...,n\}$.
 Then it is well-known that $(\frac{1}{2}\int_{\mathbb{R}^n}(\nabla f\cdot\nabla g)dx,W^{1,2}(\mathbb{R}^n))$
 is a regular Dirichlet form on $L^2(\mathbb{R}^n,dx)$.
 Moreover, the relation (\ref{generator}) implies that
 the corresponding non-positive self-adjoint operator is given by
 $\frac{1}{2}\Delta=\frac{1}{2}\sum_{i=1}^n\frac{\partial^2}{\partial x_i^2}$.
 Thus, the corresponding symmetric Hunt process is a standard $n$-$dim$ Brownian motion on $\mathbb{R}^n$.
\end{exa}
\begin{exa}\label{csrw}
Let $\mathbb{V}$ be either a finite set or a countable set.\ 
 Let $c:\mathbb{V}\times\mathbb{V}\to\mathbb{R}_{\geq0}$ be a weight function which is symmetric $(${\it i.e.,} $c(x,y)=c(y,x))$. 
 We define a measure $m$ on $\mathbb{V}$ by $m(x):=\sum_{y\in\mathbb{V}}c(x,y)$, and assume that
 \begin{align}\label{assume-m}
 {\rm supp}(m)=\mathbb{V}\ \ {\rm and}\ \ \sup_{x\in\mathbb{V}}m(x)<\infty.
 \end{align}
 Now we define a bilinear form $(\ce,\cf)$ on $L^2(\mathbb{V},m)$ by 
  \begin{align}\label{srw}
 \ce(u,v)&:=\frac{1}{2}\sum_{x,y\in \mathbb{V}}c(x,y)(u(x)-u(y))(v(x)-v(y)), \\
 \cf&=L^2(\mathbb{V},m),\nonumber
  \end{align}
  Then, it is shown in \cite[Theorem 2.2.2]{CF} that $(\ce,\cf)$ is a regular Dirichlet form on $L^2(\mathbb{V},m)$,
 and the corresponding $m$-symmetric Hunt process is the continuous time random walk $(X_t)_{t\geq0}$
 which is defined as follows:
 define $p(x,y):=c(x,y)/m(x)$, and let $(R_n)_{n\in\mathbb{N}}$ be a discrete time Markov chain with transition probabilities
 $(p(x,y))_{x,y\in \mathbb{V}}$. Let $(N_t)_{t\geq0}$ be a Poisson process with intensity 1 that is independent of $(R_n)$.
 Then, the $m$-symmetric Markov process $(X_t)_{t\geq0}$ is given by $X_t:=Y_{N_t}$.
 This construction of $(X_t)$ is equivalent to the fact that $(X_t)$ has random holding times given by {\it i.i.d.} exponential
 distributions with mean $1$ at all vertices. For this reason, the process $(X_t)_{t\geq0}$ is often called the ``constant 
 speed random walk''. See \cite[Section 2.2.1]{CF} for detail.

\if0
Let $(\mathbb{V},\mathbb{B})$ 
  be an infinite, connected, locally finite graph
 with vertex set $\mathbb{V}$ 
  and unoriented edge set $\mathbb{B}$. 
 Let $c$ be a weight function $c:\mathbb{B}\to\mathbb{R}_{\geq0}$ 
  such that $c([x,y])>0$ if and only if $[x,y]\in$ $\mathbb{B}$ 
 and we define a measure $m$ on the vertex set $\mathbb{V}$ 
  by $m(x):=\sum_{y;[x,y]\in E}c([x,y])$ for $x\in$ $\mathbb{V}$. 
 Now we define a bilinear form $(\ce,\cf)$ on $L^2(\mathbb{V},m)$ 
  by
  \begin{eqnarray}\label{srw}
 \ce(u,v)&:=&\frac{1}{2}\sum_{x,y:[x,y]\in \mathbb{B}}c([x,y])(u(x)-u(y))(v(x)-v(y)),\\
 \cf&=&L^2(\mathbb{V},m)
 ,\nonumber
 \end{eqnarray}
Then, it is known that $(\ce,\cf)$ is a regular Dirichlet form on $L^2(\mathbb{V},m)$,
 and the corresponding $m$-symmetric Hunt process is the continuous time random walk $(X_t)_{t\geq0}$
 which is defined as follows:
 define $p(x,y):=c([x,y])/m(x)$, and let $(R_n)_{n\in\mathbb{N}}$ be a discrete time Markov chain with transition probabilities
 $(p(x,y))_{x,y\in V}$. Let $(N_t)_{t\geq0}$ be a Poisson process with intensity 1 that is independent of $(R_n)$.
 Then, the $m$-symmetric Markov process $(X_t)_{t\geq0}$ is given by $X_t:=Y_{N_t}$.
 This construction of $(X_t)$ is equivalent to the fact that $(X_t)$ has random holding times given by {\it i.i.d.} exponential
 distributions with mean $1$ at all vertices. For this reason, the process $(X_t)_{t\geq0}$ is often called the ``constant 
 speed random walk''. See \cite[Section 2.2.1]{CF} for detail.
 \fi
\end{exa}

\section{Besov spaces constructed by Bourdon and Pajot}\label{besov-bp}
In this section, we will give a summary of some results in \cite{BP},
in particular the construction of Besov spaces on a compact metric space.

\subsection{$\ell_p$-cohomology of simplicial complexes and  its invariance by \\ quasi-isometries}

We consider a simplicial complex $K$ equipped with a length metric, denoted by $|\cdot-\cdot|$, such that

\begin{itemize}
\item there exists a constant $C>0$ such that the diameter of all simplexes of $K$ are bounded by $C$, and
\item there exists a function $N:[0,\infty)\rightarrow\mathbb{N}$ such that all balls with radius $r$ contain
 at most $N(r)$ simplexes of $K$.
\end{itemize}
Simplicial complexes satisfying the 
above properties are called {\it geometric}. Now we define the $\ell_p$-cohomology of $K$.
We will say that $K$ is {\it uniformly contractible} if it is contractible and there exists a function
 $\phi:\mathbb{R}^{+}\rightarrow\mathbb{R}^{+}$ such that all balls $B_K(x,r)$ are contractible in $B_K(x,\phi(r))$. 
Let $K_i$ be the set of $i$-simplexes of $K$ and $\ell_pC^i(K)$ ($p\in[1,\infty]$) be the Banach space consisting
 of $\ell_p$-functions on $K_i$.  Define the coboundary operator $d_i:\ell_pC^i(K)\rightarrow\ell_pC^{i+1}(K)$
 by $(d_i\tau)(\sigma):=\tau(\partial\sigma)$, where $\tau\in\ell_pC^i(K)$ and $\sigma\in K_{i+1}$. 
 Note that if $K$ is a geometric simplicial complex, then $d_i$ is a bounded operator. The $i$-th $\ell_p$-cohomology group
 of $K$ is defined by 
\begin{equation*}
\ell_pH^i(K):=\ker{d_i}/\Ima d_{i-1}. 
\end{equation*}

The following theorem asserts the invariance of $\ell_pH^i(K)$ by quasi-isometries.
\begin{theo}\cite[Theorem 1.1]{BP}\label{inv}
Let $K$ and $K'$ be geometric uniformly contractible simplicial complexes. If $F:K\rightarrow K'$
 is a quasi-isometry, then it induces an isomorphism of topological vector spaces
 $N^{\bullet}:\ell_pH^{\bullet}(K')\rightarrow\ell_pH^{\bullet}(K)$.
\end{theo}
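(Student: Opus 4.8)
The plan is to realize the quasi-isometry $F$ by a bounded cochain map and to control every choice made along the way by bounded chain homotopies; boundedness everywhere will come from the ``geometric'' hypothesis (bounded diameters, bounded number of simplices per ball), while the possibility of filling cells on a uniform scale will come from ``uniformly contractible''. This is the scheme of the proof in \cite{BP}.

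\emph{Step 1: a controlled combinatorial model of $F$.} First I would replace $F$ by a map adapted to the simplicial structure. Choose $f^{(0)}\colon K_0\to K'_0$ on vertex sets with $\sup_{x\in K_0}|F(x)-f^{(0)}(x)|<\infty$; since $F$ is a quasi-isometry and the simplices of $K$ have diameter $\le C$, the $f^{(0)}$-images of the vertices of any simplex of $K$ lie in a ball of $K'$ of radius $R_0$ independent of the simplex. Extend $f^{(0)}$ over the skeleta of $K$ inductively: given $f$ on the $(i-1)$-skeleton with the image of each $i$-simplex's boundary contained in a ball of radius $R_{i-1}$, use uniform contractibility of $K'$ to extend $f$ over each $i$-simplex with image in a ball of radius $R_i:=\phi(R_{i-1})$. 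The bounded-geometry hypothesis on $K'$ lets one carry this out simplicially after a single subdivision of $K$, which does not destroy bounded geometry. The output is a simplicial map $f\colon K\to K'$ with two crucial uniformity properties: it is uniformly proper, and there is $M<\infty$ such that every simplex of $K'$ is the image of at most $M$ simplices of $K$ (``bounded multiplicity'').

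\emph{Step 2: the induced map on $\ell_p$-cohomology.} A simplicial map $f$ induces pullbacks $f^{\sharp}\colon\ell_pC^i(K')\to\ell_pC^i(K)$, $f^{\sharp}\tau=\tau\circ f_i$, where $f_i$ is the action of $f$ on $i$-simplices. By the bounded-multiplicity property of Step 1 together with the bounded geometry of $K$, each $f^{\sharp}$ is a bounded operator, with norm controlled by $M^{1/p}$, in particular uniformly in $p$ (so the case $p=\infty$ is included). Since $f$ is simplicial, $f^{\sharp}d_{i-1}=d_{i-1}f^{\sharp}$, so $f^{\sharp}$ descends to a continuous linear map $N^{\bullet}:=f^{*}\colon\ell_pH^{\bullet}(K')\to\ell_pH^{\bullet}(K)$.

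\emph{Step 3: well-definedness and invertibility.} Any two controlled realizations $f_0,f_1$ of $F$ produced as in Step 1 satisfy $\sup_x|f_0(x)-f_1(x)|<\infty$; running the same skeleton-by-skeleton extension on $K\times[0,1]$, using uniform contractibility of $K'$ once more, yields a controlled homotopy from $f_0$ to $f_1$, hence a bounded homotopy operator $h\colon\ell_pC^i(K')\to\ell_pC^{i-1}(K)$ with $f_0^{\sharp}-f_1^{\sharp}=d\,h+h\,d$. Thus $f_0^{*}=f_1^{*}$ on cohomology, so $N^{\bullet}$ depends only on $F$ and is unchanged under bounded perturbation. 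Applying this to $F$, to a quasi-inverse $G\colon K'\to K$, and to the composites $G\circ F$, $F\circ G$ (which are uniformly close to $\mathrm{id}_K$, $\mathrm{id}_{K'}$) gives $F^{*}\circ G^{*}=(G\circ F)^{*}=\mathrm{id}$ on $\ell_pH^{\bullet}(K)$ and $G^{*}\circ F^{*}=(F\circ G)^{*}=\mathrm{id}$ on $\ell_pH^{\bullet}(K')$. Hence $N^{\bullet}=F^{*}$ is a linear isomorphism with inverse $G^{*}$, and since both are continuous by Step 2, it is an isomorphism of topological vector spaces. The main obstacle is Step 1 together with its homotopy analogue in Step 3: extending $f$, and the homotopy, cell by cell so that the diameter of the image of each cell, and the multiplicity $M$, stay bounded uniformly over all cells and all skeleta. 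This is exactly where the two hypotheses combine --- ``uniformly contractible'' supplies one control function $\phi$ governing the filling at every cell, while ``geometric'' makes $d_i$ bounded and forces the multiplicity and the operator norms of $f^{\sharp}$ and $h$ to be finite and independent of $p$ --- the rest being the standard contravariant bookkeeping for cochain complexes.
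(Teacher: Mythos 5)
Note first that the paper does not prove this statement: it is imported verbatim from \cite{BP} (Theorem 1.1 there), and the text only sketches the degree-one construction of $N^{1}$ via maps $c_0:K_0\to C_0(K')$ and $c_1:K_1\to C_1(K')$ satisfying $\partial c_1(\sigma)=c_0(\partial\sigma)$, followed by dualization $N^*(\tau)(\sigma)=\tau(c_1(\sigma))$. Your proposal is the standard proof and matches this scheme in all essentials: controlled cell-by-cell filling using uniform contractibility, boundedness of the induced cochain maps from bounded geometry, and controlled chain homotopies to get independence of choices and invertibility via a quasi-inverse. The one point where you deviate, and where your argument is shaky, is Step 1's claim that $F$ can be realized by an honest \emph{simplicial} map $f:K\to K'$ ``after a single subdivision of $K$.'' The images $f^{(0)}(v_0),\dots,f^{(0)}(v_k)$ of the vertices of a simplex of $K$ lie in a ball of uniform radius but need not span a simplex of $K'$ (the target is not assumed to be a Rips-type complex), and making the contraction-supplied filling simplicial requires simplicial approximation with an amount of subdivision that is not obviously uniform. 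The construction quoted in the paper sidesteps this entirely: one builds a \emph{chain} map $c_i:K_i\to C_i(K')$, sending each simplex to a finite chain of uniformly bounded $\ell_1$-norm and uniformly bounded support near $F(\sigma)$, with $\partial c_i=c_{i-1}\partial$; this is exactly what uniform contractibility plus bounded geometry delivers, and your Steps 2 and 3 (boundedness of the dual map from bounded multiplicity, homotopy operators, $G^*\circ F^*=\mathrm{id}$) go through verbatim in that formulation. So the fix is to drop the simplicial map and keep the chain map; with that replacement your argument is the proof of \cite[Theorem 1.1]{BP}.
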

 We give a brief sketch of the construction of $N^1$, which will be used later.
 Let $C_i(K)$ be the vector space spanned by elements of $K_i$.
 First, define a map 
 \begin{align}\label{c0}
 c_0:K_0\rightarrow C_0(K')
 \end{align}
  by choosing an element of $K'_0$
 uniformly close to $F(x)$ for each $x\in K_0$.
 Next, define a map $c_1:K_1\rightarrow C_1(K')$, satisfying $\partial c_1(\sigma)=c_0(\partial\sigma)$
 for any $\sigma\in K_1$ in the following way: for an edge $a\in K_1$, denote its end points by $a_{+},a_{-}\in K_0$.
 Then we can find an element $c_1(a)$ of $C_1(K')$ with $\partial c_1(a)=c_0(a_{+})-c_0(a_{-})$.
 For $\tau\in\ell_pC^1(K')$, define a map $N^*(\tau):K_1\rightarrow\mathbb{R}$ by
 \begin{align}\label{n*}
 N^*(\tau)(\sigma):=\tau(c_1(\sigma)),\ \ \ \sigma\in K_1.
 \end{align}
 Then the isomorphism $N^1:\ell_pH^1(K')\rightarrow\ell_pH^1(K)$ is induced by the linear map \\ 
 $N^*:\ell_pC^1(K')\rightarrow\ell_pC^1(K)$. 

\subsection{The hyperbolic fillings by Bourdon and Pajot}
 In what follows, we always assume that a compact metric space $(Z,\rho)$ satisfies the following properties:
 \begin{description}
 \item[(i)] $(Z,\rho)$ is {\it uniformly perfect}, namely there exists a constant $C>1$ such that for any $\xi\in Z$ and
 any $0<r\leq{\rm diam}(Z,\rho)$, we have that
 $$B_{\rho}(\xi,r)\setminus B_{\rho}(\xi,r/C)\neq\emptyset,$$
 where $B_{\rho}(\xi,r):=\{\eta\in Z:\rho(\xi,\eta)<r\}$.
 \item[(ii)] $(Z,\rho)$ carries a {\it doubling measure}, namely there exists a Borel measure $\theta$ on $Z$ such that
 there exists a constant $C'>1$ such that for any $\xi\in Z$ and any $r>0$, we have that
 $$\theta(B_{\rho}(\xi,2r))\leq C'\theta(B_{\rho}(\xi,r)).$$
 \end{description}
  We now introduce several definitions which will be important later.
\begin{defn}\label{def:arcd}
\begin{description}
\item[(1)] A metric $d$ on $Z$ is called Ahlfors-regular if there exists constants $C,C'>0$ such that
for any $\xi\in Z$ and any $0<r<{\rm diam}(Z,d)$, we have that
 \begin{align*}
 Cr^q \leq{\cal H}_{d}(B_{d}(\xi,r))\leq C'r^q,
 \end{align*}
 where ${\cal H}_{d}$ is a Hausdorff measure of $d$, and $q=\dim(Z,d)$.
\item[(2)] Two metrics $d,d'$ on $Z$ are called quasi-symmetric if there exists an
increasing homeomorphism $\alpha:[0,\infty)\to[0,\infty)$ such that for any distinct triple $\xi,\eta,\omega$
of $Z$ we have that
\begin{align*}
\frac{d(\xi,\eta)}{d(\xi,\omega)}\leq\alpha\left(\frac{d'(\xi,\eta)}{d'(\xi,\omega)}\right).
\end{align*}
\item[(3)] We denote by $J(Z,\rho)$ $($called the {\it conformal gauge} of $(Z,\rho))$ the set of all metrics on $Z$ which are
 quasi-symmetric to $\rho$, and by $J_{AR}(Z,\rho)$ $($called the {\it Ahlfors-regular conformal gauge} of $(Z,\rho))$
 the set of all Ahlfors regular metrics in $J(Z,\rho)$. 
 \item[(4)] We define the {\it Ahlfors-regular conformal dimension} of $(Z,\rho)$ 
 as the infimum of the Hausdorff dimension of metrics $d$ in the Ahlfors-regular conformal gauge $J_{AR}(Z,\rho)$. Namely, it is defined by $\inf\{\dim(Z,d):\ d\in J_{AR}(Z,\rho)\}.$
\end{description}
\end{defn}
\medskip
\begin{defn}\label{df:visual} 
Let $\tilde{\Gamma}$ be a proper geodesic hyperbolic space in the sense of Gromov.
 A metric $d$ on the Gromov boundary $\partial\tilde{\Gamma}$ of $\tilde{\Gamma}$ is called a visual metric if the following condition holds:
 there exists a constant $C,C'>0$ and $a>0$ such that for any $\xi,\eta\in\partial\tilde{\Gamma}$ we have that
 $$Ce^{-a(\xi|\eta)_O}\leq d(\xi,\eta)\leq C'e^{-a(\xi|\eta)_O},$$
 where $(\xi|\eta)_O$ is the Gromov product extended to $\partial\tilde{\Gamma}$ with a fixed base point $O$. See \eqref{def:product-boundary}.
 Notice that the notion of visual metrics defines a class of metrics on $\partial\tilde{\Gamma}$ whose members are all quasi-symmetric.
See \cite{GH} for notions that appear here.
\end{defn}

For a uniformly perfect compact metric space $(Z,\rho)$ which carries a doubling measure, 
in \cite{BP}, it is shown that for any $d\in J(Z,\rho)$
 we can construct
 a geometric uniformly contractible simplicial complex $K_{d}$ 
 and a graph $\Gamma_d$ which is the 1-skeleton of $K_d$ with the following properties. 
 Precise definitions of $\Gamma_d$ and $K_d$ will be given later in this subsection.
\begin{theo}\cite[Proposition 2.1, Corollary 2.4.]{BP}\label{hf}

 Let $d\in J(Z,\rho)$. 
\begin{itemize}
 \item The graph $\Gamma_{d}$ is of bounded degree.
\item The graph $\Gamma_{d}$ 
 is hyperbolic in the sense of Gromov. 
 Moreover, the Gromov boundary of $\Gamma_d$ equipped with a visual metric is quasi-symmetric to $(Z,d)$.
\item For two metrics $d,d'\in J(Z,\rho)$, there exists a quasi-isometry
 $F:\Gamma_{d}\rightarrow \Gamma_{d'}$ which can be continuously 
  extended to the identity map on $Z$. 
\item Let $\tilde{\Gamma}$ be a proper and geodesic hyperbolic space in the sense of Gromov.
   Suppose that there exists a point
$O\in \tilde{\Gamma}$ and a constant $C\geq0$, such that all points in $\tilde{\Gamma}$
 are within distance $C$ from some geodesic ray starting at $O$.
 Assume that the Gromov boundary of $\tilde{\Gamma}$ is $Z$ and that $\rho$ is quasi-symmetric to a visual metric on $Z$ induced by the hyperbolic structure of $\tilde{\Gamma}$.
 Then for any $d\in J(Z,\rho)$, there exists a quasi-isometry $F:\Gamma_{d}\rightarrow \tilde{\Gamma}$ which can be continuously 
 extended to the identity map on $Z$.    
\end{itemize}
\end{theo}
It follows from Theorem \ref{inv} and the third claim of Theorem \ref{hf} 
  that $\ell_pH^{\bullet}(K_d)$ and \\ 
 $\ell_pH^{\bullet}(K_{d'})$ are isomorphic topological vector spaces.
 Hence these topological vector spaces can be considered to be an invariant with respect to quasi-symmetry,
 and we will denote it by $\ell_pH^{\bullet}(J(Z,\rho))$.\\

Here we explain the constructions of $K_{d}$ and $\Gamma_{d}$.
 Normalize the metric $d$ in such a way that diam$(Z,d)=1/2$.
 For each $l\geq0$, choose points $z^1_l,...,z^{k(l)}_l$ in $Z$ in such a way that for any
 $i,j\in\{1,...,k(l)\}$ with $i\neq j$, we have $d(z_l^i,z_l^j)\geq e^{-l}$,
 and for each $l\geq0$, the balls $B_l^i:=B_{d}(z_l^i,e^{-l})$, $1\leq i\leq k(l),$ cover $(Z,\rho)$.
 Denote by $S_l$ the cover $\{B_l^i;i\in\{1,..,k(l)\}\}$. Remark that $S_0$ must be the singleton $\{B_0^1\}$
 because of the normalization of the diameter. Now define $\Gamma_{d}$ as follows.
 The vertex set $V(\Gamma_{d})$ is
 the collection of balls $\{B_l^i;n\geq0,i\in\{1,...,k(l)\}\}$, and two distinct vertex $B,B'$ are connected by an edge if
 
 \begin{enumerate}  
 \item both $B$ and $B'$ belong to $S_l$ and $B\cap B'\neq\emptyset$, or if
 \item one of them belongs to $S_l$, the other belongs to $S_{l+1}$ and $B\cap B'\neq\emptyset$.
 \end{enumerate}
 We equip $\Gamma_{d}$ with a length metric, denoted by $|\cdot-\cdot|$,
 by identifying each edge with the Euclidean segment of length 1.
 Denote the vertex $B_0^1$ by $O$ and for each $x\in V(\Gamma_{d})$,
 let $B(x)$ be the set of all infinite geodesic rays
 starting at $O$ and passing through $x$. Now a simplicial complex $K_{d}$ is defined as follows: for $n\in\mathbb{N}$,
 the {\it $n$-th Rips complex of $\Gamma_{d}$} is a simplicial complex whose $k$-simplexes are sets of vertices 
 $\{x_1,...,x_{k+1}\}$ ($x_i\in V(\Gamma_{d})$, $1\leq i\leq k+1$) with $|x_i-x_j|\leq n$ for any $i,j\in\{1,...,k+1\}$.
 It is known that for $n$ large enough, the $n$-th Rips complex of $\Gamma_{d}$ is geometric and
 uniformly contractible, thus we let $K_{d}$ be
 the $n$-th Rips complex of $\Gamma_{d}$ for $n$ large enough. See \cite[Section 3]{BH} and \cite{BP}
  for the proof of these facts.\par
 Note that it is shown in \cite{BP} that for $d\in J(Z,\rho)$,
 \begin{equation}\label{coho}
 \ell_pH^{1}(J(Z,\rho))\simeq\{f:V(\Gamma_{d})\rightarrow\mathbb{R}\ 
 ;\ df\in\ell_p(E(\Gamma_{d}))\}/\ell_p(V(\Gamma_{d}))+\mathbb{R}.
 \end{equation}

 Relying on Theorem \ref{hf}, 
  in \cite{BP}, 
 the authors introduced the following Besov space associated to each metric in $J_{AR}(Z,\rho)$,
 and it is shown that
 the set of boundary values of elements in $\ell_pH^1(J(Z,\rho))$ coincides with the Besov space.
 \begin{defn}\label{def1}
 \begin{description}
 \item[(1)] Let $p\in[1,\infty)$. For a function $u:Z\rightarrow\mathbb{R}$ and a metric $d\in J_{AR}(Z,\rho)$
 of dimension $q$, define
 \begin{equation*}
 \|u\|_{p,d}:=\left(\int\int_{Z\times Z}\frac{|u(\xi)-u(\eta)|^p}{d(\xi,\eta)^{2q}}d\mathcal{H}_{d}(\xi)d\mathcal{H}_{d}(\eta)\right)^{1/p},
 \end{equation*}
 where $\mathcal{H}_{d}$ is the Hausdorff measure of $d$.
 Define $B_p(Z,d):=\{u:Z\rightarrow\mathbb{R};\ \|u\|_{p,d}<\infty\}$.
 We will call $(\|\cdot\|_{p,d},B_p(Z,d))$ a $p$-Besov space on $Z$ associated to $d$.
 Then $(B_p(Z,d)/\sim,\|\cdot\|_{p,d})$ is a Banach space,
 where $u\sim v$ means $u(\xi)-v(\xi)$ is a constant
 for $\mathcal{H}_{d}$-$a.e\ \xi$. 
 In the rest of the paper,
 we will write $B_p(Z,d)=B_p(d)$ when the choice of the space $Z$ is clear from the context.
 \item[(2)] In what follows, we will particularly focus on the $L^2$ case $(p=2)$, and thus employ the following 
 special notation: let $d\in J_{AR}(Z,\rho)$ be a metric of dimension $q$, and $u,v\in B_2(Z,d)$.
 Define
 \begin{align*}
 \mathcal{E}^{Z,d}(u,v):=\int\int_{Z\times Z}\frac{(u(\xi)-u(\eta))(v(\xi)-v(\eta))}{d(\xi,\eta)^{2q}}
 d\mathcal{H}_{d}(\xi)d\mathcal{H}_{d}(\eta).
 \end{align*} 
 \item[(3)] For $d\in J(Z,\rho)$ and $f:V(\Gamma_{d})
 \rightarrow\mathbb{R}$ with $df\in\ell_p(E(\Gamma_{d}))$, define 
 $f_{\infty}:Z\to\mathbb{R}$ (if it exists)
 \begin{equation*}
 f_{\infty}(\xi):=\lim_{n\to\infty}f(r(n)),\ \ \xi\in Z,
 \end{equation*}
 where $r$ is a geodesic ray of $\Gamma_{d}$
  starting at $O$ and converging to $\xi$. Note that if the limit exists, it does not 
 depend on the choice of $r$.
 \end{description}
 \end{defn}

 In this paper, a linear map $T:D_1\to D_2$ between two Banach spaces $(D_1,\|\cdot\|_1)$ and $(D_2,\|\cdot\|_2)$
 is said to be a {\it Banach isomorphism} if $T$ is a bijection and a linear operator such that for any $u\in D_1$, we have
 $$C^{-1}\|u\|_1\leq\|Tu\|_2\leq C\|u\|_1$$ 
 for some constant $C>1$ which is independent of $u$.
 \begin{theo}\cite[Theorem 0.1, Theorem 3.1, Theorem 3.4.]{BP}\label{besov}
 \begin{description}
\item[(1)] Let $d\in J(Z,\rho)$ and $f:V(\Gamma_{d})\to\mathbb{R}$ be a function with
$df\in\ell_p(E(\Gamma_{d}))$. Then, for $\mathcal{H}_{d}$-$a.e.\ \xi\in Z$, the limit $f_{\infty}(\xi)$ exists and $f_{\infty}\in L^p(Z,\mathcal{H}_{d})$.
\item[(2)] For $d\in J(Z,\rho)$, the linear maps
\begin{align*}
I^{d}:\{f:V(\Gamma_{d})\rightarrow\mathbb{R};\ 
df\in\ell_p(E(\Gamma_{d}))\}&\rightarrow L^p(Z,\mathcal{H}_{d})\\
  f &\mapsto f_{\infty}\\
\bar{I}^{d}:\{f:V(\Gamma_{d})\rightarrow\mathbb{R};\ 
df\in\ell_p(E(\Gamma_{d}))\}/\mathbb{R}&\rightarrow L^p(Z,\mathcal{H}_{d})/\mathbb{R}\\
 \lbrack f \rbrack&\mapsto f_{\infty}\ \ mod\ \mathbb{R}
\end{align*}
are continuous. Moreover, ${\rm Ker}(I^d)=\ell_p(V(\Gamma_{d}))\ {\it and}\ {\rm Ker}(\bar{I}^d)=\ell_p(V(\Gamma_{d}))+\mathbb{R}$.

\item[(3)] When $d\in J_{AR}(Z,\rho)$, the map $\bar{I}^{d}$ induces a Banach isomorphism $\tilde{I}^{d}$ between
 $\ell_pH^1(J(Z,\rho))$ and $B_p(Z,d)/\sim$. 
  \end{description}
 \end{theo}

\section{Besov spaces associated to random walks on hyperbolic groups}\label{besov-hg} 
 In the previous section, we explained how to construct Besov spaces associated to metrics in the Ahlfors-regular conformal
 gauge of a given compact metric space. 
 In this section, we will choose, as a compact metric space,
 the Gromov boundary $\partial\Gamma$ of a non-elementary word hyperbolic
 group $\Gamma$ equipped with a visual metric, and
 we will introduce Besov spaces on $\partial\Gamma$ associated to random walks driven by probability measures
 with a finite second moment. Those Besov spaces will be introduced as sets of boundary values
 of harmonic functions on $\Gamma$. Moreover, we will show that sets of continuous functions in those Besov spaces,
 which are associated either to metrics in the Ahlfors-regular conformal gauge or to random walks driven by
 probability measures with finite second moment,
 are {\it canonical}. Namely, sets of continuous functions in Besov spaces do not depend on the choice of metrics in the Ahlfors-regular conformal gauge
 nor on probability measures with finite second moment.
 In addition, we will prove that for any choice of two Besov spaces among them, there exists a Banach isomorphism
 which coincides with the identity on the set of continuous functions. 
 \\


{\bf Notation}

Let $\Gamma$ be a non-elementary word hyperbolic group. 
We denote the neutral element by ${\it id}$.
 We will denote by $|\cdot-\cdot|_{\Gamma}$ a left-invariant word metric with respect to a fixed finite symmetric generating set,
 and let $\rho_{\Gamma}$ be a visual metric on the Gromov boundary $\partial \Gamma$ 
 constructed from $|\cdot-\cdot|_{\Gamma}$. 
 In what follows, we will write $J_{AR}(\partial\Gamma):=J_{AR}(\partial\Gamma,\rho_{\Gamma})$.
 For $d\in J_{AR}(\partial\Gamma)$,
 let $\mathcal{H}_{d}$ be the Hausdorff measure and $q$ be its Hausdorff dimension. 
 Recall that $\mathcal{H}_{d}$ is $q$-Ahlfors regular, namely, there exists a constant $C>1$ such that
 \begin{align*}
 C^{-1}r^q \leq\mathcal{H}_{d}(B_{d}(\xi,r))\leq Cr^{q}
 \end{align*}
 for any $\xi\in\partial\Gamma$ and 
 any $0<r<{\rm diam}(\partial\Gamma,\rho_{\Gamma})$.
 In what follows, we will write $B_2(d)$ for the Besov space $B_2(\partial\Gamma,d)$
 on the Gromov boundary $\partial\Gamma$ defined in Definition \ref{def1}.
\\

Let $\mu$ be a probability measure on $\Gamma$.
 Recall that we always assume that $\mu$ is symmetric ({\it i.e,}\ $\mu(x)=\mu(x^{-1})$ for $x\in\Gamma$),
 and admissible, which means that the support of $\mu$ generates $\Gamma$. 
 For $k\in\mathbb{N}$, let $M_k$ be the set of symmetric admissible probability measures on $\Gamma$ 
 with finite $k$-th moment
 ({\it i.e.,} $\sum_{x\in\Gamma}|id-x|_{\Gamma}^k\mu(x)<\infty$).

We consider the random walk with driving measure $\mu$:
 let $R_n$ be the position of the walk at time $n$. We denote with $({\cal G}_n)$ the filtration generated by the sequence $(R_n)$. 
 Let $\bp_x^{\mu}$ be the law of the random walk $(R_n)$
 starting at $x\in\Gamma$. 
 For $x,y\in\Gamma$, we have $\bp_x^{\mu}(R_n=y)=\mu^{*n}(x^{-1}y)$, where
 $\mu^{*n}$ is the $n$-th fold convolution power of $\mu$. Let $\be_x^\mu$ be the corresponding expectation. 
 Define $\bp^{\mu}=\bp^{\mu}_{id}$ and $\be^\mu=\be^\mu_{id}$. We use the notation 
 $G^{\mu}(x):=\sum_{k\geq0}\mu^{*k}(x)$ for the Green function associated to $\mu$.

 It is shown in \cite{Ka} that when $\mu\in M_1$ and $x\in\Gamma$,
 the random walk $(R_n)$ $\bp^{\mu}_x$-almost surely
 converges to a random point $R_{\infty}\in\partial\Gamma$ in the topology of $\Gamma\cup\partial\Gamma$. 
 Denote by $\nu_x$ the distribution of $R_{\infty}$ under the law $\bp^{\mu}_x$. Then $\nu_x$ is a probability measure on $\partial\Gamma$; 
 it is called the {\it harmonic measure} of $(R_n)$ starting at $x$. Define $\nu:=\nu_{id}$. 
 Note that $$\int_{\partial \Gamma}  u(\xi)\, d\nu_x(\xi)= \be_x^\mu[u(R_\infty)]=\be^\mu[u(x\cdot R_\infty)]$$ for any positive measurable function $u$ on $\partial \Gamma$. 
 It is known that, for all $x,y\in\Gamma$, the harmonic measures $\nu_x$ and $\nu_y$ are equivalent with a bounded density. The density of $\nu_x$ with respect to $\nu$ is given by the 
 {\it Martin kernel} 
 $${\cal K}^\mu(x,\xi)=\frac{d\nu_x}{d\nu}(\xi)\,$$
 see Definition \ref{naim kernel} and Lemma \ref{LB-naim}.  
 
 Consider the bilinear form 
\begin{align*}
{\cal E}^\mu(f,g)=\dfrac{1}{2}\sum_{x,y\in\Gamma} \mu(x^{-1}y)(f(x)-f(y))(g(x)-g(y))\,,
\end{align*}
and its full domain 
\begin{align*}
{\cal F}^\mu=\{f:\Gamma\ra{\mathbb R}\,;\, {\cal E}^\mu(f,f)<\infty\}\,.
\end{align*} 
Define 
\begin{align*}
\ell_2(\Gamma)&:=\left\{f:\Gamma\to\mathbb{R}\ ;\ \|f\|^2_{\ell_2(\Gamma)}:=\sum_{x\in\Gamma}f(x)^2<\infty\right\},  
\end{align*}
then for any $f\in{\cal F}^{\mu}$, we have that
\begin{align}\label{ele}
{\cal E}^{\mu}(f,f)=\frac{1}{2}\sum_{x,y\in\Gamma}\mu(x^{-1}y)(f(x)-f(y))^2
\leq\sum_{x,y\in\Gamma}\mu(x^{-1}y)(f(x)^2+f(y)^2)=2\|f\|_{\ell_2(\Gamma)}^2.
\end{align}
Therefore $\ell_2(\Gamma)\subset{\cal F}^\mu$. 

By Example \ref{csrw} and Proposition \ref{RD}, we will see that $(\ce^{\mu},\ell_2(\Gamma))$ is a regular Dirichlet form on $\ell_2(\Gamma)$
 (note that $m(x)=\sum_y\mu(x^{-1}y)=1$ for any $x\in\Gamma$), and the corresponding Hunt process is the constant 
 speed random walk $(X_t)$ on $\Gamma$, which is given by $(X_t)=(R_{N_t})$, where $(N_t)$ is an independent
 Poisson process with intensity $1$. 
 Notice that the trajectories of $(X_t)$ and $(R_n)$ are the same. As a consequence, for  
  $\mu\in M_1$ and $x\in\Gamma$,
 when $t$ tends to $\infty$, $X_t$ also $\bp^{\mu}_x$-almost surely converges to a random point in $\partial\Gamma$
 whose distribution is the harmonic measure $\nu_x$.\par

 We also define the discrete Laplacian $\Delta_{\mu}:\{f:\Gamma\to\mathbb{R}\}\to
 \{f:\Gamma\to\mathbb{R}\}$ by 
 \begin{align*}
 \Delta_{\mu}f(x):=\sum_{y\in\Gamma}\mu(x^{-1}y)f(y)-f(x).
 \end{align*} 
 We will say that $f:\Gamma\to\mathbb{R}$ is $\mu$-harmonic on $A\subset\Gamma$ if 
 $\Delta_{\mu}f(x)=0$ for any $x\in A$. 
We introduce the space of harmonic-Dirichlet functions: 
\begin{align*} 
\mathbb{HD}(\mu)&:=\{f\in{\cal F}^{\mu}\ ;\ \Delta_{\mu}f=0\ {\rm on}\ \Gamma\}.
\end{align*}
 We have the following decomposition for ${\cal F}^{\mu}$, see also \cite[Theorem 3.69]{Soa}. 
 
 \begin{prop}\label{RD}
 We have that $\ell_2(\Gamma)\subset{\cal F}^{\mu}$, and 
 for every $f\in{\cal F}^{\mu}$, there exists a unique pair of functions 
 $(f_0,f_{\mathbb{HD}})\in\ell_2(\Gamma)\times\mathbb{HD}(\mu)$ such that $f=f_0+f_{\mathbb{HD}}$.
 For such a pair of functions $(f_0,f_{\mathbb{HD}})$, we have that $\ce^{\mu}(f,f)=\ce^{\mu}(f_0,f_0)
 +\ce^{\mu}(f_{\mathbb{HD}},f_{\mathbb{HD}})$.
 In other words, the following orthogonal decomposition holds:
\begin{align}\label{royden}
{\cal F}^\mu=\ell_2(\Gamma)\bigoplus\mathbb {HD}(\mu).
\end{align}
Moreover, we have that $\ell_2(\Gamma)=\overline{C_0(\Gamma)}^{\ce^{\mu}_1}$, where
 $\ce^{\mu}_1(\cdot,\cdot):=\ce^{\mu}(\cdot,\cdot)+(\cdot,\cdot)_{\ell_2(\Gamma)}$ and 
 $$ C_0(\Gamma):=\{f':\Gamma\to\br\ ;\ |{\rm supp}(f')|<\infty\}.$$
\end{prop}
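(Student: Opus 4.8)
The plan is to establish the decomposition \eqref{royden} as an orthogonal decomposition of the Hilbert space $({\cal F}^\mu,\ce^\mu_1)$, exploiting that harmonicity is exactly the orthogonality condition to finitely supported functions. First I would check that $({\cal F}^\mu,\ce^\mu_1)$ is a Hilbert space: symmetry and nonnegativity of $\ce^\mu$ are immediate, and completeness follows because an $\ce^\mu_1$-Cauchy sequence is in particular $\ell_2$-Cauchy (hence converges pointwise) and its increments converge in the obvious weighted $\ell_2$-space on pairs, so the limit again lies in ${\cal F}^\mu$ with the right norm. Next, the inclusion $\ell_2(\Gamma)\subset{\cal F}^\mu$ is a short estimate: for $f\in\ell_2(\Gamma)$, expanding $(f(x)-f(y))^2\le 2f(x)^2+2f(y)^2$ and using that $\mu$ is a probability measure (so $\sum_y\mu(x^{-1}y)=1$) gives $\ce^\mu(f,f)\le 4\|f\|_{\ell_2(\Gamma)}^2<\infty$. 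The same kind of Cauchy--Schwarz bound shows $C_0(\Gamma)$ is $\ce^\mu_1$-dense in $\ell_2(\Gamma)$ (approximate $f$ by its restrictions to finite exhausting sets), which gives the last assertion $\ell_2(\Gamma)=\overline{C_0(\Gamma)}^{\ce^\mu_1}$; I would also record that $\ell_2(\Gamma)$ is $\ce^\mu_1$-closed in ${\cal F}^\mu$, being complete in its own right.

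Now I would identify $\mathbb{HD}(\mu)$ with the $\ce^\mu_1$-orthogonal complement of $\ell_2(\Gamma)$ inside ${\cal F}^\mu$ --- but with $\ce^\mu$, not $\ce^\mu_1$, as the pairing, which is the one subtle point. The identity recorded in the excerpt, $\ce^\mu(f,g)=-\sum_{x}f(x)\Delta_\mu g(x)$ for $f\in C_0(\Gamma)$, $g\in{\cal F}^\mu$, shows that $g\in{\cal F}^\mu$ is $\mu$-harmonic on all of $\Gamma$ if and only if $\ce^\mu(f,g)=0$ for every $f\in C_0(\Gamma)$, and then (by $\ce^\mu_1$-density of $C_0(\Gamma)$ in $\ell_2(\Gamma)$, together with the fact that $\ce^\mu(f_0,g)$ is $\ce^\mu_1$-continuous in $f_0$) for every $f_0\in\ell_2(\Gamma)$. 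Conversely any $g$ that is $\ce^\mu$-orthogonal to $\ell_2(\Gamma)$ is $\ce^\mu$-orthogonal to $C_0(\Gamma)$, hence $\mu$-harmonic. So $\mathbb{HD}(\mu)=\{g\in{\cal F}^\mu\,;\,\ce^\mu(g,f_0)=0\ \forall f_0\in\ell_2(\Gamma)\}$. Since $\ce^\mu$ and $\ce^\mu_1$ differ only by the inner product of $\ell_2(\Gamma)$, and on the pair $(f_0,g)$ with $g$ harmonic one still needs to see that $\ce^\mu$-orthogonality forces $(f_0,g)_{\ell_2(\Gamma)}$ to behave --- here I note $g$ need not lie in $\ell_2(\Gamma)$, so $(f_0,g)_{\ell_2(\Gamma)}$ may not even make sense; the clean statement is simply that $\mathbb{HD}(\mu)$ is the orthogonal complement of $\ell_2(\Gamma)$ with respect to the bilinear form $\ce^\mu$ restricted appropriately. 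I would phrase the Hilbert-space decomposition directly: $\ell_2(\Gamma)$ is a closed subspace of the Hilbert space $({\cal F}^\mu,\ce^\mu_1)$, so ${\cal F}^\mu=\ell_2(\Gamma)\oplus\ell_2(\Gamma)^{\perp_1}$, and then check $\ell_2(\Gamma)^{\perp_1}=\mathbb{HD}(\mu)$: for $g\in\ell_2(\Gamma)^{\perp_1}$ and $f\in C_0(\Gamma)\subset\ell_2(\Gamma)$ one has $0=\ce^\mu_1(f,g)=\ce^\mu(f,g)+(f,g)_{\ell_2}$, and replacing $f$ by scalar multiples $tf$ and letting the argument range shows both terms vanish separately, giving $\Delta_\mu g=0$; the reverse inclusion is the computation above. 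This orthogonal decomposition immediately yields existence and uniqueness of $(f_0,f_{\mathbb{HD}})$ and the Pythagorean identity $\ce^\mu(f,f)=\ce^\mu(f_0,f_0)+\ce^\mu(f_{\mathbb{HD}},f_{\mathbb{HD}})$, since the cross term $\ce^\mu(f_0,f_{\mathbb{HD}})$ vanishes by harmonicity of $f_{\mathbb{HD}}$ (approximate $f_0$ in $\ell_2(\Gamma)$ by $C_0(\Gamma)$ functions and use $\ce^\mu$-continuity).

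The step I expect to be the main obstacle is the careful handling of the difference between $\ce^\mu$ and $\ce^\mu_1$ in the orthogonality characterization --- making sure the cross term $\ce^\mu(f_0,f_{\mathbb{HD}})$ genuinely vanishes and that "orthogonal" in the statement refers to $\ce^\mu$ even though the ambient Hilbert structure is $\ce^\mu_1$. Concretely: for $f_0\in\ell_2(\Gamma)$ pick $f_n\in C_0(\Gamma)$ with $f_n\to f_0$ in $\ce^\mu_1$; then $\ce^\mu(f_n,f_{\mathbb{HD}})=-\sum_x f_n(x)\Delta_\mu f_{\mathbb{HD}}(x)=0$, and $\ce^\mu(f_n,f_{\mathbb{HD}})\to\ce^\mu(f_0,f_{\mathbb{HD}})$ by Cauchy--Schwarz on the weighted pair-space (since $\ce^\mu(f_n-f_0,f_n-f_0)\to 0$). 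This also forces $f_0$ and $f_{\mathbb{HD}}$ to be $\ce^\mu_1$-orthogonal only up to the $(f_0,f_{\mathbb{HD}})_{\ell_2}$ term, which is why I would be careful to state \eqref{royden} as an orthogonal decomposition with respect to $\ce^\mu$ (on which the cross term vanishes), while using $\ce^\mu_1$-completeness only to get the closed-subspace/projection machinery off the ground. Everything else --- the $\ell_2\subset{\cal F}^\mu$ inclusion, density of $C_0(\Gamma)$, closedness --- is routine weighted-$\ell_2$ bookkeeping.
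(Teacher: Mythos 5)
There is a genuine gap, and it sits at the foundation of your argument: $(\mathcal{F}^{\mu},\mathcal{E}^{\mu}_1)$ is not a Hilbert space, because $\mathcal{E}^{\mu}_1(f,f)=\mathcal{E}^{\mu}(f,f)+\|f\|_{\ell_2(\Gamma)}^2$ is infinite for every $f\in\mathcal{F}^{\mu}\setminus\ell_2(\Gamma)$ --- in particular for every nonzero element of $\mathbb{HD}(\mu)$ (e.g.\ the constant function $1$, which is harmonic with zero energy but has infinite $\ell_2$-norm on the infinite group $\Gamma$). The set on which $\mathcal{E}^{\mu}_1$ is finite is exactly $\ell_2(\Gamma)$ itself, so the ``orthogonal complement of $\ell_2(\Gamma)$ in $(\mathcal{F}^{\mu},\mathcal{E}^{\mu}_1)$'' is vacuous and the projection machinery never gets off the ground. (You half-notice this when you remark that $(f_0,g)_{\ell_2}$ ``may not even make sense'', but then run the $\perp_1$-decomposition anyway. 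Also, ``replacing $f$ by $tf$ shows both terms vanish separately'' is a non sequitur: both $\mathcal{E}^{\mu}(f,g)$ and $(f,g)_{\ell_2}$ are linear in $f$, so scaling gives no new information.) The correct ambient Hilbert space is $(\mathcal{F}^{\mu}/\mathbb{R},\mathcal{E}^{\mu})$, and the decomposition is the Royden decomposition $\mathcal{F}^{\mu}=\mathcal{F}^{\mu}_0\oplus\mathbb{HD}(\mu)$, orthogonal for $\mathcal{E}^{\mu}$, where $\mathcal{F}^{\mu}_0$ is the closure of $C_0(\Gamma)$ in the energy seminorm $\sqrt{\mathcal{E}^{\mu}(\cdot,\cdot)}$ alone; the paper cites Soardi (Theorem 3.69) for this.

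This exposes the second, more substantive omission: once one has $\mathcal{F}^{\mu}=\mathcal{F}^{\mu}_0\oplus\mathbb{HD}(\mu)$, the whole content of the proposition is the identification $\mathcal{F}^{\mu}_0=\ell_2(\Gamma)$. The inclusion $\ell_2(\Gamma)\subset\mathcal{F}^{\mu}_0$ is the routine estimate you give, but the reverse inclusion requires a Poincar\'e-type inequality $\|f\|_{\ell_2(\Gamma)}^2\leq C\,\mathcal{E}^{\mu}(f,f)$ for $f\in C_0(\Gamma)$, i.e.\ a spectral gap. This is where the hypothesis on $\Gamma$ enters: non-elementary hyperbolic groups satisfy a linear isoperimetric inequality, hence are non-amenable, and the paper deduces the inequality from Soardi (Theorem 4.27). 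Your argument uses no property of $\Gamma$ whatsoever, and the statement it would prove is false in general: for the simple random walk on $\mathbb{Z}^3$, the Green function $G(0,\cdot)$ lies in the energy-closure of $C_0$ but not in $\ell_2$, so the analogue of \eqref{royden} fails. Your observation that $\ell_2(\Gamma)$ is ``$\mathcal{E}^{\mu}_1$-closed'' cannot substitute for this, since what is needed is closedness of $\ell_2(\Gamma)$ in the energy seminorm, which is precisely equivalent to the spectral-gap inequality. The remaining pieces of your write-up --- the bound $\mathcal{E}^{\mu}(f,f)\leq C\|f\|_{\ell_2}^2$ giving $\ell_2(\Gamma)\subset\mathcal{F}^{\mu}$, the $\mathcal{E}^{\mu}_1$-density of $C_0(\Gamma)$ in $\ell_2(\Gamma)$, and the vanishing of the cross term $\mathcal{E}^{\mu}(f_0,f_{\mathbb{HD}})$ by approximating $f_0$ by finitely supported functions --- are fine and consistent with the paper.
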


\begin{proof} 

It is easy to see that the space ${\cal F}^{\mu}/\mathbb{R}$ equipped with the norm
$\sqrt{\ce^{\mu}(\cdot,\cdot)}$
 is a Hilbert space. Let ${\cal F}^{\mu}_0$ be the closure of $C_0(\Gamma)$ with respect to 
 the metric $\sqrt{{\cal E}^{\mu}(\cdot,\cdot)}$.

Note that for any $f\in C_0(\Gamma)
$
 and any $g\in {\cal F}^{\mu}$, we have that
 \begin{align}\label{int-part}
 \ce^{\mu}(f,g)=-\sum_{x\in\Gamma}f(x)(\Delta_{\mu}g)(x).
 \end{align} 
 Therefore the spaces ${\cal F}^{\mu}_0$ and $\mathbb{HD}(\mu)$ are orthogonal for the scalar product $\ce^\mu$ and 
\begin{align*}
{\cal F}^{\mu}={\cal F}^{\mu}_0\bigoplus \mathbb{HD}(\mu)\,.
\end{align*}

We still have to prove that ${\cal F}^{\mu}_0=\ell_2(\Gamma)$.
We already noticed that $\ell_2(\Gamma)\subset{\cal F}^{\mu}_0$.
 On the other hand, every non-elementary hyperbolic group satisfies the following linear isoperimetric inequality: there exists a constant $C>0$ such that for any finite non-empty subset $A\subset\Gamma$, it holds that
 $$\#(A)\leq C\#(\partial_eA),$$
 where $\#$ denotes the cadinality of a set and $\partial_eA:=\{\text{an edge}\ e=\{x,y\}:\ x\in A\ \text{and}\ y\notin A\}$.
 Therefore, there exists a constant $C'>0$ such that 
 \begin{align}\label{P}
 \|f\|_{\ell_2(\Gamma)}^2\leq C'{\cal E}^{\mu}(f,f)
 \end{align} 
 for any $f\in C_0(\Gamma)$ (See Theorem 4.27 in \cite{Soa}). 
 Thus ${\cal F}^{\mu}_0$ is also the closure of $C_0(\Gamma)$ 
 in $\ell_2(\Gamma)$ and therefore coincides with $\ell_2(\Gamma)$.  

 \qed
 \end{proof}
\medskip

 We next study boundary values of functions in $\cf^{\mu}$. 
 A natural way to define a boundary value for $f\in\cf^{\mu}$ is to take a limit of $f(R_n)$.
 Here we will use the discrete-time process $(R_n)$ for simplicity of notation, but the
 same results hold for the continuous-time process $(X_t)$.
 By Proposition \ref{RD},
 for any $f\in\cf^{\mu}$, there exists a unique pair of functions $(f_0,f_{\mathbb{HD}})\in\ell_2(\Gamma)\times
 \mathbb{HD}(\mu)$ such that 
 $$f=f_0+f_{\mathbb{HD}}.$$
 By the definition of $\ell_2(\Gamma)$, for any $\varepsilon>0$ there exists a finite set $B\subset \Gamma$
 such that 
 $$\sum_{x\in \Gamma\setminus B}f_0(x)^2<\varepsilon.$$
 Since $(R_n)$ is transient, this implies that  
 $$\lim_{n\to\infty}f_0(R_n)(=\lim_{t\to\infty}f_0(X_t))=0$$
 $\bp^{\mu}_x$-almost surely  for any $x\in\Gamma$. Thus, we only need to consider the limit of $f_{\mathbb{HD}}(R_n)$. 
Since $f_{\mathbb{HD}}$ is $\mu$-harmonic,
 we have that for any $x\in\Gamma$, $(f_{\mathbb{HD}}(R_n))$ is a martingale under 
 $\bp^{\mu}_x$. It is shown in Theorem 9.11 in \cite{LP} that for any $x\in\Gamma$ we have that
 \begin{align*}
\sup_{n\in\mathbb{N}}\be^{\mu}_x[f_{\mathbb{HD}}(R_n)^2]\leq f_{\mathbb{HD}}(x)^2+2G^{\mu}(id)
\ce^{\mu}(f_{\mathbb{HD}},f_{\mathbb{HD}}),
 \end{align*}
 where $G^{\mu}(x):=\sum_{k\geq0}\mu^{*k}(x)$ is the Green function associated to $\mu$.
 Therefore, under  $\bp^{\mu}_x$, $(f_{\mathbb{HD}}(R_n))$ is a martingale which is bounded in $L^2$. By Doob's theorem,
 it converges almost surely and in $L^2$. Since $(\partial\Gamma,\nu)$ is the Poisson  boundary 
 of the random walk, see \cite{Ka}, 
 there exists a unique function $u\in L^2(\partial\Gamma,\nu)$ such that 
 \begin{align}\label{rw-conv} 
 \lim_{n\to\infty}f_{\mathbb{HD}}(R_n)=u(R_{\infty})\ {\it a.s.},
 \end{align}
 and 
 \begin{align}\label{he}
  f_{\mathbb{HD}}(x)=\int_{\partial\Gamma}u(\xi)d\nu_x(\xi)=:Hu(x).
 \end{align} 

 We summarize  the discussions from the previous page in the following corollary. 
 \begin{cor}\label{new}
 For $f\in{\cal F}^\mu$, let $f=f_0+f_{\mathbb {HD}}\ (f_0\in\ell_2(\Gamma),f_{\mathbb {HD}}\in\mathbb{HD}(\mu))$ be its Royden decomposition as in \eqref{royden}.
 Then the limit $\lim_{n\to\infty} f(R_n)=\lim_{n\to\infty} f_{\mathbb{HD}}(R_n)$ almost surely exists and the limiting function $u$ defined in \eqref{rw-conv} 
  belongs to
 $L^2(\partial \Gamma,\nu)$. 

 \end{cor}

Let $u\in L^2(\partial\Gamma,\nu)$ and define $Hu$ as in \eqref{he}. Note that this definition makes sense since $\nu_x$ is absolutely continuous with respect to $\nu$ with a bounded density, see Lemma \ref{LB-naim}, and therefore $u\in L^2(\partial\Gamma,\nu_x)$ for all $x\in\Gamma$. 
 
  \begin{lem}\label{harm-ext}
For $u\in L^2(\partial\Gamma,\nu)$,  the sequence $(Hu(R_n))$ forms a martingale that is bounded in $L^2$ and converges almost surely and in $L^2$ towards $u(R_\infty)$. 
 \end{lem}
 \begin{proof} 
 
 Observe that 
 \begin{equation}\label{eq:he} 
 Hu(R_n)=\be^\mu_{R_n}[u(R_\infty)]=\be^\mu[u(R_\infty)\vert{\cal G}_n]\,. 
 \end{equation} 
 The first equality is the definition of $Hu$. The second equality is the Markov property applied to the random variable $u(R_\infty)$. 
 
 Since $u\in L^2(\partial\Gamma,\nu)$, the martingale $\be^\mu[u(R_\infty)\vert{\cal G}_n]$ is bounded in $L^2$. 
By Doob's theorem, it  converges almost surely and in $L^2$ towards $\be^\mu[u(R_\infty)\vert{\cal G}_\infty]=u(R_\infty)$. 
 \qed \end{proof}

  \medskip
The following result gives a motivation to introduce a class of Besov spaces associated to $\mu\in M_1$. We will see below that the Besov space associated to $\mu$ gives an alternative description of the collection of $u$'s in $L^2(\partial\Gamma,\nu)$ such that $Hu\in\mathbb{HD}(\mu)$.
 
 \begin{defn}\label{naim kernel} 
 For $x,y\in\Gamma$, define the {\it Martin kernel} $\mathcal{K}^{\mu}(\cdot,\cdot)$ by
 \begin{align*}
 \mathcal{K}^{\mu}(x,y):=\frac{G^{\mu}(x^{-1}y)}{G^{\mu}(y)}.
 \end{align*} 
  It is shown in \cite{Dyn} 
  that $ \mathcal{K}^{\mu}$ can be extended to $\Gamma\times\mathcal{M}^{\mu}$,
 where $\mathcal{M}^{\mu}$ is the {\it Martin boundary} of $(\Gamma,\mu)$.\par
 For $x,y\in\Gamma$, define the {\it Na\"im kernel} $\Theta^{\mu}(\cdot,\cdot)$ by
 \begin{align*}
 \Theta^{\mu}(x,y):=\frac{G^{\mu}(x^{-1}y)}{G^{\mu}(x)G^{\mu}(y)}.
 \end{align*}
 It is shown in \cite{Sil} (See also \cite{Na}) that $\Theta^{\mu}$ can be extended to $\mathcal{M}^{\mu}\times\mathcal{M}^{\mu}
 \setminus\{(\omega,\omega')\in\mathcal{M}^{\mu}\times\mathcal{M}^{\mu}\ ;\ \omega=\omega'\}$.\par 
 It is also shown in \cite{Dyn} that the restriction of $\mathcal{K}^\mu(x,.)$ to $\mathcal{M}^{\mu}$ is a version of the 
 Radon-Nikodym derivative of $\nu_x$ with respect to $\nu$. 
 \end{defn}
 
 \begin{lem}\label{LB-naim} 
  The Martin kernel $\mathcal{K}^{\mu}(x,y)$ $(x,y\in\Gamma)$ has the following lower bound:
$$ \frac {G^\mu(id)}{G^\mu(x)}\geq\mathcal{K}^\mu(x,y)\geq \frac{G^\mu(x)}{G^\mu({\it id})}\ \ {\rm for\ any}\ x,y\in\Gamma.$$
 The Na\"im kernel $\Theta^{\mu}(x,y)$ $(x,y\in\Gamma)$ has the following lower bound:
$$ \Theta^{\mu}(x,y)\geq 1/G^{\mu}({\it id})\ \ {\rm for\ any}\ x,y\in\Gamma.$$
\end{lem}
We will give the proof for the sake of completeness. We will need this lemma to ensure that the Besov space associated to $\mu$ is included in $L^2(\partial\Gamma,\nu)$. See Definition \ref{emu} below.
 
\begin{proof} 

By the Markov property, we have 
$$G^{\mu}(z)=G^{\mu}({\it id})\bp^{\mu}(R_k=z\ {\rm for\ some}\ k\geq1).$$ 
Therefore 
\begin{align*} 
& \mathcal{K}^\mu(x,y)=\frac{ \bp^{\mu}_{x}(R_k={y}\ {\rm for\ some}\ k\geq1)} {\bp^{\mu}_{\it id}(R_k={y}\ {\rm for\ some}\ k\geq1)}. 
\end{align*} 
But 
\begin{align*} &\bp^{\mu}_{\it id}(R_k={y}\ {\rm for\ some}\ k\geq1)\\ 
&\geq \bp^{\mu}_{\it id}(R_k={x}\ {\rm for\ some}\ k\geq1)\, \bp^{\mu}_{\it x}(R_k={y}\ {\rm for\ some}\ k\geq1). 
\end{align*} 
Therefore 
$$\mathcal{K}^\mu(x,y)\leq \frac 1{\bp^{\mu}_{\it id}(R_k={x}\ {\rm for\ some}\ k\geq1)} = \frac{G^\mu({\it id})}{G^\mu(x)}\,.$$ 
The lower bound on $\mathcal{K}^\mu(x,y)$ is proved the same way. The bound on $ \Theta^{\mu}(x,y)$ follows at once. 

\qed
\end{proof}
 \medskip
  
 \begin{remark}The Na\"im kernel is pointwisely defined on the Martin boundary. In Definition \ref{emu} and thereafter, we use a version of the Na\"im 
kernel that is $\nu\times\nu$-almost surely defined on the Gromov boundary.
It is shown in \cite{Ka} that the Gromov boundary equipped with the harmonic measure is isomorphic to the Poisson boundary of the walk, therefore it is measurably isomorphic to the Martin boundary 
equipped with the harmonic measure. See \cite[Chapter VI]{Woe} for details.
\end{remark}

  By \eqref{he} and \cite[Theorem 3,5]{Sil}, we have the following result, which is an extension to the discrete setting of the Douglas integral (see \eqref{circle} and \eqref{douglas}) introduced in Section 1.

\begin{prop} \label{=}\cite[Theorem 3.5]{Sil} 
Suppose that we have a function $u\in L^2(\partial\Gamma,\nu)$. 
Then we have   
 \begin{align}\label{naim}
{\cal E}^\mu(Hu,Hu)=\int\int_{\partial\Gamma\times \partial\Gamma}
 (u(\xi)-u(\eta))^2\Theta^\mu(\xi,\eta) d\nu(\xi)d\nu(\eta)\,,
\end{align}
where $\Theta^\mu$ is the Na\"im kernel and with the understanding that the right-hand side of \eqref{naim} is finite if and only if $Hu\in\mathbb{HD}(\mu)$. 
\end{prop}



We now introduce the following bilinear form associated to $\mu$.
\begin{defn}\label{emu}Define
$$\mathcal{E}^{\partial\Gamma,\mu}(u,u)=\int\int_{\partial\Gamma\times \partial\Gamma}
 (u(\xi)-u(\eta))^2 \Theta^\mu(\xi,\eta) d\nu(\xi)d\nu(\eta),$$ 
with domain $B_2(\mu):=\{u:\partial \Gamma\rightarrow\mathbb{R}\ ;\mathcal{E}^{\partial\Gamma,\mu}(u,u)<\infty\}$. 
We will call $(\mathcal{E}^{\partial\Gamma,\mu},B_2(\mu))$ the Besov space associated to $\mu\in M_1$.
 \end{defn}

Note that $B_2(\mu)\subset L^2(\partial\Gamma,\nu)$ by Lemma \ref{LB-naim}.
 By Proposition \ref{=}, we have 
 \begin{align}\label{besov=hu}
 B_2(\mu)=\{u\in L^2(\partial\Gamma,\nu);\ Hu\in\mathbb{HD}(\mu)\}.
 \end{align} 
 Thus if $u\in B_2(\mu)$ then $Hu\in \mathcal{F}^{\mu}$ and Lemma \ref{harm-ext} implies that $\lim_{n\to\infty}f(R_n)=u(R_\infty)$. 
 Reciprocally,   if $f\in \mathcal{F}^{\mu}$ then $f_{\mathbb{HD}}=Hu$ for some $u$ in $B_2(\mu)$ and Corollary \ref{new} and Lemma \ref{harm-ext} imply that $\lim_{n\to\infty}f(R_n)=u(R_\infty)$. 
 We summarize these findings in the next corollary.

\begin{cor}\label{new-new}
 For $f\in\mathcal{F}^{\mu}$, let $\tilde{U}(\mu)f$ be the function on $\partial\Gamma$ defined by  
 $$\lim_{n\to\infty}f(R_n)=\tilde{U}(\mu)f(R_{\infty})\ {\it a.s.}$$ Then $\tilde{U}(\mu)$ defines a surjective linear map from $\mathcal{F}^{\mu}$ to $B_2(\mu)$ and ${\rm Ker}(\tilde{U}(\mu))=\ell_2(\Gamma)$. 
\end{cor}

\medskip

  We will study the relations between $B_2(\mu)$ and $B_2(d)$.
  Hereafter, we assume $\mu\in M_2$, which is needed for the next proposition.
 It is a very important assumption since it plays in our probabilistic construction a similar role to 
  quasi-isometries in the geometric context. 
  In what follows, if $f$ and $g$ are two functions defined on a set $A$, $f\asymp g$ means that there exists a constant
  $C>1$ such that $C^{-1}g(a)\leq f(a)\leq Cg(a)$ for any $a\in A$.
 \begin{prop}\cite[Lemma 2.1]{PSC}\label{psc}
 For any $\mu,\mu'\in M_2$, we have that 
 \begin{align*}
 \cf^{\mu}=\cf^{\mu'},\ {\rm and}\ \ \ce^{\mu}(f,f)\asymp\ce^{\mu'}(f,f)\ {\rm for\ any}\ f\in\cf^{\mu}=\cf^{\mu'}.
 \end{align*}
 \end{prop}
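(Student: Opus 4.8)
The plan is to compare both $\ce^{\mu}$ and $\ce^{\mu'}$ to one reference form and then chain the comparisons. Fix the finite symmetric generating set $S$ used to define the word metric $d$, and put
\begin{align*}
\ce_{S}(f,f):=\sum_{x\in\Gamma}\sum_{s\in S}\big(f(x)-f(xs)\big)^{2}\,,
\end{align*}
which, up to an irrelevant constant, is the form attached to the uniform probability measure on $S$ (an element of $M_{2}$ since $S$ is finite). I claim there is a constant $C>1$, depending only on $\Gamma$, $S$ and $\mu$, such that $C^{-1}\ce_{S}(f,f)\le\ce^{\mu}(f,f)\le C\,\ce_{S}(f,f)$ for \emph{every} $f:\Gamma\to\br$, and likewise with $\mu'$ in place of $\mu$. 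Granting this, one gets $\ce^{\mu}(f,f)\asymp\ce^{\mu'}(f,f)$ for all $f$; in particular $\ce^{\mu}(f,f)<\infty$ exactly when $\ce^{\mu'}(f,f)<\infty$, which is precisely the identity $\cf^{\mu}=\cf^{\mu'}$, and the equivalence of the forms then holds on this common domain. Since both inequalities will be derived termwise, no completion, density, or closability discussion is needed.

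For the upper bound, write $\ce^{\mu}(f,f)=\sum_{z\in\Gamma}\mu(z)\sum_{x\in\Gamma}\big(f(x)-f(xz)\big)^{2}$ and represent each $z\neq {\it id}$ by a $d$-geodesic word $z=s_{1}\cdots s_{d(z)}$ with $s_{i}\in S$. Telescoping and the Cauchy--Schwarz inequality give
\begin{align*}
\big(f(x)-f(xz)\big)^{2}\le d(z)\sum_{i=1}^{d(z)}\big(f(xs_{1}\cdots s_{i-1})-f(xs_{1}\cdots s_{i})\big)^{2}\,.
\end{align*}
Summing over $x\in\Gamma$ and substituting $w=xs_{1}\cdots s_{i-1}$ in each inner sum bounds $\sum_{x}\big(f(x)-f(xz)\big)^{2}$ by $d(z)^{2}\,\ce_{S}(f,f)$, because $\sum_{w}\big(f(w)-f(ws_{i})\big)^{2}\le\ce_{S}(f,f)$ for every $s_{i}\in S$. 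Hence $\ce^{\mu}(f,f)\le\big(\sum_{z}d(z)^{2}\mu(z)\big)\,\ce_{S}(f,f)$, and the bracketed quantity is finite precisely because $\mu\in M_{2}$.

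For the lower bound one uses admissibility. Since $\Gamma$ is finitely generated and $\mathrm{supp}(\mu)$ generates $\Gamma$, each of the finitely many $s\in S$ may be written as a finite product of elements of $\mathrm{supp}(\mu)$; only finitely many elements $a\in\mathrm{supp}(\mu)$ occur in all these products, so $c_{0}:=\min\mu(a)>0$ over that finite set. Telescoping along such a word for $s$, exactly as in the previous paragraph, bounds $\sum_{x}\big(f(x)-f(xs)\big)^{2}$ by a constant (depending only on the chosen words) times $\sum_{x}\big(f(x)-f(xa)\big)^{2}$ over the finitely many letters $a$ involved, and for each such $a$
\begin{align*}
\mu(a)\sum_{x}\big(f(x)-f(xa)\big)^{2}\le\ce^{\mu}(f,f)\,,\qquad\text{hence}\qquad\sum_{x}\big(f(x)-f(xa)\big)^{2}\le c_{0}^{-1}\,\ce^{\mu}(f,f)\,,
\end{align*}
the first inequality because the single $z=a$ term is dominated by the full sum defining $\ce^{\mu}(f,f)$. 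Summing over $s\in S$ gives $\ce_{S}(f,f)\le C\,\ce^{\mu}(f,f)$.

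I expect the only genuinely delicate step to be this lower bound: the hypothesis is that $\mathrm{supp}(\mu)$ generates $\Gamma$, but $\mathrm{supp}(\mu)$ may be infinite, so one has to first extract from it a \emph{finite} generating subset (equivalently, realize each fixed generator as a finite support-word) before the uniform lower bound $c_{0}>0$ becomes available; once that is in place the rest is the routine telescoping-plus-Cauchy--Schwarz estimate. It is worth noting that the argument uses neither the hyperbolicity of $\Gamma$ nor, beyond the standing hypotheses, the symmetry of $\mu$: it is a general comparison valid on any finitely generated group between two admissible probability measures of finite second moment.
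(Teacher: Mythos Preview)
The paper does not supply its own proof of this proposition: it is quoted as \cite[Lemma~2.1]{PSC} and used as a black box. Your argument is correct and is in fact essentially the proof given in that reference --- comparison of $\ce^{\mu}$ to the generating-set form $\ce_{S}$ by telescoping along geodesic words and Cauchy--Schwarz, with the finite second moment yielding the upper bound and admissibility (via a finite subset of $\mathrm{supp}(\mu)$ generating $\Gamma$) yielding the lower bound. Your closing remark that neither hyperbolicity nor symmetry is needed is also accurate; the result holds on any finitely generated group.
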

 Before giving the main result of this section, we will prove the following results.
 
 \begin{lem}\label{c}
 For any
 $d\in J_{AR}(\partial\Gamma)$ and any $\mu\in M_2$,
 we have that $C(\partial \Gamma)\cap B_2(\mu)=C(\partial \Gamma)\cap B_2(d)$.
 We will denote the common set by ${\cal C}$.
 \end{lem}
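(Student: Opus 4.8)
The plan is to identify each of $C(\partial\Gamma)\cap B_2(\delta)$ and $C(\partial\Gamma)\cap B_2(\mu)$ with the set of continuous functions on $\partial\Gamma$ that arise as boundary traces of one and the same space of finite-energy functions on the Cayley graph of $\Gamma$. By Proposition \ref{psc} the domain $\cf^\mu$ is the same for every $\mu\in M_2$; taking $\mu_0$ to be the uniform probability on the generating set shows it agrees, up to comparable norms, with the space of functions on $\Gamma$ with square-summable gradient along the Cayley graph. The key claim is: \emph{if $k\in\cf^\mu$ extends to a continuous function on $\Gamma\cup\partial\Gamma$, then $k|_{\partial\Gamma}\in B_2(\delta)\cap B_2(\mu)$}; combined with the fact that every element of $C(\partial\Gamma)\cap B_2(\delta)$ and of $C(\partial\Gamma)\cap B_2(\mu)$ is such a trace, this yields both inclusions and hence the equality.

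For the key claim, let $k\in\cf^\mu$ extend continuously to $\partial\Gamma$. Since $\Gamma$ is non-elementary, its Cayley graph is a geodesic hyperbolic space in which every point lies within bounded distance of a geodesic ray from ${\it id}$; by the third item of Theorem \ref{hf} it is quasi-isometric to $\Gamma_\delta$ through a map extending to the identity on $\partial\Gamma$, and by Theorem \ref{inv} together with (\ref{coho}) this induces an isomorphism between $\cf^\mu/(\ell_2(\Gamma)+\br)$ and $\ell_2H^1(J(\partial\Gamma))$ sending $[k]$ to $[k\circ c_0]$ modulo $\ell_2(V(\Gamma_\delta))+\br$, where $c_0$ is uniformly close to the quasi-isometry. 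Along a geodesic ray $r$ of $\Gamma_\delta$ converging to $\xi$ one has $c_0(r(n))\to\xi$ in $\Gamma\cup\partial\Gamma$, so $k(c_0(r(n)))\to k|_{\partial\Gamma}(\xi)$ by continuity, while any $\ell_2(V(\Gamma_\delta))$-function tends to $0$ along $r$ (its values on the distinct vertices $r(0),r(1),\dots$ are square-summable); hence $(k\circ c_0)_\infty=k|_{\partial\Gamma}$ modulo a constant, and Theorem \ref{besov} identifies the image of $[k]$ under $\ell_2H^1(J(\partial\Gamma))\simeq B_2(\delta)/\!\sim$ with $[k|_{\partial\Gamma}]$, so $E_\delta(k|_{\partial\Gamma},k|_{\partial\Gamma})<\infty$. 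For the $B_2(\mu)$ part, write $k=k_0+k_{\mathbb{HD}}$ as in Proposition \ref{RD}; since $(R_n)$ is transient $k_0(R_n)\to0$ $\bp^\mu$-a.s., and by Proposition \ref{=} there is $v\in B_2(\mu)$ with $\lim_nk_{\mathbb{HD}}(R_n)=v(R_\infty)$ a.s.; as $R_n\to R_\infty$ in $\Gamma\cup\partial\Gamma$ a.s., continuity of $k$ also gives $\lim_nk(R_n)=k|_{\partial\Gamma}(R_\infty)$ a.s., whence $v=k|_{\partial\Gamma}$ $\nu$-a.e. and $k|_{\partial\Gamma}\in B_2(\mu)$.

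To exhibit every $w\in C(\partial\Gamma)\cap B_2(\delta)$ as such a trace, take for $k$ the average of $w$ over the boundary shadow of $x$, transported to the Cayley graph via the quasi-isometry above: the construction of \cite{BP} gives $\|dk\|_{\ell_2}^2\asymp E_\delta(w,w)<\infty$, and $k$ extends continuously to $\partial\Gamma$ with trace $w$ because the shadows shrink to points. To exhibit $w\in C(\partial\Gamma)\cap B_2(\mu)$, take instead the $\mu$-harmonic extension $k=Hw$, $Hw(x)=\int_{\partial\Gamma}w\,d\nu_x$: then $Hw\in\mathbb{HD}(\mu)\subset\cf^\mu$ (the part of \cite[Theorem 3.5]{Sil} complementing Proposition \ref{=}), and $Hw$ extends continuously to $\partial\Gamma$ with trace $w$ because the Dirichlet problem at infinity is solvable for walks driven by measures in $M_1$ on a non-elementary hyperbolic group, i.e.\ $\nu_x\to\delta_\xi$ weakly as $x\to\xi$. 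Feeding these into the key claim finishes the proof.

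The step I expect to be the crux is the coincidence, for continuous $w$, of the two a priori unrelated boundary-value operations attached to $\cf^\mu$: the radial limit along geodesic rays, which yields the trace into $B_2(\delta)$, and the $L^2$-bounded martingale limit along the walk, which yields the trace into $B_2(\mu)$. These disagree for generic finite-energy functions, and since $\nu$ may be singular with respect to $\mathcal{H}_\delta$ an almost-everywhere identity for one says nothing about the other; this is why the argument is run only for continuous $w$ and why the genuinely pointwise inputs — solvability of the Dirichlet problem at infinity on the probabilistic side, shrinking of shadows on the geometric side — are indispensable rather than cosmetic. A subsidiary point is the bookkeeping of the $\ell_2(V)$-errors produced by the quasi-isometry pull-back, handled by the fact that an $\ell_2$ function on the vertices of a hyperbolic graph vanishes along every geodesic ray and, by transience, along $\bp^\mu$-almost every trajectory.
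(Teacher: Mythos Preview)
Your proof is correct and follows essentially the same route as the paper: the two inclusions are obtained by (i) extending $w\in C(\partial\Gamma)\cap B_2(\delta)$ to $\Gamma$ via the shadow-averaging construction of \cite{BP}, transporting it through the quasi-isometry $\Gamma\leftrightarrow\Gamma_\delta$ extending to the identity on $\partial\Gamma$, and reading off the $B_2(\mu)$-trace via the martingale limit along $(R_n)$; and (ii) extending $w\in C(\partial\Gamma)\cap B_2(\mu)$ to $\Gamma$ via the harmonic extension $Hw$, transporting it back to $\Gamma_\delta$, and reading off the $B_2(\delta)$-trace via radial limits. Your only reorganisation is to abstract the common pattern into a single ``key claim'' (continuous extensions of $\cf^\mu$-functions have traces in both Besov spaces) and then feed the two extensions into it; the paper runs the two inclusions separately, but the ingredients---Proposition~\ref{psc}, the quasi-isometry of Theorem~\ref{hf}, Theorem~\ref{besov}, Proposition~\ref{=}, and Kaimanovich's $\nu_x\to\delta_\xi$---are identical and used at the same places.
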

 \begin{proof} 
 Let $ {\cal E}^{{\rm SRW}}$ be the bilinear form associated to the simple random walk on $\Gamma$
 with respect to a fixed finite symmetric generating set.
 Choose $d\in J_{AR}(\partial\Gamma)$ and $\mu\in M_2$ arbitrarily.
 Take $u\in C(\partial\Gamma)\cap B_2(d)$. 
  Define $g:V(\Gamma_{d})\rightarrow\mathbb{R}$ by
 \begin{align}\label{exx}
 g(x):=\frac{1}{\mathcal{H}_{d}(B(x))}\int_{B(x)}u(\xi)d\mathcal{H}_{d}(\xi).
 \end{align}
 It is shown in the proof of Theorem 3.4 in \cite{BP} that 
 \begin{align*}
 \mathcal{E}^{\partial\Gamma,d}(u,u)\asymp \|dg\|^2_{\ell_2(E(\Gamma_{d}))},\ {\rm and}\ 
 I^{d}(g)=u,\ {\cal H}_{d}\mathchar`-a.e.
 \end{align*}
 Let $F_{d}:\Gamma\rightarrow V(\Gamma_{d})$
 be a quasi-isometry which continuously 
  extends to the identity on $\partial\Gamma$, as was shown to exist in Theorem \ref{hf}.
 We have that
 \begin{align*}
 {\cal E}^{{\rm SRW}}(g\circ F_{d},g\circ F_{d})\asymp \|dg\|^2_{\ell_2(E(\Gamma_{d}))}.
 \end{align*}
  by the stability of Dirichlet forms under quasi-isometries (\cite[Theorem 3.10]{Woe}).
 Note that $\Gamma_{d}$ satisfies the assumption in \cite[Theorem 3.10]{Woe} since
 $\Gamma_{d}$ is of bounded degree. See Theorem \ref{hf}. 
 By Proposition \ref{psc}, for $\mu\in M_2$
 we have that
 \begin{align*}
 {\cal E}^{{\rm SRW}}(g\circ F_{d},g\circ F_{d})\asymp {\cal E}^{\mu}(g\circ F_{d},g\circ F_{d}).
 \end{align*}
 Thus $g\circ F_{d}\in{\cal F}^{\mu}$.
 By Proposition \ref{=}, $g\circ F_{d}$ has a limit along a path of the random walk driven by $\mu$ with probability 1,
 and the limiting function $v:\partial \Gamma\rightarrow\mathbb{R}$ belongs to $B_2(\mu)$.
 Since it is shown in \cite{Ka} that $(R_n)$ converges to a random element
 $R_{\infty}\in\partial\Gamma$ in the topology of the compactified space $\Gamma\cup\partial\Gamma$, the sequence
 $(F_{d}(R_n))$ also converges to $R_{\infty}$. 
 On the other hand,
 by the continuity of $u$ and the definition of $g$, for any sequence $(h_n)\subset\Gamma_{d}$ converging to
 a point $\eta\in\partial\Gamma$ we get that $\lim_{n\to\infty}g(h_n)=u(\eta)$.
 This observation together with the above argument implies that 
 $v(R_\infty)=\lim_{n\to\infty}g\circ F_{d}(R_n)=u(R_{\infty})$ $\bp^{\mu}$-{\it a.s.} 
Thus we get that $v=u\ \nu$-{\it a.s.},
 and this implies that $u\in B_2(\mu)$. \par
 For $v'\in C(\partial\Gamma)\cap B_2(\mu)$, 
 define its harmonic extension $Hv':\Gamma\rightarrow\mathbb{R}$ with respect to $\mu$ as in (\ref{he}).
 Then by Proposition \ref{=}, we have that $Hv'\in\mathbb{HD}(\mu)$ and ${\cal E}^{\mu}(Hv',Hv')=\mathcal{E}^{\partial\Gamma,\mu}(v',v')$.
 Let $\tilde{F}_{d}:V(\Gamma_{d})\rightarrow\Gamma$
 be a quasi-isometry such that $F_{d}\circ\tilde{F}_{d}$ and $\tilde{F}_{d}\circ F_{d}$
 are within bounded distance from $id_{\Gamma_{d}}$ and $id_{\Gamma}$ respectively.
 Since $F_{d}$ continuously 
  extends to the identity on $\partial\Gamma$,
 $\tilde{F}_{d}$ does so as well.
 Then we have that
 \begin{align*}
 {\cal E}^{\mu}(Hv',Hv')\asymp {\cal E}^{{\rm SRW}}(Hv',Hv')\asymp
 \|d(Hv'\circ \tilde{F}_{d})\|^2_{\ell_2(E(\Gamma_{d}))}<+\infty.
 \end{align*}
 By Theorem \ref{besov}, the function $Hv'\circ \tilde{F}_{d}$ has a limit along ${\cal H}_{d}$-almost
 every geodesics and the limiting function $u':\partial \Gamma\rightarrow\mathbb{R}$ belongs to $B_2(d)$.
 On the other hand, 
 by \cite[Lemma 2.2]{Ka}, 
  for any sequence $(g_n)\subset\Gamma$
 converging to a point $\eta\in\partial\Gamma$,
 we have that 
 $\lim_{n\to\infty}Hv'(g_n)=v'(\eta)$.
  Now $u'$ is the limit of $Hv'\circ \tilde{F}_{d}$ along ${\cal H}_{d}$-almost
 every geodesics,
 and $(\tilde{F}_{d}(g_n))$ converges to $\eta$ whenever $(g_n)$ converges to $\eta$.
 Hence we get that $u'=v'\ {\cal H}_{d}$-{\it a.e.,}
  and this implies the conclusion.
 \qed\par
 \end{proof}
 \medskip
 
 Now we wish to relate the two Besov spaces $B_2(d)$ and $B_2(d')$ for $d,d'\in J_{AR}(\partial\Gamma)$.
 To do so, we use
 a bijective linear map between them given in \cite{BP}, 
 and show that it coincides with the identity map on ${\cal C}$. 
 We now recall the construction of a Banach isomorphism between
 $(B_2(d)/\sim,\mathcal{E}^{\partial\Gamma,d})$ and $(B_2(d')/\sim,\mathcal{E}^{\partial\Gamma,d'})$ in \cite{BP}.
 Let $u\in B_2(d)$, and define $g:V(\Gamma_{d})\rightarrow\mathbb{R}$ as in (\ref{exx}).
 Now we define $\tilde{T}(d\to d'):B_2(d)
 \rightarrow
 B_2(d')
 $ as follows: $$\tilde{T}(d\to d')u:=I^{d'}(g\circ c_0)\in B_2(d').$$
 See \eqref{c0} for the definition of $c_0$.
 \begin{lem}\label{nap}
  For any $d,d'\in J_{AR}(\partial\Gamma)$,
  the linear map $\tilde{T}(d\to d'):B_2(d) \to B_2(d')$ introduced above satisfies
 $\tilde{T}(d\to d')|_{{\cal C}}={\rm Id}_{{\cal C}}$. Moreover, it induces a Banach isomorphism $T(d\to d')$ between $(B_2(d)/\sim,\mathcal{E}^{\partial\Gamma,d})$ and $(B_2(d')/\sim,\mathcal{E}^{\partial\Gamma,d'})$.
 \end{lem}
 \begin{proof}
  It is shown in \cite{BP} that the linear map $\tilde{T}(d\to d')$ induces a Banach isomorphism
 between $(B_2(d)/\sim,\mathcal{E}^{\partial\Gamma,d})$ and $(B_2(d')/\sim,\mathcal{E}^{\partial\Gamma,d'})$.
 Therefore, it suffices to prove that $\tilde{T}(d\to d')|_{{\cal C}}={\rm Id}_{{\cal C}}$.\par 
  Under the identification (\ref{coho}), we have that 
$N^*(g)=g\circ c_0$, where $c_0:V(\Gamma_{d'})\rightarrow V(\Gamma_{d})$
  is a quasi-isometry which continuously extends to the identity on $\partial\Gamma$. 
  See the third statement in Theorem \ref{hf}.
 When $u\in{\cal C}$, it is obvious that for any sequence $(h_n)\subset\Gamma_{d}$
 converging to $\eta\in\partial\Gamma$, we have that $\lim_{n\to\infty}g(h_n)=u(\eta)$.
 This implies that $\tilde{T}(d\to d')|_{\cal C}={\rm Id}_{\cal C}$.
 It is shown in \cite{BP} that the linear map $\tilde{T}(d\to d')$ induces a Banach isomorphism
 between $(B_2(d)/\sim,\mathcal{E}^{\partial\Gamma,d})$ and $(B_2(d')/\sim,\mathcal{E}^{\partial\Gamma,d'})$.
 \qed\end{proof}
 \medskip
 We next prove that for any $\mu,\mu'
 \in M_2$, there exists an isomorphism between the two Besov spaces
 $(\mathcal{E}^{\partial\Gamma,\mu},B_2(\mu))$ and $(\mathcal{E}^{\partial\Gamma,\mu'},B_2(\mu'))$ which coincides with the identity map on ${\cal C}$.
 Before proving this claim, we need some preparation.

 \begin{lem}\label{umu}
 \begin{description}
 \item[(1)] The functional space $({\cal E}^{\mu},\mathbb{HD}(\mu)/\sim)$ is a Hilbert space.
 \item[(2)] The linear map $\tilde{U}(\mu)|_{\mathbb{HD}(\mu)}:\mathbb{HD}(\mu)\to B_2(\mu)$ induces 
 a Banach isomorphism $U(\mu)$ between
 the two Hilbert spaces $(\mathbb{HD}(\mu)/\sim,{\cal E}^{\mu})$ and $(B_2(\mu)/\sim,\mathcal{E}^{\partial\Gamma,\mu})$.
 See Corollary \ref{new-new} for the definition of $\tilde{U}(\mu):\mathcal{F}^{\mu}\to B_2(\mu)$.
 \end{description}
  \end{lem}
  \begin{proof}
 We first prove the first claim.
 Recall that by Proposition \ref{RD},
  $\ell_2(\Gamma)$ is a closed subspace of the Hilbert space $({\cal F}^{\mu}/\sim,{\cal E}^{\mu})$.
 This fact together with the decomposition \eqref{royden}
 implies the result.\par
 We next prove the second claim. 
 By Corollary \ref{new-new} and the decomposition \eqref{royden}, $\tilde{U}(\mu)|_{\mathbb{HD}(\mu)}$ is bijective.
 Moreover, by Proposition \ref{=}, for $f\in\mathbb{HD}(\mu)$ we have that
 \begin{align*}
 {\cal E}^{\mu}(f,f)=\mathcal{E}^{\partial\Gamma,\mu}(\tilde{U}(\mu)f,\tilde{U}(\mu)f).
 \end{align*}
 Thus, $\tilde{U}(\mu)|_{\mathbb{HD}(\mu)}$ induces a Banach isomorphism between
 $(B_2(\mu)/\sim,\mathcal{E}^{\partial\Gamma,\mu})$ and $(\mathbb{HD}(\mu)/\sim,{\cal E}^{\mu})$.
  \qed
  \end{proof}
  \medskip
 We now construct a bijective linear map between $B_2(\mu)$ and $B_2(\mu')$.
 By Lemma \ref{umu}, we already have two bijective linear maps $\tilde{U}(\mu)|_{\mathbb{HD}(\mu)}:\mathbb{HD}(\mu)\to B_2(\mu)$ and $\tilde{U}(\mu')|_{\mathbb{HD}(\mu')}:\mathbb{HD}(\mu')\to B_2(\mu')$. Therefore, we wish to relate 
 $\mathbb{HD}(\mu)$ and $\mathbb{HD}(\mu')$. We define a linear map ${\it HD}(\mu\rightarrow\mu'):\mathbb{HD}(\mu)\rightarrow\mathbb{HD}(\mu')$ as follows:
 for $f\in\mathbb{HD}(\mu)$, let $f=f_1+f_2$ be its Royden decomposition with respect to $\mu'$
 , where $f_1\in\ell_2(\Gamma)$
 and $f_2\in\mathbb{HD}(\mu')$. Now we define $${\it HD}(\mu\rightarrow\mu')f:=f_2,$$
  and $\tilde{T}(\mu\rightarrow\mu'): B_2(\mu)\to B_2(\mu')$ by
 \begin{align}\label{def:tmumu}
 \tilde{T}(\mu\rightarrow\mu'):=\tilde{U}(\mu')
 \circ {\it HD}(\mu\rightarrow\mu')\circ (\tilde{U}(\mu)|_{\mathbb{HD}(\mu)})^{-1}
 \end{align}
 \begin{prop}\label{mumu}
 For any $\mu,\mu'\in M_2$, the linear map
 $\tilde{T}(\mu\rightarrow\mu'):
 B_2(\mu)\to B_2(\mu')$ constructed above
 satisfies $\tilde{T}(\mu\to\mu')|_{\cal C}={\rm Id}_{\cal C}$. Moreover,
 it induces a Banach isomorphism 
 $T(\mu\rightarrow\mu'):(B_2(\mu)/\sim,\mathcal{E}^{\partial\Gamma,\mu})\rightarrow(B_2(\mu')/\sim,\mathcal{E}^{\partial\Gamma,\mu'})$.
 \end{prop}
 \begin{proof}
 We first show that ${\it HD}(\mu\rightarrow\mu')$ induces an isomorphism between 
 $(\mathbb{HD}(\mu)/\sim,{\cal E}^{\mu})\rightarrow(\mathbb{HD}(\mu')/\sim,{\cal E}^{\mu'})$, which implies the second claim.
 Recall that by Proposition \ref{psc}, we have ${\cal F}^{\mu}={\cal F}^{\mu'}$
 and ${\cal E}^{\mu}(f,f)\asymp{\cal E}^{\mu'}(f,f)$ for $f\in{\cal F}^{\mu}={\cal F}^{\mu'}$.
 When ${\it HD}(\mu\rightarrow\mu')f=0$, we have $f\in\ell_2(\Gamma)\cap\mathbb{HD}(\mu)$,
 hence $f=0$.
 Thus ${\it HD}(\mu\rightarrow\mu')$ is injective.
 On the other hand, take $g\in\mathbb{HD}(\mu')$ arbitrarily. Let $g=g_1+g_2$ be its Royden decomposition
 with respect to $\mu$, where $g_1\in\ell_2(\Gamma)$ and $g_2\in\mathbb{HD}(\mu)$.
 Then $g_2=-g_1+g$, hence we have ${\it HD}(\mu\rightarrow\mu')g_2=g$. Therefore ${\it HD}(\mu\rightarrow\mu')$
 is surjective. Moreover, for $f\in\mathbb{HD}(\mu)$ we have
 \begin{align*}
 {\cal E}^{\mu}(f,f)=\min_{h\in\ell_2(\Gamma)}{\cal E}^{\mu}(f+h,f+h)&\asymp
 \min_{h\in\ell_2(\Gamma)}{\cal E}^{\mu'}(f+h,f+h)\\
 &={\cal E}^{\mu'}({\it HD}(\mu\rightarrow\mu')f,{\it HD}(\mu\rightarrow\mu')f).
 \end{align*}
 Thus, ${\it HD}(\mu\rightarrow\mu')$ induces an isomorphism between 
 $(\mathbb{HD}(\mu)/\sim,{\cal E}^{\mu})\rightarrow(\mathbb{HD}(\mu')/\sim,{\cal E}^{\mu'})$.\par
 We next show that $\tilde{T}(\mu\to\mu')|_{\cal C}={\rm Id}_{\cal C}$. Let $u\in{\cal C}$. By Lemma 2.2 in \cite{Ka}, 
 we have that for any sequence $(g_n)\subset\Gamma$ converging to $\eta\in\partial\Gamma$, 
 $\lim_{n\to\infty}Hu(g_n)=u(\eta)$.
 This implies that 
 $u=\tilde{U}(\mu)(Hu)$ and $u=\tilde{U}(\mu')(Hu)$.
 Let $Hu=h_1+h_2$ be the Royden decomposition with respect to $\mu'$
  , where $h_1\in\ell_2(\Gamma)$ and $h_2\in\mathbb{HD}(\mu')$.
  Then by Proposition \ref{=}, we have that
  \begin{align*}
  u&=\tilde{U}(\mu')(Hu)=\tilde{U}(\mu')h_2
  =\tilde{U}(\mu')\circ {\it HD}(\mu\rightarrow\mu')(Hu)\\
  &=\tilde{U}(\mu')\circ {\it HD}(\mu\rightarrow\mu')\circ (\tilde{U}(\mu)|_{\mathbb{HD}(\mu)})^{-1}u.
  \end{align*}
  Therefore, we get the conclusion.
 \qed\end{proof}
 \medskip
 We will give the main results of this section below. 
  
\begin{theo}\label{iso}
 For any $d\in J_{AR}(\partial \Gamma)$ and $\mu\in M_2$,
 there exist linear maps $\tilde{T}(d\rightarrow\mu):
 B_2(d)\rightarrow B_2(\mu)$ and 
 $\tilde{T}(\mu\rightarrow d):B_2(\mu)\rightarrow B_2(d)$
 with $\tilde{T}(\mu\to d)|_{\cal C}=\tilde{T}(d\to\mu)|_{\cal C}={\rm Id}_{\cal C}$
 which induce Banach isomorphisms
 $T(d\rightarrow\mu):
 (B_2(d)/\sim,\mathcal{E}^{\partial\Gamma,d})\rightarrow(B_2(\mu)/\sim,\mathcal{E}^{\partial\Gamma,\mu})$ and 
 $T(\mu\rightarrow d):(B_2(\mu)/\sim,\mathcal{E}^{\partial\Gamma,\mu})\rightarrow (B_2(d)/\sim,\mathcal{E}^{\partial\Gamma,d})$.
\end{theo}
\begin{proof} 
Take a probability measure $\mu'$ on $\Gamma$ with a finite support. 
It is shown in  \cite{BHM} that there exists a visual metric on $\partial\Gamma$ called the 
{\it Green visual metric} and denoted with $\rho(G^{\mu'})$ 
 which belongs to the Ahlfors-regular conformal gauge $J_{AR}(\partial\Gamma)$ and is such that
 $(\mathcal{E}^{\partial\Gamma,\mu'},B_2(\mu'))=(\mathcal{E}^{\partial\Gamma,\rho(G^{\mu'})},B_2(\rho(G^{\mu'})))$.
 See Corollary 1.2 and Section 3.2 in \cite{BHM}. 
 Now we choose $d\in J_{AR}(\partial\Gamma)$ and $\mu\in M_2$ arbitrarily.
 We define $\tilde{T}(d\rightarrow\mu):
 B_2(d)\rightarrow B_2(\mu)$ and 
 $\tilde{T}(\mu\rightarrow d): B_2(\mu)\rightarrow B_2(d)$ 
 by
 \begin{align*}
 \tilde{T}(d\rightarrow\mu)&:=\tilde{T}(\mu'\to\mu)\circ \tilde{T}(d\to \rho(G^{\mu'})),\\  
 \tilde{T}(\mu\rightarrow d)&:=\tilde{T}(\rho(G^{\mu'})\to d)\circ \tilde{T}(\mu\to\mu'),
 \end{align*}
 respectively.
 By Lemma \ref{nap} and Proposition \ref{mumu},
 it is obvious that the above two linear maps coincide with the identity on ${\cal C}$ and induce  Banach isomorphisms.
 \qed
\end{proof}
\medskip

\section{Besov spaces associated to random walks and the theory of Dirichlet forms}\label{besov-rw}
 In this section, we first prove that when the Ahlfors-regular conformal dimension of the 
 Gromov boundary $\partial\Gamma$ is strictly less than $2$,
 Besov spaces on $\partial\Gamma$ associated either to metrics $d\in J_{AR}(\partial\Gamma)$ and to
 random walks driven by $\mu\in M_2$
  give rise to regular Dirichlet forms on the boundary.
 Secondly, we will study a potential theoretic property of
 Hausdorff measures of metrics in $J_{AR}(\partial\Gamma)$ and harmonic measures of random walks on $\Gamma$.
 Specifically, we will prove that those Hausdorff measures and harmonic measures are smooth in a
 potential theoretic
 sense with respect to any regular Dirichlet form on the boundary given by the Besov spaces. 
 
 \subsection{Regularity of Besov spaces and smoothness of harmonic measures}\label{subsec:regu}
From now on, we will assume that
there exists a metric $d_0\in J_{AR}(\partial\Gamma)$ such that $q_0:=\dim(\partial\Gamma,d_0)<2$. 
In other words, we will assume that
 the Ahlfors-regular conformal dimension of $(\partial\Gamma,\rho_{\Gamma})$ is strictly less than 2. 
Let ${\bf Lip}_0$ be the set of Lipschitz functions with respect to $d_0$. 
By a straightforward computation, we can check that 
${\bf Lip}_0\subset B_2(d_0)$ under the assumption $q_0<2$.
By Lemma \ref{c}, this implies that 
\begin{align}\label{0=c}
{\bf Lip}_0\subset{\cal C},
\end{align}
which will be important in the proof of the regularity of Besov spaces.
 In Proposition \ref{mumu}, we also checked that isomorphisms between Besov 
spaces can be arranged in such a way that functions in ${\bf Lip}_0$ are invariant.
We now claim the regularity of Dirichlet forms associated to $d\in J_{AR}(\partial\Gamma)$ and
$\mu\in M_2$. 
\begin{theo}\label{reg}
Assume the Ahlfors-regular conformal dimension of $\partial\Gamma$ is strictly less than 2.
Then for any $d\in J_{AR}(\partial\Gamma)$ and any $\mu\in M_2$,
$(\mathcal{E}^{\partial\Gamma,d},B_2(d))$ and $(\mathcal{E}^{\partial\Gamma,\mu},B_2(\mu))$ are regular Dirichlet forms
 on $L^2(\partial\Gamma,{\cal H}_{d})$ and on $L^2(\partial\Gamma,\nu)$ respectively. 
\end{theo}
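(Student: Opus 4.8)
The plan is to verify, separately for $(E_\delta,B_2(\delta))$ and $(E_\mu,B_2(\mu))$, the three defining features of a regular Dirichlet form: that it is a closed symmetric form, that it is Markovian, and that it is regular, the last being where the hypothesis $q_0<2$ really bites. Symmetry, bilinearity and non-negativity are immediate from the defining double integrals once one recalls that the Na\"\i m kernel $\Theta^\mu=G^\mu(x^{-1}y)/\bigl(G^\mu(x)G^\mu(y)\bigr)$ is non-negative; the inclusions $B_2(\mu)\subset L^2(\partial\Gamma,\nu)$ (Proposition \ref{=}) and $B_2(\delta)\subset L^2(\partial\Gamma,\mathcal H_\delta)$ (Theorem \ref{besov}) ensure the forms live on the stated $L^2$ spaces; and closedness follows by applying Fatou's lemma to the double integral: if $u_n\to u$ in $L^2$ with $(u_n)$ an $E$-Cauchy sequence, then along an a.e.\ convergent subsequence $E(u-u_m,u-u_m)\le\liminf_n E(u_n-u_m,u_n-u_m)$, which forces $u\in B_2(\cdot)$ and $E_1(u-u_m,u-u_m)\to0$. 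For the Markovian property, $t\mapsto(0\vee t)\wedge1$ is $1$-Lipschitz, so $v:=(0\vee u)\wedge1$ satisfies $(v(\xi)-v(\eta))^2\le(u(\xi)-u(\eta))^2$ pointwise, whence $v\in L^\infty\subset L^2$ and $E(v,v)\le E(u,u)$.

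The substance is regularity. Since $\partial\Gamma$ is compact, $C_0(\partial\Gamma)=C(\partial\Gamma)$, and by Lemma \ref{c} the space $C(\partial\Gamma)\cap B_2(\delta)=C(\partial\Gamma)\cap B_2(\mu)$ equals the common set $\mathcal C$; so regularity means exactly that $\mathcal C$ is dense both in $\bigl(C(\partial\Gamma),\|\cdot\|_\infty\bigr)$ and in $\bigl(B_2(\cdot),E_1\bigr)$. For the first, a direct computation gives, for an $L$-Lipschitz function $u$ on $(\partial\Gamma,\delta_0)$, $E_{\delta_0}(u,u)\le L^2\int\int_{\partial\Gamma\times\partial\Gamma}\delta_0(\xi,\eta)^{2-2q_0}\,d\mathcal H_{\delta_0}(\xi)\,d\mathcal H_{\delta_0}(\eta)$, which by $q_0$-Ahlfors regularity is finite precisely because $q_0<2$; hence ${\bf Lip}_0\subset B_2(\delta_0)$, so ${\bf Lip}_0\subset\mathcal C$. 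As ${\bf Lip}_0$ is a subalgebra of $C(\partial\Gamma)$ containing the constants and separating points, Stone--Weierstrass gives $\|\cdot\|_\infty$-density.

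For the second density it suffices, by Theorem \ref{iso}, to treat the single case $\delta=\delta_0$ and transport. Indeed $\tilde T(\delta\to\delta_0)$ and $\tilde T(\mu\to\delta_0)$ restrict to the identity on $\mathcal C\supset{\bf Lip}_0$ and induce Banach isomorphisms of the quotients $\bigl(B_2(\cdot)/\!\sim,\sqrt{E(\cdot,\cdot)}\bigr)$; since ${\bf Lip}_0$ is stable under adding constants, $E$-seminorm density of ${\bf Lip}_0$ in $B_2(\delta_0)$ transports verbatim to $E$-seminorm density in every $B_2(\delta)$ and $B_2(\mu)$, and one upgrades this to $E_1$-density by normalizing the approximants to share the target's mean and invoking a Poincar\'e-type inequality $\|w-\bar w\|_{L^2}^2\le C\,E(w,w)$ on $\partial\Gamma$ (equivalently, the compact embedding $B_2(\cdot)\hookrightarrow L^2$), cf.\ Proposition \ref{pi}. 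The remaining core point is that ${\bf Lip}_0$ is $E_{\delta_0}$-dense in $B_2(\delta_0)$: one approximates $u\in B_2(\delta_0)$ at the scales $e^{-n}$ by a $\delta_0$-Lipschitz mollifier $u_n$ obtained by averaging $u$ against a fixed Lipschitz kernel of width $e^{-n}$, normalized by $\int_{\partial\Gamma}\psi_n(\xi,\eta)\,d\mathcal H_{\delta_0}(\eta)=1$, and shows $u_n\to u$ in $L^2$ (Lebesgue differentiation and Hardy--Littlewood bounds on the doubling space $(\partial\Gamma,\delta_0)$) and $E_{\delta_0}(u-u_n,u-u_n)\to0$. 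Alternatively one transports the problem to $\Gamma_{\delta_0}$ via Theorem \ref{besov}(3) and shows that the classes admitting locally constant (hence $\delta_0$-Lipschitz) boundary representatives — those represented by functions on $V(\Gamma_{\delta_0})$ that are eventually constant along each ray — are dense in $\ell_2H^1(J(\partial\Gamma))$.

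The hard part is exactly this last claim, that ${\bf Lip}_0$ is $E_{\delta_0}$-dense in $B_2(\delta_0)$: the estimate $E_{\delta_0}(u-u_n,u-u_n)\to0$ sits at the critical Besov exponent $s=q_0/2$, so it does not follow from crude splittings of the double integral and instead calls for the Littlewood--Paley/atomic machinery for Besov spaces on Ahlfors-regular metric measure spaces (and it is again the bound $q_0<2$, i.e.\ $s<1$, that makes ${\bf Lip}_0$ a legitimate dense class at all). A secondary difficulty is purely bookkeeping: the isomorphisms of Theorem \ref{iso} naturally act on the quotients $B_2(\cdot)/\!\sim$ with the $\sqrt{E}$-norm, in which the three reference measures $\mathcal H_\delta$, $\mathcal H_{\delta_0}$, $\nu$ play no role, so transporting density to the genuine Hilbert-space statement needed for regularity is what forces the auxiliary Poincar\'e inequality on $\partial\Gamma$ into the argument.
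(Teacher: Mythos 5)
Your overall architecture is the same as the paper's: ${\bf Lip}_0\subset B_2(\delta_0)$ because $q_0<2$ (your computation of $E_{\delta_0}(u,u)\le L^2\int\int\delta_0(\xi,\eta)^{2-2q_0}\,d{\cal H}_{\delta_0}d{\cal H}_{\delta_0}<\infty$ is exactly the point), Stone--Weierstrass gives uniform density of ${\cal C}$, and the $E_1$-density is obtained from $E$-seminorm density of ${\cal C}$ plus a Poincar\'e-type inequality $\|v-c\|_{L^2}^2\le CE(v,v)$. Your verification of closedness (Fatou along an a.e.\ convergent subsequence) and of the Markov property (the unit contraction is $1$-Lipschitz) is correct and standard; the paper does not even spell these out. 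Two small attributions: the Poincar\'e inequality you need is not really Proposition \ref{pi} but Lemma \ref{bpoin} for the ${\cal H}_\delta$-case and, for the $\nu$-case, the estimate (\ref{nu}) derived from Lemma \ref{ac} (exponential decay of $\mu^{*k}$ from non-amenability applied to the harmonic extension $Hv$); the paper proves the $\mu$-case directly this way rather than transporting it through $\delta_0$, though your transport via Theorem \ref{iso} would also work since the isomorphisms fix ${\cal C}$ pointwise.

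The genuine gap is the one you yourself flag as ``the hard part'': the $E_{\delta_0}$-seminorm density of ${\cal C}$ (or ${\bf Lip}_0$) in $B_2(\delta_0)$. You propose a mollification at scales $e^{-n}$ and assert $E_{\delta_0}(u-u_n,u-u_n)\to0$, but then concede that this ``calls for the Littlewood--Paley/atomic machinery'' without carrying it out; as written the central analytic ingredient of the whole theorem is asserted, not proved. The paper closes exactly this step by citation: \cite[Proposition 3.13]{Cos} supplies, for every $u\in B_2(\delta)$, a sequence $(w_n)\subset{\cal C}$ with $E_\delta(u-w_n,u-w_n)\to0$ (and, in the refined form used later in the proof of Theorem \ref{w}, with $0\le w_n\le u$ for $u\ge0$). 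So your proposal correctly localizes the crux and correctly identifies that $q_0<2$ is what makes ${\bf Lip}_0$ an admissible dense class, but it does not actually establish the density; to be complete you should either import Costea's result as the paper does or execute the maximal-function/dominated-convergence estimate for the mollifiers in full.
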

Before giving the proof, we introduce the following estimates which are reminiscent of the Poincar\'e inequality.
\begin{lem}\label{bpoin}
Let $d\in J_{AR}(\partial\Gamma)$.
 For $u\in B_2(d)$, let $g:V(\Gamma_{d})\rightarrow\mathbb{R}$ be the function defined in (\ref{exx}).
 Then, there exists a constant $C>0$ such that for any $u\in B_2(d)$ we have that
 \begin{align*}
 \|u-g(O)\|^2_{L^2(\mathcal{H}_{d})}\leq C\mathcal{E}^{\partial\Gamma,d}(u,u).
 \end{align*}
\end{lem}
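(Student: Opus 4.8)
The plan is to prove this Poincaré-type inequality by transferring the problem from the boundary $\partial\Gamma$ to the hyperbolic graph $\Gamma_\delta$, where the corresponding estimate is a statement about functions with $\ell_2$-summable coboundary. First I would recall from the proof of Theorem 3.4 in \cite{BP} (already invoked in the proof of Lemma \ref{c}) that for $u\in B_2(\delta)$ and $g$ defined by (\ref{exx}) one has $E_\delta(u,u)\asymp\|dg\|^2_{\ell_2(E(\Gamma_\delta))}$ together with $I^\delta(g)=u$ $\mathcal{H}_\delta$-a.e., i.e. $g_\infty = u$ a.e. So it suffices to bound $\|g_\infty - g(O)\|^2_{L^2(\mathcal{H}_\delta)}$ by a constant times $\|dg\|^2_{\ell_2(E(\Gamma_\delta))}$.

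The key step is a pointwise/averaged telescoping estimate along geodesic rays. For $\mathcal{H}_\delta$-a.e. $\xi\in\partial\Gamma$, pick a geodesic ray $O = x_0, x_1, x_2, \dots$ in $\Gamma_\delta$ converging to $\xi$; then $g_\infty(\xi) - g(O) = \sum_{n\geq 0}\big(g(x_{n+1}) - g(x_n)\big) = \sum_{n\geq0} dg(e_n)$ where $e_n = [x_n,x_{n+1}]$. The standard way to control the $L^2$ norm of such a sum is to integrate over $\xi$, writing $\|g_\infty - g(O)\|^2_{L^2(\mathcal{H}_\delta)}$ as an integral of $\big(\sum_n dg(e_n)\big)^2$ and then use the Ahlfors-regularity of $\mathcal{H}_\delta$ to see that each edge $e$ at graph-distance $n$ from $O$ is "seen" by a set of rays of $\mathcal{H}_\delta$-measure $\asymp e^{-nq}$, i.e. comparable to the size of the corresponding ball $B(x)$. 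Combining this with a Cauchy--Schwarz argument (splitting $dg(e_n) = dg(e_n)\cdot 1$ and using a geometric weight $\theta^n$ with $\theta<1$ to absorb the length of the ray, exactly as in the summability estimates in \cite{BP}) gives
\begin{align*}
\|g_\infty - g(O)\|^2_{L^2(\mathcal{H}_\delta)} \leq C\sum_{e\in E(\Gamma_\delta)} |dg(e)|^2 = C\|dg\|^2_{\ell_2(E(\Gamma_\delta))},
\end{align*}
where the geometric decay in the branching of the tree-like graph $\Gamma_\delta$ makes the constant finite independently of $u$. Then I would chain this with $\|dg\|^2_{\ell_2(E(\Gamma_\delta))}\asymp E_\delta(u,u)$ to conclude.

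I expect the main obstacle to be making the geometric bookkeeping on $\Gamma_\delta$ precise: one must control, for each vertex $x$ at level $n$, both the $\mathcal{H}_\delta$-measure of the shadow $B(x)$ (which is $\asymp e^{-nq}$ by Ahlfors-regularity, up to the bounded overlap of the covering $S_n$) and the number of geodesic rays / the branching, so that the double sum over pairs of edges appearing when one squares $\sum_n dg(e_n)$ can be dominated by the diagonal sum $\sum_e |dg(e)|^2$. This is precisely the type of estimate carried out in \cite{BP} in establishing the continuity of $I^\delta$ (Theorem \ref{besov}(2)) and the norm comparison in Theorem \ref{besov}(3), so in the write-up I would either cite those computations directly or reproduce the short Cauchy--Schwarz-with-geometric-weights argument; everything else is routine. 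An alternative, slicker route would be to avoid $\Gamma_\delta$ altogether and estimate $\|u - \bar u\|^2_{L^2(\mathcal{H}_\delta)}$ directly, where $\bar u = \mathcal{H}_\delta(\partial\Gamma)^{-1}\int u\,d\mathcal{H}_\delta$, via $\|u-\bar u\|^2_{L^2} \leq \tfrac12\mathcal{H}_\delta(\partial\Gamma)^{-1}\int\!\!\int (u(\xi)-u(\eta))^2 d\mathcal{H}_\delta(\xi)d\mathcal{H}_\delta(\eta)$ and then bounding $1 \leq C\,\delta(\xi,\eta)^{-2q}\cdot\delta(\xi,\eta)^{2q}$ using $\operatorname{diam}(\partial\Gamma,\delta)<\infty$; but since the statement is phrased in terms of $g(O)$ rather than the average, and since the paper has already set up the $g$-machinery, I would follow the first route and note afterwards that $g(O)$ may be replaced by $\bar u$ at the cost of another application of the same inequality.
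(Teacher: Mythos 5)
Your first route is exactly the paper's proof: the authors simply cite Theorem 3.1 and Lemma 3.2 of \cite{BP} (the continuity of $I^{\delta}$ and the norm comparison $\|dg\|_{\ell_2}\asymp E_{\delta}(u,u)^{1/2}$) and the telescoping-along-rays argument below Lemma 3.2 there, which is the Cauchy--Schwarz-with-geometric-weights computation you describe. What you underestimate is your ``alternative, slicker route'': it is in fact a complete and strictly simpler proof of the stated inequality, with no extra step needed to pass from the average $\bar u$ to $g(O)$. Indeed, by the construction of $\Gamma_{\delta}$ every geodesic ray from $O$ passes through $O$, so $B(O)=\partial\Gamma$ and $g(O)=\mathcal{H}_{\delta}(\partial\Gamma)^{-1}\int_{\partial\Gamma}u\,d\mathcal{H}_{\delta}=\bar u$ (the paper itself makes this identification in the proof of Proposition \ref{pi}). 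Then, since $\delta(\xi,\eta)\le D:=\mathrm{diam}(\partial\Gamma,\delta)<\infty$ gives $1\le D^{2q}\,\delta(\xi,\eta)^{-2q}$, one gets
\begin{align*}
2\,\mathcal{H}_{\delta}(\partial\Gamma)\,\|u-\bar u\|^2_{L^2(\mathcal{H}_{\delta})}
=\int\!\!\int_{\partial\Gamma\times\partial\Gamma}(u(\xi)-u(\eta))^2\,d\mathcal{H}_{\delta}(\xi)\,d\mathcal{H}_{\delta}(\eta)
\le D^{2q}E_{\delta}(u,u),
\end{align*}
which is the lemma with $C=D^{2q}/(2\mathcal{H}_{\delta}(\partial\Gamma))$, using only $u\in L^2(\partial\Gamma,\mathcal{H}_{\delta})$ and the boundedness of $(\partial\Gamma,\delta)$. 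This buys you a self-contained one-line argument that avoids the graph $\Gamma_{\delta}$ and the shadow/branching bookkeeping entirely; the only thing the paper's (BP-based) route buys in exchange is that it is the same machinery needed elsewhere (e.g.\ in Proposition \ref{pi}, where one genuinely needs to compare boundary averages against harmonic measures via the graph).
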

\begin{proof}
 The claim immediately follows from Theorem 3.1 and Lemma 3.2 in \cite{BP}. See the argument below Lemma 3.2
 in \cite{BP}.
\end{proof}
\begin{lem}\label{ac}
 Let $(R_k)_{k\geq0}$ be a random walk on 
 a non-amenable finitely generated group driven by a symmetric admissible probability measure $\mu$. 
 Then, there exist constants $a>1$ and $C>0$ such that for any $g\in{\cal F}^{\mu}$ we have that
 \begin{align}\label{pi; mu}
 \sum_{k=0}^{\infty}\be^{\mu}[(g(R_{k+1})-g(R_k))^2]a^k\leq C{\mathcal E}^\mu(g,g)\,.
 \end{align}
\end{lem}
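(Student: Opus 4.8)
The plan is to exploit non-amenability through a spectral gap for the transition operator restricted to a suitable space, and to lift this to an exponentially weighted energy estimate along paths. First I would recall that, since $\Gamma$ is non-amenable and $\mu\in M_2$ is symmetric and admissible, the Markov operator $P_\mu$ defined by $P_\mu f(x)=\sum_y\mu(x^{-1}y)f(y)$ has spectral radius $\|P_\mu\|_{\ell_2\to\ell_2}=\theta<1$ (Kesten's criterion). Write $Q=I-P_\mu$, so that $\Delta_\mu=-Q$ and ${\cal E}^\mu(g,g)=(Qg,g)_{\ell_2}$ whenever $g$ has finite support; moreover $Q\ge(1-\theta)I$ as a quadratic form on $\ell_2(\Gamma)$, which is exactly the Poincaré inequality (\ref{P}) already recorded in Proposition \ref{RD}. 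The key algebraic identity is that, for the constant-speed walk or directly for $(R_k)$,
\begin{align*}
\be_x[(g(R_{k+1})-g(R_k))^2]=\sum_{y}\mu^{*k}(x^{-1}y)\sum_z\mu(y^{-1}z)(g(z)-g(y))^2,
\end{align*}
so that $\sum_{k\ge0}\be[(g(R_{k+1})-g(R_k))^2]a^k=\sum_{k\ge0}a^k\,(P_\mu^k\,h)(id)$ where $h(y):=\sum_z\mu(y^{-1}z)(g(z)-g(y))^2\ge0$ and $\sum_y h(y)\,\mathbbm 1=2\,{\cal E}^\mu(g,g)$ when $g$ is finitely supported, i.e. $h\in\ell_1(\Gamma)$ with $\|h\|_{\ell_1}=2{\cal E}^\mu(g,g)$.

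Next I would estimate $\sum_{k\ge0}a^k(P_\mu^k h)(id)$. Since $h\ge0$ and $\mu$ is symmetric, $(P_\mu^k h)(id)=\sum_y\mu^{*k}(y)h(y)\le\|\mu^{*k}\|_{\ell_\infty}\|h\|_{\ell_1}$, and the standard Kesten-type bound gives $\|\mu^{*k}\|_{\ell_\infty}\le\mu^{*2\lfloor k/2\rfloor}(id)\le C\theta^k$ (using symmetry so that $\mu^{*2m}(id)=\|P_\mu^m\delta_{id}\|_{\ell_2}^2\le\theta^{2m}$). Hence choosing any $a$ with $1<a<1/\theta$ yields
\begin{align*}
\sum_{k\ge0}a^k\be[(g(R_{k+1})-g(R_k))^2]\le \|h\|_{\ell_1}\sum_{k\ge0}C(a\theta)^k=\frac{C}{1-a\theta}\,\|h\|_{\ell_1}=\frac{2C}{1-a\theta}\,{\cal E}^\mu(g,g),
\end{align*}
which is (\ref{pi; mu}) for $g\in C_0(\Gamma)$. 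To pass from $C_0(\Gamma)$ to general $g\in{\cal F}^\mu$, I would use Proposition \ref{RD}: write $g=g_0+g_{\mathbb{HD}}$ and approximate $g_0$ by $C_0(\Gamma)$ functions in ${\cal E}^\mu_1$-norm; for the harmonic part one approximates $g_{\mathbb{HD}}$ by $g_{\mathbb{HD}}\wedge n\vee(-n)$ truncations, or more simply one notes that both sides of (\ref{pi; mu}) are lower semicontinuous / continuous under ${\cal E}^\mu$-convergence along a suitable approximating sequence and that $C_0(\Gamma)$ is ${\cal E}^\mu_1$-dense in $\ell_2(\Gamma)$ while ${\cal F}^\mu=\ell_2(\Gamma)\oplus\mathbb{HD}(\mu)$ is ${\cal E}^\mu$-orthogonal; a monotone-convergence argument on the left-hand side (each term is a monotone limit of the finitely-supported analogues) combined with Fatou closes the gap.

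The main obstacle I anticipate is the passage from finitely supported $g$ to all of ${\cal F}^\mu$, since ${\cal F}^\mu$ is not the ${\cal E}^\mu_1$-closure of $C_0(\Gamma)$ (it contains the whole harmonic space $\mathbb{HD}(\mu)$, which may be infinite-dimensional). The clean way around this is to avoid truncation subtleties by proving the inequality first for $g$ with ${\cal E}^\mu(g,g)<\infty$ that are bounded, using that $h(y)=\sum_z\mu(y^{-1}z)(g(z)-g(y))^2$ then still lies in $\ell_1(\Gamma)$ with $\|h\|_{\ell_1}=2{\cal E}^\mu(g,g)$ by Tonelli, so the operator computation above goes through verbatim without any finite-support hypothesis; the boundedness of $g$ is only used to justify interchanging sums, and even that can be dropped since everything in sight is non-negative. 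Thus in fact the estimate holds for every $g\in{\cal F}^\mu$ directly, with $a$ any number strictly between $1$ and $1/\theta$ where $\theta<1$ is the spectral radius of $\mu$; non-amenability enters solely through $\theta<1$, which is also why the parenthetical remark about general non-amenable groups is true.
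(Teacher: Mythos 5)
Your proof is correct and follows essentially the same route as the paper's: both rest on the uniform exponential decay $\sup_x\mu^{*k}(x)\le C\theta^k$ coming from non-amenability (you via Kesten's spectral-radius criterion, the paper via the linear isoperimetric inequality and \cite{Woe}), combined with the identity $\be[(g(R_{k+1})-g(R_k))^2]=\sum_x\mu^{*k}(x)\sum_y\mu(x^{-1}y)(g(y)-g(x))^2\le\|\mu^{*k}\|_{\ell_\infty}\cdot 2{\cal E}^\mu(g,g)$ and a geometric series. The passage from finitely supported $g$ to all of ${\cal F}^\mu$ that you worry about is indeed a non-issue, for exactly the reason you give at the end (all terms are non-negative, so Tonelli justifies every interchange and $\|h\|_{\ell_1}=2{\cal E}^\mu(g,g)$ holds for any $g$ of finite energy); the paper does not even raise it.
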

\begin{proof}
Since every 
non-amenable Cayley graph satisfies a linear isoperimetric inequality, by Theorem 10.3 in \cite{Woe},
 there exists a constant $c>0$ such that $\mu^{*k}(id)\leq e^{-ck}$ for any $k\in\bn$.
 Moreover, using the elementary inequality
 $\mu^{*k}(x)\leq\sqrt{\mu^{*2k}(id)}$, we have that
 $\mu^{*k}(x)\leq e^{-ck}$ for any $x\in\Gamma$ and any $k\in\bn$.
 On the other hand, we have that
\begin{align*}
\sum_{k}\be^{\mu}[(g(R_{k+1})-g(R_k))^2]a^k=\sum_ka^k\sum_x\mu^{*k}(x)\sum_{y}\mu(x^{-1}y)(g(y)-g(x))^2.
\end{align*}
Combining this formula with the exponential decay of $\{\mu^{*k}(x)\}_{k\geq1}$,
 we get the desired estimate  for sufficiently small $a>1$ and some constant $C>0$.
 \qed
\end{proof}
\medskip

{\bf Proof of Theorem \ref{reg}.}
Since ${\bf Lip}_0\subset{\cal C}$, the set ${\cal C}$ separates points. Thus, ${\cal C}$ is $\|\cdot\|_{\infty}$-dense
in $C(\partial\Gamma)$ by the Stone-Weierstrass theorem. 
Therefore, we only need to show that ${\cal C}$ is
 dense both in $(B_2(d),
 (\mathcal{E}^{\partial\Gamma,d}_1)^{1/2})$
 and $(B_{2}(\mu),(\mathcal{E}^{\partial\Gamma,\mu}_1)^{1/2})$.
  We first prove the claim for $d\in J_{AR}(\partial\Gamma)$.
 By \cite[Proposition 3.13]{Cos}, 
 for any $u\in B_2(d)$ there exists a sequence $(w_n)\subset{\cal C}$ such that
 $\mathcal{E}^{\partial\Gamma,d}(u-w_n,u-w_n)\rightarrow0$. Since $\mathcal{E}^{\partial\Gamma,d}(v-c',v-c')=\mathcal{E}^{\partial\Gamma,d}(v,v)$ for any $v\in B_2(d)$
 and $c'\in\mathbb{R}$, it suffices to show that there exists a constant $c_n\in\mathbb{R}$ such that
 \begin{align}\label{c_n}
 \lim_{n\to\infty}\|u-w_n-c_n\|_{L^2({\cal H}_{d})}=0.
 \end{align}
  By Lemma \ref{bpoin}, there exists a constant 
 $C>0$ such that for any $v\in B_2(d)$
 \begin{align*}
 \|v-c(v)\|_{L^2({\cal H}_{d})}^2\leq C\mathcal{E}^{\partial\Gamma,d}(v,v),\ \ {\rm where}\ \ c(v)=\dfrac{1}{\mathcal{H}_d(\partial\Gamma)}\int_{\partial\Gamma}v(\xi)d\mathcal{H}_d(\xi).
 \end{align*}
 Choosing $c_n:=c(u-w_n)$,
 the above inequality together with \eqref{c_n} implies that ${\cal C}$ is
 dense in $(B_2(d),(\mathcal{E}^{\partial\Gamma,d}_1)^{1/2})$ for $d\in J_{AR}(\partial\Gamma)$. 
 \par
  We next prove the claim for $\mu\in M_2$. Take $v\in B_2(\mu)$
  and let $Hv$ be its harmonic extension with respect to $\mu$ as in (\ref{he}).
  By applying Lemma \ref{ac} to $Hv$, we get that there exist constants $a>1$ and $C>0$ such that
 \begin{align*}
 \sum_k\be^{\mu}[(Hv(R_{k+1})-Hv(R_k))^2]a^k\leq C{\mathcal E}^\mu(Hv,Hv)=C \mathcal{E}^{\partial\Gamma,\mu}(v,v).
 \end{align*}
    Since the limit of $Hv$ along a path of $(R_n)$ coincides with $v$, 
    by the Cauchy-Schwarz inequality we get 
 we obtain that 
  \begin{align}\label{cs-ineq}
   \|v-Hv(id)\|_{L^2(\nu)}^2
   &=\be^{\mu}[(v(R_{\infty})-Hv(id))^2]
   \nonumber\\
  &=\be^{\mu}\left[ \left(\sum_{k=0}^{\infty}(Hv(R_{k+1})-Hv(R_k))\right)^2\right]\nonumber\\
   &=\be^{\mu}\left[ \left(\sum_{k=0}^{\infty}(Hv(R_{k+1})-Hv(R_k))a^{k/2}\cdot a^{-k/2}\right)^2\right]\nonumber\\
&\leq \left(\sum_{k=0}^{\infty}a^{-k}\right)\cdot\sum_{k=0}^{\infty}\be^{\mu}\left[(Hv(R_{k+1})-Hv(R_k))^2\right]a^k\nonumber\\
  &\leq C \sum_{k=0}^{\infty}\be^{\mu}[(Hv(R_{k+1})-Hv(R_k))^2]a^k.
  \end{align}
  Therefore, we get that
  \begin{align}\label{nu}
  \|v-Hv(id)\|_{L^2(\nu)}^2\leq C\mathcal{E}^{\partial\Gamma,\mu}(v,v).
  \end{align}
  Thus, we can get the density of $\cal C$ in $(B_{2}(\mu),(\mathcal{E}^{\partial\Gamma,\mu}_1)^{1/2})$ 
 by a similar argument as for the density in $(B_2(d), (\mathcal{E}^{\partial\Gamma,d}_1)^{1/2})$ .
 \qed

\subsection{Smooth measures and measures of finite energy integral}
In what follows, we will study some potential theoretic property of Hausdorff measures associated to
the Ahlfors-regular conformal gauge $J_{AR}(\partial\Gamma)$ and harmonic measures associated to $M_2$.
 In this subsection, we give several general definitions about measures and potential theory of Dirichlet forms.
   We will explain later their probabilistic interpretation,
 especially how those measures arise in the study of symmetric Markov processes and their time changes. 
\\
 
 Let $E$ be a locally compact Hausdorff space, and $m$ be a positive Radon measure on $E$.
 Assume that we have a regular Dirichlet form $({\cal E},{\cal F})$ on $L^2(E,m)$.
 \begin{defn}
 For an open subset $U\subseteq E$, we define
 \begin{align*}
 L_U:=\{u\in{\cal F};\ u\geq1\ m\mathchar`-{\rm a.e.\ on}\ U\},
 \end{align*}
 and
 \begin{align*}
 {\rm Cap}(U):=\begin{cases}
 \inf_{u\in L_U}{\cal E}_1(u,u),\ \ {\rm if}\ L_U\neq\emptyset\\
 \infty,\ \ \ \ \ \ \ \ \ \ \ \ \ \ \ \ \ \ {\rm if}\ L_U=\emptyset.
 \end{cases}
 \end{align*}
 For any subset $A\subseteq E$, we define
 \begin{align*}
 {\rm Cap}(A)=\inf_{U:{\it open}, A\subseteq U}{\rm Cap}(U).
 \end{align*}
 The value of ${\rm Cap}(A)$ is called (1-)capacity of $A$.
 \end{defn}
 \begin{defn}\label{sm-measure}
 Let $\kappa$ be a positive Borel measure on $E$. We say that $\kappa$ is smooth 
 with respect to a regular Dirichlet form $(\ce,\cf)$ 
 when the following conditions
 are satisfied.
 \begin{description}
 \item[(1)] $\kappa(B)=0$ whenever ${\rm Cap}(B)=0$ and
 \item[(2)] there exists an increasing sequence $(C_n)$ of closed subsets of $E$ such that
 \begin{align*}
 \kappa(C_n)&<\infty\ \ {\rm for\ any}\ n\in\mathbb{N}\ {\rm and}\\
 \lim_{n\to\infty}{\rm Cap}(K\setminus C_n)&=0\ \ {\rm for\ any\ compact\ subset}\ K\subseteq E.
 \end{align*}
 \end{description}
 \end{defn}
 \begin{remark}\label{rem-cpt}
 In what follows, we will choose the Gromov boundary 
 $\partial\Gamma$ as the state space $E$.
 Since $\partial\Gamma$ is compact, the second condition of smoothness is always satisfied in our framework.
 \end{remark}
  \begin{defn}
 Let $\kappa$ be a positive Radon measure on $E$. We say that $\kappa$ is of {\it finite energy integral}
 with respect to a regular Dirichlet form $(\ce,\cf)$ 
 if there exists a constant $C>0$ such that for any $v\in{\cal F}\cap C_0(E)$ we have that
 \begin{align*}
 \int_E|v(x)|d\kappa(x)\leq C\sqrt{{\cal E}_1(v,v)}.
 \end{align*}
 Note that by Riesz's representation theorem, a positive Radon measure $\kappa$ on $E$ is of finite energy integral
 with respect to a regular Dirichlet form $(\ce,\cf)$ 
 if and only if for each $\alpha>0$, there exists a unique function $U_{\alpha}\kappa\in {\cal F}$ such that
 \begin{align*}
 \int_Ev(x)d\kappa(x)={\cal E}_{\alpha}(U_{\alpha}\kappa,v)
 \end{align*}
 for any $v\in{\cal F}\cap C_0(E)$.
  \end{defn}
  Measures of finite energy integral are known to form a subclass of smooth measures.
  \begin{prop}\cite[Section 2.2]{FOT}
 Any positive Radon measure $\kappa$ which is of finite energy integral with respect to a regular Dirichlet form
 $({\cal E},{\cal F})$ is smooth with respect to $({\cal E},{\cal F})$. 
  \end{prop}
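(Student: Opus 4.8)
The plan is to reduce the proposition to a single inequality relating $\kappa$ to capacity, namely
\begin{align*}
\kappa(A)\ \le\ C\sqrt{{\rm Cap}(A)}\qquad\text{for every Borel set }A\subseteq E,
\end{align*}
where $C:=\sqrt{{\cal E}_1(U_1\kappa,U_1\kappa)}$ and $U_1\kappa\in{\cal F}$ is the potential associated to $\kappa$ with $\alpha=1$, whose existence is guaranteed by Riesz's representation theorem exactly as recorded in the definition of a measure of finite energy integral. Granting this inequality, condition (1) of smoothness is immediate, since ${\rm Cap}(B)=0$ forces $\kappa(B)=0$. Condition (2) is then essentially free: in the setting of this paper $E=\partial\Gamma$ is compact, so one may take $C_n\equiv\partial\Gamma$ (as pointed out in the remark following the definition of smoothness) and $\kappa(\partial\Gamma)<\infty$ because $\kappa$ is Radon; in the general locally compact $\sigma$-compact case one exhausts $E$ by an increasing sequence of compact sets $C_n$ whose interiors cover $E$, so that any compact $K$ satisfies $K\setminus C_n=\emptyset$, hence ${\rm Cap}(K\setminus C_n)=0$, for all large $n$.

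To prove the capacity inequality I would first treat a compact set $K$. Let $\phi\in{\cal F}\cap C_0(E)$ satisfy $\phi\ge0$ and $\phi\ge1$ on $K$. Then, using the defining identity ${\cal E}_1(U_1\kappa,\phi)=\int_E\phi\,d\kappa$ and the Cauchy--Schwarz inequality for the Hilbert inner product ${\cal E}_1$ on ${\cal F}$,
\begin{align*}
\kappa(K)\ \le\ \int_E\phi\,d\kappa\ =\ {\cal E}_1(U_1\kappa,\phi)\ \le\ \sqrt{{\cal E}_1(U_1\kappa,U_1\kappa)}\,\sqrt{{\cal E}_1(\phi,\phi)}\ =\ C\sqrt{{\cal E}_1(\phi,\phi)}.
\end{align*}
Taking the infimum over all such $\phi$ and invoking the fact that, for compact $K$, ${\rm Cap}(K)$ equals the infimum of ${\cal E}_1(\phi,\phi)$ over $\phi\in{\cal F}\cap C_0(E)$ with $\phi\ge1$ on a neighbourhood of $K$ (see \cite[Section 2.1]{FOT}), one obtains $\kappa(K)\le C\sqrt{{\rm Cap}(K)}$. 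Passing to a general Borel set $A$ is then routine: ${\rm Cap}$ is monotone, so ${\rm Cap}(K)\le{\rm Cap}(A)$ for every compact $K\subseteq A$, and since $\kappa$ is a Radon measure it is inner regular, whence $\kappa(A)=\sup\{\kappa(K)\,;\,K\subseteq A\ \text{compact}\}\le C\sqrt{{\rm Cap}(A)}$.

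The only genuine ingredient beyond Cauchy--Schwarz, monotonicity of capacity, and inner regularity of a Radon measure is the representation of ${\rm Cap}(K)$ for compact $K$ in terms of continuous functions in ${\cal F}$; this is precisely where the regularity of $({\cal E},{\cal F})$ enters, as it is what makes ${\cal F}\cap C_0(E)$ large enough to approximate equilibrium potentials in the ${\cal E}_1$-norm. This is also the step one must handle with care: the alternative argument through quasi-continuous versions of functions in ${\cal F}$ (the route taken in \cite{FOT}) is tempting but risks circularity, because such versions are only determined up to sets of zero capacity, and it is exactly the fact that $\kappa$ ignores those sets that is under proof. I would therefore quote the capacity representation from \cite[Sections 2.1--2.2]{FOT} and assemble the elementary pieces above.
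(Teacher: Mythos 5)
The paper offers no proof of this proposition; it is quoted directly from \cite[Section 2.2]{FOT}, and your argument is essentially the standard one given there (Lemma 2.2.3 of that reference): bound $\kappa(K)\le{\cal E}_1(U_1\kappa,\phi)\le C\sqrt{{\cal E}_1(\phi,\phi)}$ for $\phi\in{\cal F}\cap C_0(E)$ with $\phi\ge1$ on the compact set $K$, and then invoke the regularity-based representation of ${\rm Cap}(K)$ by such $\phi$, thereby avoiding any appeal to quasi-continuous versions. Your proof is correct; the only point worth noting is that for condition (1) you need inner regularity of $\kappa$ only on \emph{open} sets (which is part of the definition of a Radon measure), since ${\rm Cap}(A)=0$ is by definition witnessed by open supersets of small capacity, so the stronger claim $\kappa(A)=\sup_{K\subseteq A}\kappa(K)$ for arbitrary Borel $A$ is not actually required.
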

  
Now we introduce the following notions concerning capacity and smoothness of measures with respect to
regular Dirichlet forms associated to $d\in J_{AR}(\partial\Gamma)$ and $\mu\in M_2$.
\begin{defn}\label{SS}
Assume the Ahlfors-regular conformal dimension of $\partial\Gamma$ is strictly less than 2.
\begin{description}
\item[(1)]Thanks to Theorem \ref{reg}, for any $d\in J_{AR}(\partial\Gamma)$ and any $\mu\in M_2$,
 we can define 
 ${\cal S}(\partial \Gamma,d)$ $({\cal S}(\partial \Gamma,\mu)$, resp.$)$ as the collection of all smooth measures
 with respect to the regular Dirichlet form $(\mathcal{E}^{\partial\Gamma,d},B_2(d))$
 on $L^2(\partial\Gamma,{\cal H}_{d})$ $((\mathcal{E}^{\partial\Gamma,\mu},B_2(\mu))$
 on $L^2(\partial\Gamma,\nu)$, resp.$)$.
 \item[(2)]Similarly, we define ${\cal S}_0(\partial \Gamma,d)$ $({\cal S}_0(\partial \Gamma,\mu)$, resp.$)$ as
 the collection of all measures
 of finite energy integral with respect to the regular Dirichlet form $(\mathcal{E}^{\partial\Gamma,d},B_2(d))$
 on $L^2(\partial\Gamma,{\cal H}_{d})$ $((\mathcal{E}^{\partial\Gamma,\mu},B_2(\mu))$
 on $L^2(\partial\Gamma,\nu)$, resp.$)$.
 \item[(3)]We also define $0(\partial \Gamma,d)$ $(0(\partial \Gamma,\mu)$, resp.$)$
 as the collection of all subsets of $\partial\Gamma$
 with zero capacity
  with respect to the regular Dirichlet forms $(\mathcal{E}^{\partial\Gamma,d},B_2(d))$
 on $L^2(\partial\Gamma,{\cal H}_{d})$ and \\ 
 $(\mathcal{E}^{\partial\Gamma,\mu},B_2(\mu))$
 on $L^2(\partial\Gamma,\nu)$.
 \end{description}
\end{defn}

\subsection{Poincar\'e-type inequalities and sets of $0$ capacity} 

Below we consider two random walks with respective driving measures $\mu$ and $\mu'$ 
(always in $M_2$) and respective harmonic measures $\nu$ and $\nu'$. 
Now recall that by Theorem \ref{iso}, for any $\mu\in M_2$, $d\in J_{AR}(\partial\Gamma)$ and any $u\in{\cal C}$, 
$\mathcal{E}^{\partial\Gamma,\mu}(u,u)$ and $\mathcal{E}^{\partial\Gamma,d}(u,u)$ are comparable up to multiplicative constants.
\begin{prop}\label{pi}
Let $\mu,\mu'\in M_2$ and $d\in J_{AR}(\partial\Gamma)$.
Denote by $\nu$ and $\nu'$ the harmonic measures of the random walks driven by $\mu$ and $\mu'$ respectively.
Then, there exists a constant $C>0$ such that
for any $u\in {\cal C}$, we have that
\begin{align*}
\left(\int_{\partial\Gamma} u\, d\nu-\int_{\partial\Gamma}  u\, d\nu'\right)^2&\leq C\mathcal{E}^{\partial\Gamma,\mu}(u,u),\ {\it and}\\
\left(\int_{\partial\Gamma} u\, d\nu-\mathcal{H}_{d}(\partial\Gamma)^{-1}\int_{\partial\Gamma}  u\, d{\mathcal H}_d\right)^2&\leq C\mathcal{E}^{\partial\Gamma,\mu}(u,u).
\end{align*}
\end{prop}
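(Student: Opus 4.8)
The plan is to exploit the probabilistic representation of elements of $\mathcal C$ as boundary limits of harmonic functions, together with the $L^2$-boundedness of the associated martingales, exactly as in the proofs of Theorem \ref{reg} and Lemma \ref{ac}. Fix $u\in\mathcal C$. By Lemma \ref{c}, $u\in B_2(\mu)\cap B_2(\mu')$ and we may form the harmonic extensions $Hu$ with respect to $\mu$ and, say, $H'u$ with respect to $\mu'$. The point is that $\int_{\partial\Gamma}u\,d\nu=Hu(id)$ and $\int_{\partial\Gamma}u\,d\nu'=H'u(id)$, so the left-hand side of the first inequality is $(Hu(id)-H'u(id))^2$. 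I would first treat the simpler situation where we compare $\nu$ to $\nu_x$, the harmonic measure of the $\mu$-walk started from a neighbour $x$ of $id$: then $Hu(id)-Hu(x)=\mathbb E^\mu_{id}[u(R_\infty)]-\mathbb E^\mu_x[u(R_\infty)]$ is a single martingale increment, and $(Hu(id)-Hu(x))^2\le \mathbb E^\mu[(Hu(R_1)-Hu(R_0))^2]\le C\,\mathcal E^\mu(Hu,Hu)=C\,E_\mu(u,u)$ by Lemma \ref{ac} (with $a=1$, i.e.\ just the $k=0$ term) and Proposition \ref{=}.

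For the genuine comparison between two different driving measures $\mu$ and $\mu'$, I would use a coupling / interpolation argument. Since both $\nu$ and $\nu'$ are harmonic measures of random walks on the same group $\Gamma$ and both $Hu$ and $H'u$ have the \emph{same} boundary value $u$ ($\nu$- and $\nu'$-a.e., by Lemma \ref{c}), the difference $Hu-H'u$ lies in $\mathcal F^\mu=\mathcal F^{\mu'}$ (Proposition \ref{psc}) and its boundary limit along the $\mu$-walk vanishes a.s.; by Lemma \ref{alp} this forces $Hu-H'u\in\ell_2(\Gamma)$. Hence $Hu(id)-H'u(id)=(Hu-H'u)(id)$, and I would bound this pointwise value of an $\ell_2(\Gamma)$-function by its $\mathcal E^\mu$-energy using the Poincaré inequality (\ref{P}) from Proposition \ref{RD}: $\|Hu-H'u\|_{\ell_2(\Gamma)}^2\le C\,\mathcal E^\mu(Hu-H'u,Hu-H'u)$, and $|(Hu-H'u)(id)|\le \|Hu-H'u\|_{\ell_2(\Gamma)}$. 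Finally $\mathcal E^\mu(Hu-H'u,Hu-H'u)\lesssim \mathcal E^\mu(Hu,Hu)+\mathcal E^\mu(H'u,H'u)\asymp E_\mu(u,u)+E_{\mu'}(u,u)\asymp E_\mu(u,u)$, the last comparison being Theorem \ref{iso} (comparability of $E_\mu(u,u)$ and $E_{\mu'}(u,u)$ on $\mathcal C$). This gives the first inequality.

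For the second inequality, involving $\int_{\partial\Gamma}u\,d\mathcal H_\delta$, I would argue analogously on the geometric side: with $g:V(\Gamma_\delta)\to\mathbb R$ the averaged function (\ref{exx}), one has $\int_{\partial\Gamma}u\,d\mathcal H_\delta$ close to $g(O)$ (up to renormalising $\mathcal H_\delta$ to a probability measure, or by absorbing the difference into the energy via Lemma \ref{bpoin}, since $\|u-g(O)\|_{L^2(\mathcal H_\delta)}^2\le C E_\delta(u,u)$). Then by Theorem \ref{iso} (identifying $g\circ F_\delta$ with an element of $\mathbb{HD}(\mu)$ up to $\ell_2(\Gamma)$) one relates $g(O)$ to $Hu(id)=\int u\,d\nu$ through a bounded-distance quasi-isometry, and the difference $\int u\,d\nu-g(O)$ is again controlled by $E_\mu(u,u)$ via the same $\ell_2(\Gamma)$-Poincaré mechanism. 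Concretely, I would apply the estimate (\ref{nu}) from the proof of Theorem \ref{reg}, namely $\|v-g(O)\|_{L^2(\nu)}^2\le C E_\mu(v,v)$ with $v=u$, and combine it with Lemma \ref{bpoin}, Jensen's inequality, and the comparability $E_\mu(u,u)\asymp E_\delta(u,u)$ on $\mathcal C$ from Theorem \ref{iso}.

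The main obstacle I anticipate is the bookkeeping around identifying the difference $Hu-H'u$ (or $Hu-g\circ F_\delta$) as an $\ell_2(\Gamma)$-function and then bounding its value at $id$: one must be careful that the Royden decompositions with respect to $\mu$ and $\mu'$ interact correctly (this is where Proposition \ref{psc}, Lemma \ref{alp}, and the decomposition (\ref{royden}) are essential), and that the various multiplicative constants coming from Theorem \ref{iso} depend only on $\mu,\mu',\delta$ and not on $u$. The analytic core — bounding a pointwise value by energy — is routine given the linear isoperimetric inequality (\ref{P}), so the real work is purely organisational.
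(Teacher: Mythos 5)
Your proposal is correct, but it reaches the two estimates by a genuinely different mechanism than the paper. The paper's proof telescopes $u(R_\infty)-g\circ F_\delta(id)$ along the trajectory of the $\mu$-walk, applies Cauchy--Schwarz with the exponential weights $a^{k}$, and invokes Lemma \ref{ac} (exponential decay of return probabilities) to get $\|u-g(O)\|_{L^2(\nu)}^2\leq C E_\delta(u,u)$; the second inequality then follows by Jensen, and the first is deduced from the second by going through $\int u\,d\mathcal{H}_\delta$ and the triangle inequality. You instead prove the first inequality directly: you observe that $Hu-H'u$ has vanishing boundary limit along the $\mu$-walk (since $u$ is continuous and, by Lemma 2.2 of \cite{Ka}, both harmonic extensions extend continuously to $u$ on $\partial\Gamma$), so Lemma \ref{alp} places $Hu-H'u$ in $\ell_2(\Gamma)$, and the pointwise value at $id$ is then controlled by the $\ell_2$-Poincar\'e inequality (\ref{P}) together with the orthogonality of the Royden decomposition, Proposition \ref{psc} and Theorem \ref{iso}; the same mechanism applied to $g\circ F_\delta-Hu\in\ell_2(\Gamma)$ gives the second inequality. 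Both arguments ultimately rest on non-amenability (the paper through the spectral radius bound in Lemma \ref{ac}, you through the isoperimetric inequality behind (\ref{P})), but your route avoids the weighted path-sum entirely and only yields the comparison of the two barycenters rather than the stronger $L^2(\nu)$-bound (\ref{pi; corrected}), which the paper reuses later in the proof of Theorem \ref{s}; this is worth keeping in mind, though it is not needed for the statement as given. The only points requiring care, which you correctly flag, are the extension of (\ref{P}) from $C_0(\Gamma)$ to all of $\ell_2(\Gamma)$ (legitimate by density, and used elsewhere in the paper, e.g.\ Proposition \ref{tau}) and the uniformity in $u$ of the implied constants, which holds since all comparabilities come from Proposition \ref{psc}, Theorem \ref{iso} and (\ref{P}).
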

\begin{proof}
\if0
Let $u\in {\cal C}$ and define $g:V(\Gamma_{d})\rightarrow\mathbb{R}$ as in (\ref{exx}). 
Then by Lemma \ref{bpoin}, we get that 
\begin{align}\label{pi1}
\left(g(O)-\int u\, d{\mathcal H}_d\right)^2\leq C\mathcal{E}^{\partial\Gamma,d}(u,u).
\end{align}
for sufficiently large $C>0$.\fi
 Let $u\in\mathcal{C}$ and $F_{d}:\Gamma\rightarrow V(\Gamma_{d})$
 be a quasi-isometry which continuously extends to the identity on $\partial\Gamma$.
 Moreover, we will assume that $F_{d}(id)=O$ without loss of generality.
 Then if we define a function $g:V(\Gamma_{d})\to\mathbb{R}$ as in \eqref{exx}, we have $g\circ F_{d}\in {\cal F}^{\mu}$. 
   Since the limit of $g\circ F_{d}$ along a path of $(R_n)$ coincides with $u$, 
   by the same argument as in \eqref{cs-ineq}, we get that
   \begin{align*}
    \|u-g(O)\|_{L^2(\nu)}^2=
   \|u-g\circ F_{d}(id)\|_{L^2(\nu)}^2&=\be^{\mu}[(u(R_{\infty})- g\circ F_{d}(id))^2]\\
   &\leq  C\sum_{k=0}^{\infty}\be^{\mu}[(g\circ F_{d}(R_{k+1})-g\circ F_{d}(R_k))^2]a^k.
  \end{align*}
  By Lemma \ref{ac}, we get that
  \begin{align*}
  \|u-g(O)\|_{L^2(\nu)}^2\leq C{\cal E}^{\mu}(g\circ F_{d},g\circ F_{d}).
  \end{align*}
  Since 
  \begin{align*}
  {\cal E}^{\mu}(g\circ F_{d},g\circ F_{d})\asymp \|dg\|^2_{\ell_2(E(\Gamma_{d}))} \asymp \mathcal{E}^{\partial\Gamma,d}(u,u),
  \end{align*}
  we get that
  \begin{align}\label{pi; corrected}
  \|u-g(O)\|_{L^2(\nu)}^2\leq C\mathcal{E}^{\partial\Gamma,d}(u,u).
  \end{align}
 Recalling $g(O):=\mathcal{H}_{d}(\partial\Gamma)^{-1}\int_{\partial\Gamma}  u\, d{\mathcal H}_d$, we get that
\begin{align}\label{pi2}
\left(\int_{\partial\Gamma} u\, d\nu-\mathcal{H}_{d}(\partial\Gamma)^{-1}\int_{\partial\Gamma}  u\, d{\mathcal H}_d\right)^2\leq C\mathcal{E}^{\partial\Gamma,d}(u,u).
\end{align}
We obtain the first estimate by applying the inequality (\ref{pi2}) to two harmonic measures $\nu,\nu'$ and
 combining them with the triangle inequality.
\qed
\end{proof}

\begin{theo}\label{s}
Assume the Ahlfors-regular conformal dimension of $\partial\Gamma$ is strictly less than 2.
For any $d\in J_{AR}(\partial\Gamma)$ and $\mu\in M_2$,
we have that ${\cal S}_0(\partial\Gamma,d)={\cal S}_0(\partial\Gamma,\mu)$.
 The similar statement holds for ${\cal S}(\partial\Gamma,d)$ and ${\cal S}(\partial\Gamma,\mu)$, also for
 $0(\partial\Gamma,d)$ and $0(\partial\Gamma,\mu)$.
 We will denote those common sets by ${\cal S}_0(\partial\Gamma), {\cal S}(\partial\Gamma)$ and 
 $0(\partial\Gamma)$ respectively.
\end{theo}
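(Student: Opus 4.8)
The plan is to reduce all three identities to a single comparison: that the two ``graph norms'' $E_\delta(v,v)+\|v\|^2_{L^2({\cal H}_\delta)}$ and $E_\mu(v,v)+\|v\|^2_{L^2(\nu)}$ are comparable up to multiplicative constants, uniformly over $v$ in the common space of continuous Besov functions ${\cal C}=B_2(\delta)\cap C(\partial\Gamma)=B_2(\mu)\cap C(\partial\Gamma)$ (the last equality is Lemma \ref{c}, and since $\partial\Gamma$ is compact, $C_0(\partial\Gamma)=C(\partial\Gamma)$, so ${\cal C}=B_2(\delta)\cap C_0(\partial\Gamma)=B_2(\mu)\cap C_0(\partial\Gamma)$). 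By Theorem \ref{iso} one already has $E_\delta(v,v)\asymp E_\mu(v,v)$ for $v\in{\cal C}$, so only the $L^2$-pieces remain — and this is the delicate point, since ${\cal H}_\delta$ and $\nu$ need not even be mutually absolutely continuous, so $\|v\|_{L^2({\cal H}_\delta)}$ and $\|v\|_{L^2(\nu)}$ are \emph{not} comparable on the nose. They are, however, comparable modulo energy. Writing $c:=\int_{\partial\Gamma}v\,d{\cal H}_\delta$ for the ${\cal H}_\delta$-average, Lemma \ref{bpoin} gives $\|v-c\|^2_{L^2({\cal H}_\delta)}\le CE_\delta(v,v)$, the inequality \eqref{pi; corrected} in the proof of Proposition \ref{pi} gives $\|v-c\|^2_{L^2(\nu)}\le CE_\delta(v,v)$, and Proposition \ref{pi} itself gives $\big(\int v\,d\nu-c\big)^2\le CE_\mu(v,v)\asymp CE_\delta(v,v)$. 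Expanding $\|v\|^2_{L^2(\nu)}=\|v-c\|^2_{L^2(\nu)}+2c\big(\int v\,d\nu-c\big)+c^2$ and absorbing the cross term by AM--GM (using that $c^2\le\|v\|^2_{L^2({\cal H}_\delta)}$) yields $\|v\|^2_{L^2(\nu)}\lesssim E_\delta(v,v)+\|v\|^2_{L^2({\cal H}_\delta)}$; the symmetric computation, using $\|v-c\|^2_{L^2(\nu)}\le CE_\mu(v,v)$ from \eqref{nu} (and writing $c=\int v\,d\nu-(\int v\,d\nu-c)$), gives the reverse bound. Hence the two graph norms are comparable on ${\cal C}$, and the analogous statement relating $\nu$ and $\nu'$ for $\mu,\mu'\in M_2$ follows by transitivity of $\asymp$ (or directly from the first inequality of Proposition \ref{pi}).

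Granting this comparison, the equality ${\cal S}_0(\partial\Gamma,\delta)={\cal S}_0(\partial\Gamma,\mu)$ is immediate from the definition: a positive Radon measure $\kappa$ lies in ${\cal S}_0(\partial\Gamma,\delta)$ precisely when $\int|v|\,d\kappa\le C\sqrt{E_\delta(v,v)+\|v\|^2_{L^2({\cal H}_\delta)}}$ for all $v\in B_2(\delta)\cap C_0(\partial\Gamma)={\cal C}$, and since both the test class ${\cal C}$ and (up to constants) the norm agree with the $\mu$-side, the two conditions coincide; the same argument applies to $\mu,\mu'\in M_2$.

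For the capacities I would argue through compact sets. Under the standing assumption the forms $(E_\delta,B_2(\delta))$ and $(E_\mu,B_2(\mu))$ are regular (Theorem \ref{reg}), so ${\cal C}$ is a core for each, and it is standard that for a regular Dirichlet form the capacity of a compact set $K$ may be computed using only continuous core functions that are $\ge 1$ on a neighbourhood of $K$. Applying the graph-norm comparison to such test functions gives $\mathrm{Cap}_\delta(K)\asymp\mathrm{Cap}_\mu(K)$ for every compact $K\subseteq\partial\Gamma$. Since $\mathrm{Cap}_\delta$ and $\mathrm{Cap}_\mu$ are Choquet capacities, $\mathrm{Cap}(G)=\sup\{\mathrm{Cap}(K):K\subseteq G\text{ compact}\}$ for open $G$ and $\mathrm{Cap}(A)=\inf\{\mathrm{Cap}(G):G\supseteq A\text{ open}\}$ for arbitrary $A$; chaining these with the compact comparison yields $\mathrm{Cap}_\delta(A)\asymp\mathrm{Cap}_\mu(A)$ for all $A\subseteq\partial\Gamma$, hence $0(\partial\Gamma,\delta)=0(\partial\Gamma,\mu)$. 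Finally, on the compact space $\partial\Gamma$ the ``nest'' condition in the definition of smoothness is automatic (take $C_n=\partial\Gamma$), so a measure is smooth exactly when it charges no set of zero capacity; as the families of zero-capacity sets now coincide, ${\cal S}(\partial\Gamma,\delta)={\cal S}(\partial\Gamma,\mu)$. Running the same chain for two random walks $\mu,\mu'\in M_2$ completes the proof.

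\textbf{Main obstacle.} The crux is the $L^2$-norm comparison on ${\cal C}$: because ${\cal H}_\delta$ and $\nu$ can be mutually singular, there is no pointwise or norm comparison available, and one must genuinely exploit the Poincar\'e-type inequalities of Proposition \ref{pi} and Lemma \ref{bpoin} (themselves consequences of the Besov-space isomorphism and of isoperimetric/heat-kernel input) together with the observation that the only ``dangerous direction'', the constant function, is harmless since $E_\delta$ annihilates constants and the AM--GM step absorbs the (a priori unbounded) mean value $c$. A secondary, purely technical point is the reduction of capacity computations to continuous core test functions, where one must be mindful of ``quasi-everywhere versus everywhere''; this is handled by the standard Choquet-capacity theory for regular Dirichlet forms on the compact space $\partial\Gamma$.
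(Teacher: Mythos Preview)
Your proof is correct and follows essentially the same approach as the paper: both establish comparability of the $\ce_1$-norms on the common core ${\cal C}$ via the Poincar\'e-type inequalities (Lemma~\ref{bpoin}, Proposition~\ref{pi}, \eqref{nu}, \eqref{pi; corrected}), from which ${\cal S}_0(\partial\Gamma,\delta)={\cal S}_0(\partial\Gamma,\mu)$ is immediate; for the remaining two equalities the paper simply invokes \cite[Theorem~2.2.3]{FOT}, whereas you unpack this via Choquet capacity theory applied to the same norm comparison. One small correction: your claim that the nest condition is automatic on the compact space $\partial\Gamma$ (taking $C_n=\partial\Gamma$) is valid only when $\kappa(\partial\Gamma)<\infty$; for general smooth measures you should instead note that, since you have already shown ${\rm Cap}_\delta\asymp{\rm Cap}_\mu$ on all subsets, any nest for one form is automatically a nest for the other, so condition~(2) of smoothness transfers directly without this reduction.
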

\begin{proof}Choose $d\in J_{AR}(\partial\Gamma)$ and $\mu\in M_2$ arbitrarily.
 The first claim together with Theorem 2.2.3 in \cite{FOT} implies the third one.
 The second claim follows from the third one and the definition of smooth measures. See Definition \ref{sm-measure} and Remark \ref{rem-cpt}.
 Therefore, it suffices to prove the first claim.
 Let $\kappa$ be a positive Radon measure and
 assume that $\kappa\in{\cal S}_0(\partial\Gamma,d)$, namely,
 there exists a constant $C>0$ such that for any $v\in{\cal C}$
 \begin{align}\label{nuh}
 \int_{\partial\Gamma}|v(\xi)|d\kappa(\xi)\leq C\left(\mathcal{E}^{\partial\Gamma,d}_1(v,v)\right)^{1/2}.
 \end{align}
 We will show that $\kappa\in{\cal S}_0(\partial\Gamma,\mu)$.
 Recall that for any $v\in{\cal C}$, we have that $\mathcal{E}^{\partial\Gamma,d}(v,v)\asymp \mathcal{E}^{\partial\Gamma,\mu}(v,v)$.
 By Lemma \ref{bpoin}, we have that
 \begin{align*}
 \inf_{c}\|v-c\|^2_{L^2({\cal H}_{d})}\leq C\mathcal{E}^{\partial\Gamma,d}(v,v)\leq C'\mathcal{E}^{\partial\Gamma,\mu}(v,v),
 \end{align*}
 for a constant $C'>0$ independent of $v$.
 On the other hand, we have that
 \begin{align*}
  \inf_{c}\|v-c\|^2_{L^2({\cal H}_{d})}&=\inf_c\left({\cal H}_{d}(\partial\Gamma)\cdot
  c^2-2c\left(\int_{\partial\Gamma}vd{\cal H}_{d}\right)+\|v\|^2_{L^2({\cal H}_{d})}\right)\\
  &=\|v\|^2_{L^2({\cal H}_{d})}-{\cal H}_{d}(\partial\Gamma)^{-1}\left(\int_{\partial\Gamma}vd{\cal H}_{d}\right)^2.
 \end{align*}
 Thus, by Proposition \ref{pi} we get that
 \begin{align*}
 \|v\|^2_{L^2({\cal H}_{d})}&\leq C'\mathcal{E}^{\partial\Gamma,\mu}(v,v)+{\cal H}_{d}(\partial\Gamma)^{-1}\left(\int_{\partial\Gamma}vd{\cal H}_{d}\right)^2
 \\
 &\leq C'\mathcal{E}^{\partial\Gamma,\mu}(v,v)+C{\cal H}_{d}(\partial\Gamma)\cdot
 \left(\|v\|_{L^1(\nu)}+\sqrt{\mathcal{E}^{\partial\Gamma,\mu}(v,v)}\right)^2\\
 &\leq \left(C'+2C{\cal H}_{d}(\partial\Gamma) \right)\cdot \mathcal{E}^{\partial\Gamma,\mu}(v,v)
 +2{\cal H}_{d}(\partial\Gamma)\|v\|^2_{L^1(\nu)}\\
 &\leq \left(C'+2C{\cal H}_{d}(\partial\Gamma) \right)\cdot \mathcal{E}^{\partial\Gamma,\mu}(v,v)
 +2{\cal H}_{d}(\partial\Gamma)\|v\|^2_{L^2(\nu)},
 \end{align*}
 where we used Jensen's inequality in the last step.
 By substituting the above estimate for the inequality (\ref{nuh}), we get that $\kappa\in{\cal S}_0(\partial\Gamma,\mu)$.
 By using the estimate (\ref{nu}) and Proposition \ref{pi},
 the converse claim can be proved similarly. 
 \qed
\end{proof}\ 
\begin{remark}\label{rem6}
Note that both of ${\cal S}(\partial \Gamma)$ and ${\cal S}_0(\partial\Gamma)$ contain any Hausdorff measure
${\cal H}_{d}$
of a metric $d\in J_{AR}(\partial\Gamma)$
 and any harmonic measure $\nu$ of a random walk driven by a probability measure $\mu\in M_2$
 since ${\cal H}_{d}$ is smooth with respect to $(\mathcal{E}^{\partial\Gamma,d},B_2(d))$
 and $\nu$ is smooth with respect to $(\mathcal{E}^{\partial\Gamma,\mu},B_2(\mu))$.
\end{remark}

\section{Time changes of processes associated with Dirichlet forms}\label{time-change} 
In Section 4, we introduced notions such as smooth measures and measures of finite energy integral,
 which concern the relation between measures and potential theory of Dirichlet forms.  
 In this section, we will introduce several general facts on Dirichlet forms to
 explain the probabilistic interpretation of smooth measures and measures of finite energy integral,
 which are heavily related to time changes of symmetric Markov processes.\par
 
 \subsection{Positive continuous additive functionals}\label{ssc:pcaf}

Let $E$ be a locally compact separable metric space and $m$ be a positive Radon measure on $E$ with full support.
 If we are given a regular Dirichlet form $(\ce,\cf)$ on $L^2(E,m)$,
 we can associate an $m$-symmetric Hunt process $(X_t,\mathbf{P}_x)$ on $E$. 
 In the theory of Dirichlet forms, it is well-known that there is a relationship,
 called {\it the Revuz correspondence},
 between {\it smooth measures} on $E$ and {\it positive continuous additive functionals} (PCAFs in short). 
 See \cite{FOT,CF} for the precise definition of PCAFs.
 We denote the set of smooth measures on $E$ by $S$, and the set of
 PCAFs of the Hunt process $X$ by $\mathbb{A}_c^{+}$. In what follows, we denote the extended Dirichlet space
 of $(\ce,\cf)$ by $(\cf_e,\ce)$.
 
 For a given $A\in\mathbb{A}_c^{+}$, define the measure $\kappa_A$, called the {\it Revuz measure} of $A$, by the following formula: for any $f\in\cb_+(E):=\{f:E\to\mathbb{R}_{\geq0}\ ;\ f\ {\rm is\ Borel\ measurable}\}$,
\begin{eqnarray*}
\langle\kappa_A,f\rangle&=&\lim_{t\downarrow0}\frac{1}{t}\mathbf{E}_m\left[\int_0^tf(X_s)dA_s\right],
\end{eqnarray*}
where $\mathbf{E}_m[\cdot]:=\int \mathbf{E}_x[\cdot]m({\rm d}x)$.
Two positive additive functionals $A,B\in\mathbb{A}_c^+$ are called {\it $m$-equivalent}
 if $\mathbf{P}_m(A_t=B_t)=1$ for every $t>0$.
\begin{theo}\cite[Theorem 5.1.3]{FOT}\cite[Theorem 4.1.1]{CF}\label{7.1}
\begin{description}
 \item[(1)] For any $A\in\mathbb{A}_c^+$, $\kappa_A\in S$.
 \item[(2)] For any $\kappa\in S$, there exists $A\in\mathbb{A}_c^+$ which satisfies $\kappa_A=\kappa$. $A$ is unique 
 up to $m$-equivalence.
 \item[(3)] For $A\in\mathbb{A}_c^+$ and $\kappa\in S$, the following conditions are equivalent.\\
 (a) $\kappa_A=\kappa$\\
 (b) For any $f,h\in\cb_+(E)$ and any $t>0$,
 \begin{align}\label{revuz-corr}
 \mathbf{E}_{h\cdot m}\left[\int_0^{t}f(X_s)dA_s\right]=\int_0^t\langle T_{s}h,f\cdot\kappa\rangle ds,
 \end{align}
 where $(T_s)$ is the semigroup associated to the process $X$.
\end{description}
\end{theo}

 \begin{exa}\label{abs-pcaf}
Let $(\ce,\cf)$ be a regular Dirichlet form on $L^2(E,m)$ and $(X_t)$ be the corresponding $m$-symmetric Hunt process. Then for a nonnegative function $g\in L^1(E,m)$, the absolutely continuous measure
$$g\cdot m(dx):=g(x)m(dx)$$
is smooth with respect to $(\ce,\cf)$. This immediately follows from the absolute continuity and the fact that $m$ itself is a smooth measure with respect to $(\ce,\cf)$.\par
Let us prove that the Revuz correspondence relates $g\cdot m$ to the PCAF
$$A^g_t:=\int_0^t g(X_s)ds.$$ 
We will verify \eqref{revuz-corr}. Let $f,h\in\cb_+(E)$ and $t>0$. By Fubini's theorem, we have that
\begin{align*}
\mathbf{E}_{h\cdot m}\left[\int_0^{t}f(X_s)dA^g_s\right]&=
{\bf E}_{h\cdot m} \left[\int_0^t f(X_s)g(X_s)ds\right]\\
&=\int_0^t ds\,  {\bf E}_{h\cdot m}[(fg)(X_s)].
\end{align*}
Using the $m$-symmetry of $X$, we finally get that
\begin{align*}
\int_0^t ds\,  {\bf E}_{h\cdot m}[ (fg)(X_s)]&=\int_0^t ds\, \langle T_s(fg), h\cdot m\rangle
=\int_0^t ds\, \langle T_sh, fg\cdot m\rangle ,
\end{align*}
which is \eqref{revuz-corr}.
\end{exa}

The following theorem gives the probabilistic interpretation of sets of zero capacity.
\begin{theo}\cite[Theorem 4.2.1]{FOT}
 Let $(\ce,\cf)$ be a regular Dirichlet form on $L^2(E,m)$ and $(X_t)$ be the $m$-symmetric Hunt process
 which corresponds to $(\ce,\cf)$. Then for a set $C\subseteq E$, we have that ${\rm Cap}(C)=0$
 if and only if
 \begin{align*}
 \mathbf{P}_x(X_t\notin C\ {\rm for\ any}\ t\in[0,\infty))=1
 \end{align*}
 for $m$-almost every $x$. 
\end{theo}
For a PCAF $A^{\kappa}\in\mathbb{A}_c^+$ whose Revuz measure is
 $\kappa\in S$, define its right continuous inverse $(A^{\kappa})^{-1}_t$ by
\begin{eqnarray*}
(A^{\kappa})^{-1}_t:=\inf\{s>0; A_s^{\kappa}>t\}.
\end{eqnarray*}
Then the time-changed process $Y_t:=X\circ (A^{\kappa})^{-1}_t$ is a $\kappa$-symmetric Markov process. The new process $(Y_t)$ may not be defined for any $t>0$ even if $(X_t)$ is so: 
when $A^{\kappa}_{\infty}:=\lim_{t\to\infty}A^{\kappa}_t$ is finite, $(Y_t)$ is killed at $A^{\kappa}_{\infty}$.
 The Dirichlet form of $Y_t$ is characterized by the following theorem.

\begin{theo}\label{tc}\cite[Theorem 6.2.1]{FOT}\cite[Theorem 5.2.2]{CF}
 For $B\in\cb(E)$, define the hitting time $\sigma_B$ of $B$ by 
 \begin{eqnarray*}
 \sigma_B:=\inf\{t>0; X_t\in B\}.
 \end{eqnarray*} 
 Let $A^{\kappa}$ be a PCAF whose Revuz measure is $\kappa\in S$, and $(A^{\kappa})^{-1}_t$ be the right continuous inverse
 of $A^{\kappa}$. Denote the support of $A^{\kappa}$ by F. (See \cite[page 234]{FOT} for the definition.)
 Define $(\check{\ce},\check{\cf})$ by 
 \begin{align*}
 \check{\cf}&:=\{u\in L^2(F,\kappa)\ ;\ u=\tilde{\varphi}\ \kappa\mathchar`-a.e.\ {\rm on}\ F\ 
 {\rm for\ some}\ \varphi\in{\cal F}_e\},\\
  \check{\ce}(u,v)&:=\ce(\mathbf{H}_Fu,\mathbf{H}_Fv)\ \ {\it for}\ u,v\in\check{\cf},
 \end{align*}
 where $\tilde{\varphi}$ is a quasi-continuous modification of $\varphi$ (See Theorem 2.1.3 in \cite{FOT}).
  and
 $\mathbf{H}_Ff(x):=\mathbf{E}_x\left[f(X_{\sigma_F}); \sigma_F<\infty\right],$\ $x\in E,\ f\in\cb_+(E)$.
 Then $(\check{\ce},\check{\cf})$ is the regular Dirichlet form on $L^2(F,\kappa)$ which
 corresponds to $Y_t:=X\circ (A^{\kappa})^{-1}_t$.
 We will call $(\check{\ce},\check{\cf})$ the trace of $(\ce,\cf)$ on $F$ with respect to $\kappa$.
\end{theo}


 \section{Reflecting random walks on $\Gamma\cup\partial\Gamma$ and their trace processes on $\partial\Gamma$}\label{reflecting-rw}

In this section, we will give a probabilistic interpretation of the strong Markov
process of jump type associated to the regular Dirichlet form $(\mathcal{E}^{\partial\Gamma,\mu},B_2(\mu))$. 
 In the next subsection we will illustrate what we described above by introducing an example. Specifically, we will generalize the example \eqref{douglas} of the Douglas integral by using notions in Section \ref{time-change}.

 \subsection{Motivating example: Brownian motions and boundary conditions}
 Let $D\subset\mathbb{R}^d$ be a bounded Euclidean domain whose boundary $\partial D$ is sufficiently smooth. There exist at least two natural diffusion processes defined on $D$.\par
 The first one is the absorbing Brownian motion, which we denote by $(\widetilde{{\tt BM}}_t)$. This is a $D$-valued stochastic process which behaves as a $d$-$\dim$ Brownian motion on $\mathbb{R}^d$ until it reaches $\partial D$ and dies out upon hitting the boundary. It is sometimes called the {\it killed} Brownian motion. (Dying, for a Markov process, means being sent to a cemetery point that does not belong to $D$ and staying there forever.) 
 It is known that the corresponding Dirichlet form is given by
 \begin{align}\label{def:df-ab}
 \mathcal{E}^D(f,g)&:=\frac{1}{2}\int_D \left(\sum_{i=1}^d\dfrac{\partial f}{\partial x_i}\cdot\dfrac{\partial g}{\partial x_i}\right)dx_1...dx_d,
 \intertext{with  domain}
 W_0^{1,2}(D)&:=\overline{C_c(D)}^{W^{1,2}(D)}\nonumber,
 \end{align}
 which is regular on $L^2(D)$. Here $C_c(D)$ denotes the set of continuous functions with compact support contained in $D$. 
 See \cite[Example 4.4.1]{FOT}. The domain $ W_0^{1,2}(D)$ consists of functions in the Sobolev space $W^{1,2}(D)$ with zero boundary values, and this absence of nontrivial boundary values of functions in the domain intuitively corresponds to the inactivity of $(\widetilde{\tt BM}_t)$ after its hitting to the boundary.
 \par
 The second one is a reflecting Brownian motion, which we denote by $({\tt BM}_t)$. This is a $(D\cup\partial D)$-valued stochastic process, and it also behaves as a $d$-$\dim$ Brownian motion on $\mathbb{R}^d$ until it reaches $\partial D$, but it reflects off the boundary $\partial D$, then returns to the interior $D$, and keeps moving. The difference between the Dirichlet form corresponding to $({\tt BM}_t)$ and the form $(\mathcal{E}^D, W_0^{1,2}(D))$ is in the domain. Namely, for a reflecting Brownian motion, the domain is replaced by
 \begin{align}\label{def:df-ref}
 W^{1,2}(D):=\left\{f\in L^2(D):\ \dfrac{\partial f}{\partial x_i}\in L^2(D)\ \text{for any}\ 1\leq i\leq d\right\}.
 \end{align}
 It is known that $(\mathcal{E}^D, W^{1,2}(D))$ is also a regular Dirichlet form on $L^2(D\cup\partial D)$. 
 See \cite[Example 1.6.1]{FOT}, \cite[Example 3.5.9]{CF}.\medskip\\
({\bf N.B.} This is a convention used in the theory of Dirichlet forms, which is a bit confusing: since $\partial D$ is a null set, $L^2(D\cup\partial D)$ could be identified with $L^2(D)$. Notice, however, that the definition of the regularity (Definition \ref{def-reg}) involves the set of continuous functions on the state space. Therefore, the claim above should be interpreted as follows: $C(D\cup\partial D)\cap W^{1,2}(D)$ is dense both in $(C(D\cup \partial D),\|\cdot\|_{\infty})$ and $(W^{1,2}(D), (\mathcal{E}^D_1)^{1/2})$).
\par \medskip
Note that $W^{1,2}(D)$ is the maximal domain for the Dirichlet integral \eqref{def:df-ab} to make sense. Furthermore, functions in $W^{1,2}(D)$ display a variety of boundary values and, intuitively speaking, this fact allows the corresponding process to remain active after its hitting to the boundary.\par
Applying Theorem \ref{tc} to the Dirichlet form $(\mathcal{E}^D, W^{1,2}(D))$ corresponding to a reflecting Brownian motion, we can construct its trace form $(\mathcal{E}^{\partial D}, \mathcal{F}^{\partial D})$ on $\partial D$ as follows: let $\kappa$ be the boundary hitting distribution of $({\tt BM}_t)$. Define 
\begin{align}\label{eq:trace-bm}
 \mathcal{F}^{\partial D}&:=\{u\in L^2(\partial D,\kappa);\ u=\tilde{f}\ \kappa\text{-}a.e.\ \text{on}\ \partial D\ \text{for some}\ f\in W^{1,2}(D)\},\\
\mathcal{E}^{\partial D}(u,v)&:=\mathcal{E}^{D}({\bf H}_{\partial D}u,{\bf H}_{\partial D}v)\ \ \text{for}\ u,v\in \mathcal{F}^{\partial D}.
\end{align}
Note that by Definition \ref{def:ext-dirichlet}, it is easy to see that $W_e^{1,2}(D)=W^{1,2}(D)$.
Furthermore, it can be shown that 
$$ \mathcal{F}^{\partial D}=\{u\in L^2(\partial D,\kappa);\ {\bf H}_{\partial D}u\in W^{1,2}(D)\}.$$
See \cite[Section 7.2, Example 7.2.10]{CF} for related discussions.
Therefore, the example \eqref{douglas} of the Douglas integral is exactly what we get when we apply this construction to the two dimensional disc.\par\medskip
In the next subsection, we will address an analogous problem in the context of random walks on hyperbolic groups. Namely, we will first construct a reflecting random walk driven by $\mu\in M_2$, which is a $\Gamma\cup\partial\Gamma$-valued process, by showing that the Dirichlet form 
with maximal domain built from $(\mathcal{E}^\mu,\mathcal{F}^\mu)$ is regular on $L^2(\Gamma\cup\partial\Gamma)$.
We then show that its trace form on $\partial\Gamma$ with respect to the harmonic measure $\nu$ coincides with the Besov space $(\mathcal{E}^{\partial\Gamma,\mu},B_2(\mu))$. 
By Theorem \ref{tc}, this implies the following: Markov processes corresponding to $(\mathcal{E}^{\partial\Gamma,\mu},B_2(\mu))$ is actually the time-change of the reflecting random walk by the right-continuous inverse of the PCAF, which is related to $\nu$ by the Revuz correspondence (Theorem \ref{7.1}).
This gives a concrete probabilistic interpretation of Markov processes corresponding to Besov spaces.
 
 \subsection{Reflected random walks}
 In this subsection, we will finally discuss reflecting extensions in the context of random walks on hyperbolic groups.
 Let us first explain the construction of the reflecting random walk on $\Gamma\cup\partial\Gamma$. 
 Recall the definition of the constant speed random walk from Example \ref{csrw}. 
 Let $\mu\in M_2$, and $(X_t)$ be the constant speed random walk associated to the regular Dirichlet form
 $({\cal E}^{\mu},\ell_2(\Gamma))$ on $\ell_2(\Gamma)$. 
 The process $(X_t)$ does not reach $\partial\Gamma$ in finite time since its holding time at any vertex in $\Gamma$
 is distributed as the exponential distribution with mean 1. \par
 We first take a time change of $(X_t)$
 in such a way that the time-changed process reaches the boundary $\partial\Gamma$ within finite time and dies. This can be viewed as an analogous object to an absorbing Brownian motion on a domain discussed in the last subsection.
 We do it as follows: let $\omega$ be a finite measure on $\Gamma$ with full support.
 Define $(Y_t)$ by taking a time change of $(X_t)$ such that at a vertex $x\in\Gamma$,
 $(Y_t)$ has a holding time distributed as the exponential distribution with mean $\omega(x)$. 
 Then, $(Y_t)$ corresponds to the regular Dirichlet form $({\cal E}^{\mu},\ell_2(\Gamma))$
 on $L^2(\Gamma,\omega)$, and $(Y_t)$ is the time change of $(X_t)$
 by the right-continuous inverse of the PCAF $\int_0^t\omega(X_s)ds$ which is related to $\omega$ by the Revuz correspondence 
 as in Part \ref{ssc:pcaf}. 
(Since $\omega$ has full support, it is obvious that $\omega$ is smooth with respect to
the regular Dirichlet form $({\cal E}^{\mu},\ell_2(\Gamma))$ on $\ell_2(\Gamma)$.) 
Note that we have
\begin{align*}
\mathbb{E}^{\mu}\left[\int_0^{\infty}\omega(X_s)ds\right]&=\int_0^{\infty}\mathbb{E}^{\mu}\left[\omega(X_s)\right]ds\\
&=\int_0^{\infty}\sum_{x\in\Gamma}\mathbb{P}^{\mu}(X_s=x)\omega(x)ds\\
&=\sum_{x\in\Gamma}\omega(x)\int_0^{\infty}\mathbb{P}^{\mu}(X_s=x)ds\\
&\leq \left(\int_0^{\infty}\mathbb{P}^{\mu}(X_s=id)ds\right)\sum_{x\in\Gamma}\omega(x)<\infty,
\end{align*}
where we use the fact that $\omega$ is a finite measure in the last step.
Therefore, it holds that 
\begin{align}\label{eq:pcaf}
\int_0^{\infty}\omega(X_s)ds<\infty\ a.s.
\end{align}
Recall that $(Y_t)$ is the time-change of $(X_t)$ by the right-continuous inverse of $\int_0^t\omega(X_s)ds$.
This together with \eqref{eq:pcaf} implies that almost all trajectories of $(Y_t)$ reach $\partial\Gamma$ at time $\int_0^{\infty}\omega(X_s)ds<\infty$ and die out afterwards. 
 \par \medskip
We next define a new form that gives a reflecting extension of the killed random walk $(Y_t)$. Analogously to the maximal domain \eqref{def:df-ref}, we replace the previous domain $\ell_2(\Gamma)$ with 
$${\cal F}^{\mu}\cap L^2(\Gamma,\omega).$$ 
This domain is indeed the maximal one in the following sense: 
$\mathcal{F}^{\mu}$ is the collection of all functions with finite energy.
On the other hand, a domain of a Dirichlet form, by definition, needs to be a subspace of $L^2$ on which it is defined. Thus we need to take the intersection. Observe however that 
$$({\cal E}^{\mu}, {\cal F}^{\mu}\cap L^2(\Gamma,\omega))\ \text{is }{\it not}\text{ regular on}\  L^2(\Gamma,\omega).$$
 Intuitively, this can be seen as follows:
 since $C_0(\Gamma)$ consists of functions with finite support,
 $C_0(\Gamma)\cap{\cal F}^{\mu}\cap L^2(\Gamma,\omega)$ only contains functions that vanish at infinity. On the other hand, as opposed to the previous domain $\ell_2(\Gamma)$, the new one ${\cal F}^{\mu}\cap L^2(\Gamma,\omega)$ contains functions that have non-zero boundary values since $\omega$ is a finite measure on $\Gamma$. Therefore, $C_0(\Gamma)\cap{\cal F}^{\mu}\cap L^2(\Gamma,\omega)$ cannot be dense in $(({\cal F}^{\mu}\cap L^2(\Gamma,\omega))_1,\mathcal{E}^{\mu})$.

 In order to show the regularity of the new Dirichlet form with the extended domain, we need to enlarge the space on which it is considered.
 We use the Gromov compactification to obtain the regularity on the larger state space $\Gamma\cup\partial\Gamma$ in the following manner:
  consider $\omega$ as a measure on $\Gamma\cup\partial\Gamma$ that gives zero weight 
 to the boundary. Note that $\omega$ still has full support. Then we look at 
  functions defined on 
 $\Gamma\cup\partial\Gamma$ whose restrictions to $\Gamma$
 are in $L^2(\Gamma,\omega)$ and in ${\cal F}^{\mu}$.
 Recall that for any function in ${\cal F}^{\mu}$, we can define its boundary value on $\partial\Gamma$
 thanks to the discussion around \eqref{rw-conv}. 
 We now introduce some new notation.
 We denote by $\tilde{\cal F}^{\mu}$ the set of all functions on $\Gamma\cup\partial\Gamma$ that are extensions of functions in ${\cal F}^{\mu}$. We define
 \begin{align*}
 \tilde{\cal F}^{\mu}(\omega)&:=
 \{f\in\tilde{\cal F}^{\mu} ;\ f|_{\Gamma}\in L^2(\Gamma,\omega)\},\\
 \tilde{\cal E}^{\mu}(f,f)&:={\cal E}^{\mu}(f|_{\Gamma},f|_{\Gamma})\ \ {\rm for}\ f\in\tilde{\cal F}^{\mu}.
 \end{align*}
  The above extension does not change the $L^2$ norm, nor the Dirichlet norm or the $L^{\infty}$ norm.
  In other words, the two Dirichlet forms $({\cal E}^{\mu}, {\cal F}^{\mu}\cap L^2(\Gamma,\omega))$ and $(\tilde{\cal E}^{\mu}, \tilde{\cal F}^{\mu}(\omega))$ are equivalent in the sense in \cite[Appendix A.4, p 422]{FOT}.
 This kind of maximal extensions of domains of Dirichlet forms can be discussed in the more general context using the notion called {\it reflected Dirichlet spaces} introduced in \cite{Ch1}. We refer interested readers to \cite[Chaper 6]{CF} for detailed discussions. 
 In \cite[Section 6.5]{CF}, the following result is proved:
 consider a regular Dirichlet form $(\ce,L^2(\mathbb{V},m))$, introduced in \eqref{srw}, corresponding to a random walk on a general discrete graph. Then the domain of the reflected Dirichlet space of $(\ce,\cf)$ coincides with the set of all functions with finite energy.
 Therefore, what we study here is a specific example of this concept.
 \par\medskip
 We now prove that $(\tilde{\cal E}^{\mu}, \tilde{\cal F}^{\mu}(\omega))$
 is a regular Dirichlet form on $L^2(\Gamma\cup\partial\Gamma,\omega)$.

\begin{theo}\label{w}
Assume that the Ahlfors-regular conformal dimension of $\partial\Gamma$ is strictly less than 2.
Then for $\mu\in M_2$, we have the following:
\begin{itemize}
\item $(\tilde{\cal E}^{\mu}, \tilde{\cal F}^{\mu}(\omega))$
 is a regular Dirichlet form on
 $L^2(\Gamma\cup\partial\Gamma,\omega)$.
 \item The extended Dirichlet space of $(\tilde{\cal E}^{\mu}, \tilde{\cal F}^{\mu}(\omega))$ is
 $(\tilde{\mathcal{F}}^{\mu},\tilde{\mathcal{E}}^{\mu})$.
 \end{itemize}
 \end{theo}
 
\begin{proof}
We start with proving the second item.
Choose $f\in\mathcal{F}^{\mu}$ arbitrarily. 
Let $|\cdot|$ be the word metric with respect to a fixed finite symmetric generating set
and define $f_n$ by 
$f_n(x):=f(x){\bf 1}_{\{|x|\leq n\}}$. Obviously we have $f_n\in\tilde{\cal F}^{\mu}(\omega)$ and $\lim_{n\to\infty}f_n=f\ \omega$-$a.e.$
By Proposition \ref{psc}, It suffices to show that
\begin{align}\label{lim0}
\lim_{n,m\to\infty}{\cal E}^{\rm SRW}(f_n-f_m,f_n-f_m)=0,
\end{align}
where ${\cal E}^{\rm SRW}$ is the energy form associated with a simple random walk on the Cayley graph. 
Without loss of generality, we assume $n>m$. It holds that
\begin{align*}
{\cal E}^{\rm SRW}(f_n-f_m,f_n-f_m)\leq \sum_{x,y; |x|,|y|\geq m-1}(f(x)-f(y))^2\to 0
\end{align*}
as $n,m\to\infty$, which yields the claim.
\par
We next prove the first claim.
In order to prove the theorem, it suffices to show that 
$C(\Gamma\cup\partial\Gamma)\cap\tilde{\cal F}^{\mu}(\omega)$ is dense both in
 $(C(\Gamma\cup\partial\Gamma),\|\cdot\|_{\infty})$ and in
 $(\tilde{\cal F}^{\mu}(\omega), (\tilde{\cal E}^{\mu}_1)^{1/2})$.
 Note that
 \begin{align*}
 C(\Gamma\cup\partial\Gamma)\cap\tilde{\cal F}^{\mu}(\omega)=
 C(\Gamma\cup\partial\Gamma)\cap\tilde{\cal F}^{\mu},
 \end{align*}
 since $\omega$ is a finite measure on $\Gamma\cup\partial\Gamma$.
 \par
We first prove that $C(\Gamma\cup\partial\Gamma)\cap\tilde{\cal F}^{\mu}$
 is dense in $(C(\Gamma\cup\partial\Gamma),\|\cdot\|_{\infty})$.
 By Stone-Weierstrass theorem, we just need to prove that $C(\Gamma\cup\partial\Gamma)\cap\tilde{\cal F}^{\mu}$
 separates points in $\Gamma\cup\partial\Gamma$.
 Obviously, it is enough to prove the claim for points in $\partial\Gamma$. 
 
 Recall the definition of the space ${\bf Lip}_0$ from Part \ref{subsec:regu}. 
 For any $\eta,\xi\in\partial\Gamma$ with $\eta\neq\xi$, there exists a function $u\in{\bf Lip}_0$
 such that $u(\eta)=1$ and $u(\xi)=0$. Note that $u\in{\cal C}$ since ${\bf Lip}_0\subset{\cal C}$. 
 Let $Hu:\Gamma\rightarrow\mathbb{R}$ be the harmonic extension of $u$ as in (\ref{he}). Then
 by Proposition \ref{=} and Theorem \ref{iso}, we get $Hu\in{\cal F}^{\mu}$. 
 
 Take any sequence $(g_n)\subset\Gamma$ converging to $\tau\in\partial\Gamma$.
 Then, by \cite[Lemma 2.2]{Ka}, the sequence of harmonic measures $(\nu_{g_n})$ weakly converges to
 the dirac measure of $\tau$. Since $u$ is bounded and continuous,
 this implies $\lim_{n\to\infty}Hu(g_n)=u(\tau)$. 
Therefore  the function $Hu\cdot{\bf 1}_{\Gamma}+
 u\cdot{\bf 1}_{\partial\Gamma}$ belongs to  $C(\Gamma\cup\partial\Gamma)$.
  This is enough to prove that $C(\Gamma\cup\partial\Gamma)\cap\tilde{\cal F}^{\mu}$
 is dense in $(C(\Gamma\cup\partial\Gamma),\|\cdot\|_{\infty})$. 
 
 We next prove that $C(\Gamma\cup\partial\Gamma)\cap\tilde{\cal F}^{\mu}$ is dense in 
 $( \tilde{\cal F}^{\mu}(\omega), (\tilde{\cal E}^{\mu}_1)^{1/2})$.
 Take $h \in\tilde{\cal F}^{\mu}(\omega)$. 
 Note that by Theorem 1.4.2 (i),(iii) in \cite{FOT}, we can assume that $h$ is a 
 bounded function without loss of generality. 
 By applying the Royden decomposition to $h$, we get that
 $$h=h_0+h_1,\ {\rm where}\ h_0\in\ell_2(\Gamma)\ {\rm and}\ h_1\in\mathbb{HD}(\mu).$$
 By the definition of $\ell_2(\Gamma)$, for any $\varepsilon>0$ there exists $k\in\bn$ such that
 $$\sum_{x\in\Gamma:|id-x|_{\Gamma}>k}h_0(x)^2<\varepsilon.$$
 This implies that $h_0$ can be extended to a continuous function on $\Gamma\cup\partial\Gamma$ by setting $h_0|_{\partial\Gamma}=0$.
 Notice that $\ell_2(\Gamma)\subset\tilde{\cal F}^{\mu}(\omega)$ since functions in $\ell_2(\Gamma)$ are bounded
 and belong to ${\cal F}^{\mu}$ by Proposition \ref{RD}.
 Thus we have that $h_0\in\tilde{\cal F}^{\mu}(\omega)$. 
 Therefore, it suffices to show the claim for $h_1$. We know that $h_1$ is  bounded and belongs to $\mathbb{HD}(\mu)\cap\tilde{\cal F}^{\mu}(\omega)$.\par
 Define 
 $$C_1:=\sup_{x\in\Gamma}h_1(x).$$
 We let $v:\partial\Gamma\rightarrow\mathbb{R}$ be such that $\lim_{t\to\infty}h_1(X_t)=v(R_\infty)$ as in \eqref{rw-conv}. 
Then $v\in B_2(\mu)$. Moreover, by Corollary \ref{new} we have that $h_1=Hv.$ Since ${\cal C}$ is dense in $(B_2(\mu),(\mathcal{E}^{\partial\Gamma,\mu}_1)^{1/2})$, we can take a sequence $(w_n)\subset{\cal C}$ which converges to $v$ in $(B_2(\mu),(\mathcal{E}^{\partial\Gamma,\mu}_1)^{1/2})$.
 Moreover, by Theorem 1.4.2 (v) in \cite{FOT},  we have the same convergence for the sequence $w_n':= w_n\wedge C_1$, namely
 \begin{align}\label{e1to0}
 \mathcal{E}^{\partial\Gamma,\mu}(v-w_n',v-w_n')+\|v-w_n'\|^2_{L^2(\nu)}\rightarrow0.
 \end{align}
 Since we have that $\mathcal{E}^{\partial\Gamma,\mu}(u,u)={\cal E}^{\mu}(Hu,Hu)$ for any $u\in B_2(\mu)$ and $h_1=Hv$, this implies that
 \begin{align}\label{eq:conv}
 {\cal E}^{\mu}(h_1-Hw_n',h_1-Hw_n')\rightarrow0.
 \end{align}
 It is easy to see that \eqref{eq:conv} implies that 
 \begin{align}\label{p-w-con}
 \lim_{n\to\infty}\left(h_1(x)-Hw_n'(x)-c_n\right)=0\ \ {\rm for\ any}\ x\in\Gamma,
 \end{align}
 where
 $$c_n:=h_1(id)-Hw_n'(id).$$ 
 
 First observe that 
$${\cal E}^{\mu}(h_1-Hw_n'-c_n,h_1-Hw_n'-c_n)={\cal E}^{\mu}(h_1-Hw_n',h_1-Hw_n')$$ still converges to $0$. 
 It remains to check that $\|h_1-Hw_n'-c_n\|_{L^2(\Gamma\cup\partial\Gamma,\omega)}\rightarrow0$. 
 
By \eqref{he} and \eqref{e1to0} we have that
 \begin{align*}
 |c_n|&=|h_1(id)-Hw_n'(id)|=|H(v-w_n')(id)|\leq \int_{\partial\Gamma}
 |v(\xi)-w_n'(\xi)|d\nu(\xi)\\
 &\leq
 \|v-w_n'\|_{L^1(\nu)}\leq
 \|v-w_n'\|_{L^2(\nu)}.
 \end{align*} Therefore  the sequence $(c_n)$ converges to $0$ as $n\to\infty$ and  
 $\sup_{m\geq1}|c_m|<\infty$.\par
 
 We have that for any $x\in\Gamma$
\begin{align*}
 |h_1(x)-Hw_n'(x)-c_n|\leq 2C_1+\sup_{m\geq0}|c_m|<\infty.
\end{align*}
Since $\omega$ is a finite measure on $\Gamma$,  the dominated convergence theorem implies the conclusion.
 \qed
\end{proof}
\medskip
By Theorem \ref{w}, there exists a $\Gamma\cup\partial\Gamma$-valued, $\omega$-symmetric process
$(W_t)$ associated to the regular Dirichlet form 
$(\tilde{\cal E}^{\mu}, \tilde{\cal F}^{\mu}(\omega))$.
 Note that $1\in\tilde{\cal F}^{\mu}$ and therefore the process $(W_t)$ is recurrent. 
 (See Theorem 1.6.3 in \cite{FOT}.)

 For a process $(S_t)$ on $\Gamma\cup\partial\Gamma$ and a subset $A\subset\Gamma\cup\partial\Gamma$,
 we define 
 \begin{align*}
 \sigma_S(A):=\inf\{t>0\ ;\ S_t\notin A\}.
 \end{align*} 
The next result shows that $(W_t)$ is an extension of $(Y_t)$.

\begin{prop}\label{tau}
 $(Y_t\ ;\ 0\leq t<\sigma_Y(\Gamma))\overset{(d)}{=}(W_t\ ;\ 0\leq t<\sigma_W(\Gamma)).$
\end{prop}
\begin{proof}By Theorem \ref{ex-dr}, it suffices to prove that the extended Dirichlet spaces associated to the above two processes coincide.
 By the inequality (\ref{P}), there exists a constant $C>0$ such that
 \begin{align*}
 \|f\|_{\ell_2(\Gamma)}^2\leq C{\cal E}^{\mu}(f,f)
 \end{align*} 
 for any $f\in\ell_2(\Gamma)$. Hence the extended Dirichlet space associated to  $(Y_t\ ;\ 0\leq t<\sigma_Y(\Gamma))$
 is $\ell_2(\Gamma)$.\par
 On the other hand, by Theorem 3.4.9 in \cite{CF}, the extended Dirichlet space associated to $(W_t\ ;\ 0\leq t<\sigma_W(\Gamma))$
 is given by $\{f\in\tilde{\cal F}^{\mu}\ ;\ \tilde{f}=0\ q.e.\ {\rm on}\ \partial\Gamma\}$, where $\tilde{f}$ is a quasi-continuous
 modification of $f$. 
 Now we have the decomposition ${\cal F}^{\mu}=\ell_2(\Gamma)\bigoplus\mathbb{HD}(\mu)$ and
  any function in $\mathbb{HD}(\mu)$ with zero boundary value should be identically zero. Thus we get the conclusion.  
\qed
\end{proof}
\medskip

Finally, the next theorem gives a probabilistic interpretation of the regular Dirichlet form \\ $(\mathcal{E}^{\partial\Gamma,\mu},B_2(\mu))$
on $L^2(\partial\Gamma,\nu)$.
Recall that by Remark \ref{rem6}, $\nu$ is smooth with respect to $(\mathcal{E}^{\partial\Gamma,\mu},B_2(\mu))$, therefore there exists a PCAF $A^{\nu}$ related to $\nu$ by the Revuz correspondence. 
\begin{theo}\label{trace}
Assume that the Ahlfors-regular conformal dimension of $\partial\Gamma$ is less than 2.
Then for $\mu\in M_2$, the regular Dirichlet form $(\mathcal{E}^{\partial\Gamma,\mu},B_2(\mu))$
on $L^2(\partial\Gamma,\nu)$ coincides with the trace of 
$(\tilde{\cal E}^{\mu}, \tilde{\cal F}^{\mu}(\omega))$
 on $\partial\Gamma$ with respect to $\nu$.
 In other words, the regular Dirichlet form $(\mathcal{E}^{\partial\Gamma,\mu},B_2(\mu))$
on $L^2(\partial\Gamma,\nu)$ corresponds to the $\nu$-symmetric Hunt process $W\circ (A^{\nu})_t^{-1}$ on $\partial\Gamma$.
\end{theo} 
\begin{remark}
\begin{itemize}
 \item Notice that the trace of 
$(\tilde{\cal E}^{\mu}, \tilde{\cal F}^{\mu}(\omega))$
 on $\partial\Gamma$ with respect to $\nu$ does not depend on the choice of
 $\omega$.
 \item The formula \eqref{revuz-corr} implies that the PCAF $A^{\nu}_t$ only increases at times when the reflecting random walk is on $\partial\Gamma$.
 This is consistent with what we described in the footnote in page 2 for a Brownian motion on the 2-dim disc. 
 \end{itemize}
\end{remark}
\begin{proof} 
Let $(\tilde{E},\tilde{F})$ be the trace of
 $(\tilde{\cal E}^{\mu}, \tilde{\cal F}^{\mu}(\omega))$ on $\partial\Gamma$ with respect to $\nu$.
Then by Theorem \ref{tc},
$(\tilde{E},\tilde{F})$ is given by
\begin{align*}
\tilde{E}(u,u)&:={\cal E}^{\mu}(\mathbf{H}_{\partial\Gamma}u,\mathbf{H}_{\partial\Gamma}u),\\
\tilde{F}&:=\{u\in L^2(\partial\Gamma,\nu)\ ;\ u=\tilde{g}\ \nu\mathchar`-a.e.\ {\rm on}\ \partial\Gamma
\ {\rm for\ some}\ g\in\tilde{\cal F}^{\mu}\},
\end{align*}
where $\tilde{g}$ is a quasi-continuous modification of $g$ and
$\mathbf{H}_{\partial\Gamma}u(g):=\mathbb{E}_g[u(W_{\sigma_W(\Gamma)})]$.
By Proposition \ref{tau}, we get that
\begin{align*}
\mathbb{E}_g[u(W_{\sigma_W(\Gamma)})]=\int_{\partial\Gamma}u(\eta)d\nu_g(\eta)=Hu(g).
\end{align*}
Thus, it suffices to prove that $\tilde{F}=B_2(\mu)$.
 We first show that $\tilde{F}\supset B_2(\mu)$. Take $v\in B_2(\mu)$ arbitrarily.
 By Proposition \ref{=}, there exists $f\in\tilde{\cal F}^{\mu}$ such that 
 $\lim_{t\to\infty}f(X_t)=v(X_{\infty})$ ${\bp}^{\mu}$-$a.s.$
 This implies that $\tilde{f}=v$ $\nu$-$a.e.$ on $\partial\Gamma$.\par
 We next show that $\tilde{F}\subset B_2(\mu)$. Take $u\in\tilde{F}$ arbitrarily.
 Then, we have that $u=\tilde{g}\ \nu\mathchar`-a.e$.\ on\ $\partial\Gamma$
\ for\ some\ $g\in\tilde{\cal F}^{\mu}$. On the other hand, by Proposition \ref{=} again, 
there exists $w\in B_2(\mu)$ such that $\lim_{t\to\infty}g(X_t)=w(X_{\infty})$ ${\bp}^{\mu}$-$a.s.$
 Hence, we get that $u=w$ $\nu$-$a.e.$, and this implies that $u\in B_2(\mu)$.
\qed
\end{proof}

\section{More on the potential theory of harmonic measures}
\subsection{Measures of finite energy integral and heat kernel estimates}
 In this subsection, we will prove an integral condition 
 for measures to be of finite energy integral, see (\ref{log}).
 We will use the heat kernel estimates for non-local Dirichlet forms
 from \cite{GHH,CKW,CK}, which we state below. 
 Recall that the metric $d_0$ belongs to the Ahlfors-regular conformal gauge $J_{AR}(\partial\Gamma)$,
 and $q_0:=\dim(\partial\Gamma,d_0)<2$. This last condition ensures that results
 obtained in \cite{GHH} can be applied to the regular Dirichlet form $(\mathcal{E}^{\partial\Gamma,d_0},B_2(d_0))$ 
 on $L^2(\partial\Gamma,\mathcal{H}_{d_0})$.  We refer to \cite{GHH}, in particular statement (1.16), for the following

 
 \begin{theo}\label{ck}
  There exists a jointly measurable transition density function
 $p_t^0(\cdot,\cdot)$ on $(0,\infty)\times\partial\Gamma\times\partial\Gamma$ 
 associated to the regular Dirichlet form
 $(\mathcal{E}^{\partial\Gamma,d_0},B_2(d_0))$ on $L^2(\partial\Gamma,\mathcal{H}_{d_0})$,
 and $p_t^0(\cdot,\cdot)$ satisfies the following estimates:
 define $P_t(\cdot,\cdot):(0,\infty)\times\partial\Gamma\times\partial\Gamma\to\mathbb{R}_{\geq0}$ by
 \begin{align*}
 P_t(\xi,\eta)=
 \begin{cases}
 t^{-1},\ \ &{\rm when}\ t\geq d_0(\xi,\eta)^{q_0},\\
 \dfrac{t}{d_0(\xi,\eta)^{2q_0}},\ \ &{\rm when}\ 0<t\leq d_0(\xi,\eta)^{q_0},
 \end{cases}
 \end{align*}
 then there exist constants $C_1,C_2>0$ such that 
 \begin{align*}
 (C_1)^{-1}P_t(\xi,\eta)\leq p_t^0(\xi,\eta)\leq C_1P_t(\xi,\eta)
 \end{align*}
 for any $t\in(0,1)$ and any $\xi,\eta\in\partial\Gamma$, and such that
 \begin{align*}
 (C_2)^{-1}\leq p_t^0(\xi,\eta)\leq C_2
 \end{align*}
 for any $t\geq1$ and any $\xi,\eta\in\partial\Gamma$.
 \end{theo}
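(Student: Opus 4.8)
The plan is to obtain Theorem~\ref{ck} by specializing the stable-like heat kernel estimates of \cite{GHH} (see also \cite{CK,CKW}) to the concrete Dirichlet form $(E_{\delta_0},B_2(\delta_0))$ on $L^2(\partial\Gamma,\mathcal{H}_{\delta_0})$. First I would record the structural facts that place us inside the framework of \cite{GHH}: the triple $(\partial\Gamma,\delta_0,\mathcal{H}_{\delta_0})$ is a compact metric measure space which is Ahlfors $q_0$-regular; by Theorem~\ref{reg} the form $(E_{\delta_0},B_2(\delta_0))$ is a regular Dirichlet form; it is conservative, since the constant function lies in $B_2(\delta_0)$ and has vanishing energy; and its jumping density with respect to $\mathcal{H}_{\delta_0}$ is $J(\xi,\eta)=\delta_0(\xi,\eta)^{-2q_0}$. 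Writing $2q_0=q_0+\beta$ with $\beta:=q_0$, the kernel $J$ has the pure $\beta$-stable-like form $\delta_0(\xi,\eta)^{-(q_0+\beta)}$, so that $q_0$ plays the role of the Hausdorff dimension and $\beta$ the role of the walk dimension; the standing assumption that the Ahlfors-regular conformal dimension of $\partial\Gamma$ is strictly less than $2$ is precisely what forces $\beta=q_0\in(0,2)$, which is the admissible range for the jump-process heat kernel theory.

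Next I would check that the hypotheses under which the two-sided bound quoted as statement~(1.16) of \cite{GHH} holds are satisfied here. Given Ahlfors regularity of $\mathcal{H}_{\delta_0}$ and the matching bounds $J(\xi,\eta)\asymp\delta_0(\xi,\eta)^{-(q_0+\beta)}$ with $\beta<2$, the volume-regularity, jump-kernel, and Faber--Krahn / cutoff-Sobolev type conditions used in \cite{GHH} all hold, and therefore there is a jointly measurable transition density $p^0_t(\cdot,\cdot)$ satisfying, for $0<t\le1$,
\begin{align*}
p^0_t(\xi,\eta)\asymp\min\left\{t^{-q_0/\beta},\ \frac{t}{\delta_0(\xi,\eta)^{q_0+\beta}}\right\}.
\end{align*}
Since $\beta=q_0$ we have $q_0/\beta=1$ and $q_0+\beta=2q_0$, so the right-hand side is exactly $q_t(\xi,\eta)$; this is the first displayed estimate of the theorem.

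For the large-time regime $t\ge1$ I would argue by compactness. Because $(\partial\Gamma,\delta_0)$ has finite diameter, normalized so that $\delta_0(\xi,\eta)\le1$, the estimate just established specializes at $t=1$ to $p^0_1(\xi,\eta)\asymp1$ uniformly in $\xi,\eta$. Using the Chapman--Kolmogorov identity, for $t\ge1$ one has
\begin{align*}
p^0_t(\xi,\eta)=\int_{\partial\Gamma}p^0_1(\xi,z)\,p^0_{t-1}(z,\eta)\,d\mathcal{H}_{\delta_0}(z),
\end{align*}
and bounding $p^0_1(\xi,z)$ above and below by constants, together with $\int_{\partial\Gamma}p^0_{t-1}(z,\eta)\,d\mathcal{H}_{\delta_0}(z)=1$ from conservativeness, yields a constant with $C_2^{-1}\le p^0_t(\xi,\eta)\le C_2$ for all $t\ge1$.

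The only genuinely delicate point is the bookkeeping of exponents together with the verification that our form fits the axiomatic setting of \cite{GHH} — in particular, that the jumping density $\delta_0(\xi,\eta)^{-2q_0}$ corresponds to walk dimension $\beta=q_0<2$ and that the regularity, conservativeness, and scaling hypotheses are in force. Once this is in place, the small-time bound is a direct citation, and the large-time bound follows softly from compactness and conservativeness.
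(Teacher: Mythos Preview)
Your proposal is correct and matches the paper's approach: the paper does not give an independent proof of Theorem~\ref{ck} but simply refers to \cite{GHH}, in particular statement~(1.16), after noting that the condition $q_0<2$ places $(E_{\delta_0},B_2(\delta_0))$ inside the framework of that reference. Your write-up is in fact more detailed than the paper, since you explicitly identify the walk dimension $\beta=q_0$, verify the stable-like form of the jump kernel, and supply the Chapman--Kolmogorov argument for the large-time bound; the paper leaves all of this implicit in the citation.
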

 We now introduce the following criterion using
 $p_t^0(\cdot,\cdot)$ for measures of finite energy integral.
 \begin{lem}\label{rho2}
 Let $\kappa$ be a positive Radon measure on $\partial\Gamma$. Then $\kappa\in{\cal S}_0(\partial\Gamma)$
 if and only if
 \begin{align}\label{ene}
 \int_{\partial\Gamma}\int_{\partial\Gamma}\left(\int_0^{\infty}e^{-t}p_t^0(\xi,\eta)dt\right)d\kappa(\xi)
 d\kappa(\eta)<\infty.
 \end{align}
 \end{lem}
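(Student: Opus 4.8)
By Theorem~\ref{s}, $\mathcal{S}_0(\partial\Gamma)=\mathcal{S}_0(\partial\Gamma,\delta_0)$, so it suffices to work with the regular Dirichlet form $(E_{\delta_0},B_2(\delta_0))$ on $L^2(\partial\Gamma,\mathcal{H}_{\delta_0})$, the one covered by Theorem~\ref{ck}. I abbreviate $m:=\mathcal{H}_{\delta_0}$, $\mathcal{E}:=E_{\delta_0}$, and write $(T_t)_{t>0}$, $(G_\alpha)_{\alpha>0}$ for the associated semigroup and resolvent; recall that $(B_2(\delta_0),\mathcal{E}_1)$ is a Hilbert space with $\mathcal{E}_1(G_1f,v)=(f,v)_{L^2(m)}$ for $v\in B_2(\delta_0)$, and that $G_1$ has the jointly measurable kernel $g_1(\xi,\eta):=\int_0^\infty e^{-t}p_t^0(\xi,\eta)\,dt$, i.e.\ $G_1f=\int g_1(\cdot,\eta)f(\eta)\,dm(\eta)$ in $L^2(m)$. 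The first step is to draw from Theorem~\ref{ck} the properties of $g_1$ that the argument needs: $g_1$ is symmetric, a direct computation from the two-sided bounds gives $g_1(\xi,\eta)\asymp 1+\log^+\!\big(1/\delta_0(\xi,\eta)\big)$, so $g_1$ is finite off the diagonal and, by the $q_0$-Ahlfors regularity of $m$, $g_1(\cdot,\eta)\in L^p(m)$ for all $p<\infty$ uniformly in $\eta$; moreover the constant function $1$ lies in $B_2(\delta_0)$ with zero energy, so the semigroup is conservative and $\int g_1(\xi,\eta)\,dm(\xi)=1$ for every $\eta$; finally the on-diagonal bound forces $p_t^0(\eta,\cdot)\,m\to\delta_\eta$ weakly as $t\downarrow0$, hence $T_tv\to v$ pointwise and boundedly for $v\in C(\partial\Gamma)$.

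\textbf{Reduction to a potential-theoretic identity, and the easy direction.} Recall from \cite[Section~2.2]{FOT} and Riesz's theorem that a positive Radon measure $\kappa$ is of finite energy integral with respect to $(\mathcal{E},B_2(\delta_0))$ if and only if $v\mapsto\int v\,d\kappa$ is $\mathcal{E}_1$-bounded on $\mathcal{C}=B_2(\delta_0)\cap C(\partial\Gamma)$, in which case the representing potential $U_1\kappa\in B_2(\delta_0)$ satisfies $\mathcal{E}_1(U_1\kappa,v)=\int v\,d\kappa$ for $v\in\mathcal{C}$ and $\mathcal{E}_1(U_1\kappa,U_1\kappa)=\int U_1\kappa\,d\kappa$. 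The lemma thus reduces to identifying $U_1\kappa$ with $h_\kappa(\xi):=\int g_1(\xi,\eta)\,d\kappa(\eta)$ and to showing that $h_\kappa\in B_2(\delta_0)$ exactly when $\iint g_1\,d\kappa\,d\kappa<\infty$. The ``only if'' part is short: if $\kappa\in\mathcal{S}_0(\partial\Gamma)$, pairing with a bounded $f\in L^2(m)$ and using Fubini and the symmetry of $g_1$ gives $(U_1\kappa,f)_{L^2(m)}=\mathcal{E}_1(U_1\kappa,G_1f)=\int G_1f\,d\kappa=(f,h_\kappa)_{L^2(m)}$, so $U_1\kappa=h_\kappa$ $m$-a.e., and the energy identity reads $\iint g_1\,d\kappa\,d\kappa=\int h_\kappa\,d\kappa=\mathcal{E}_1(U_1\kappa,U_1\kappa)<\infty$.

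\textbf{The converse, and where the difficulty lies.} Assume now $\iint g_1\,d\kappa\,d\kappa<\infty$; then $h_\kappa\in L^1(\kappa)$, and by Jensen together with $g_1(\cdot,\eta)\in L^2(m)$ uniformly in $\eta$ also $h_\kappa\in L^2(m)$, with $\int h_\kappa\,dm=\kappa(\partial\Gamma)<\infty$. The one genuinely substantial step --- the step I expect to be the main obstacle --- is to place $h_\kappa$ inside the domain $B_2(\delta_0)$ and to check it has the defining property of $U_1\kappa$, since finiteness of the double integral is only an ``energy'' bound and does not by itself produce a domain element. I would argue by smoothing with the semigroup: set $\kappa_n:=\rho_n\,m$ with $\rho_n(\xi):=\int p_{1/n}^0(\xi,\eta)\,d\kappa(\eta)$, which lies in $L^2(m)$ because $\int p_{1/n}^0(\xi,\eta)^2\,dm(\xi)=p_{2/n}^0(\eta,\eta)\le C_1n/2$ by Chapman--Kolmogorov and Theorem~\ref{ck}. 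Each $\kappa_n$ is of finite energy integral (as $\int|v|\,d\kappa_n\le\|\rho_n\|_{L^2(m)}\sqrt{\mathcal{E}_1(v,v)}$), with $U_1\kappa_n=G_1\rho_n$ and $\mathcal{E}_1(U_1\kappa_n,U_1\kappa_m)=\iint g_1\,d\kappa_n\,d\kappa_m$; inserting $g_1=\int_0^\infty e^{-u}p_u^0\,du$ and using Chapman--Kolmogorov in each variable yields
\[
\iint g_1\,d\kappa_n\,d\kappa_m=e^{\frac1n+\frac1m}\iint\Big(\int_{\frac1n+\frac1m}^\infty e^{-w}p_w^0(\xi,\eta)\,dw\Big)d\kappa(\xi)\,d\kappa(\eta),
\]
which by dominated convergence with dominant $e\,g_1\in L^1(\kappa\times\kappa)$ tends to $\iint g_1\,d\kappa\,d\kappa$ as $n,m\to\infty$. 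Hence $(U_1\kappa_n)$ is $\mathcal{E}_1$-Cauchy and converges in $B_2(\delta_0)$ to some $h$; the same computation gives $U_1\kappa_n\to h_\kappa$ pointwise $m$-a.e.\ (finiteness from $\int h_\kappa\,dm<\infty$), so $h=h_\kappa\in B_2(\delta_0)$; and for $v\in\mathcal{C}$, $\mathcal{E}_1(h_\kappa,v)=\lim_n\int v\,d\kappa_n=\lim_n\int T_{1/n}v\,d\kappa=\int v\,d\kappa$ by the bounded pointwise convergence $T_{1/n}v\to v$ and dominated convergence for the finite measure $\kappa$. This exhibits $h_\kappa=U_1\kappa$, so $\kappa\in\mathcal{S}_0(\partial\Gamma)$. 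The three places that really use Theorem~\ref{ck}, rather than soft functional analysis, are the logarithmic integrability of $g_1$, the $L^2$-regularity of the smoothed densities $\rho_n$, and the Chapman--Kolmogorov computation behind the Cauchy estimate.
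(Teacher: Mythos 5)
Your argument is correct in substance, but it is worth knowing that the paper disposes of this lemma in one line: after reducing to the form $(E_{\delta_0},B_2(\delta_0))$ via Theorem~\ref{s} (exactly as you do), it simply invokes Problem~4.2.1 of \cite{FOT}, which states that for a regular Dirichlet form whose resolvent admits a symmetric jointly measurable density $g_1(\xi,\eta)=\int_0^\infty e^{-t}p_t^0(\xi,\eta)\,dt$, a positive Radon measure is of finite energy integral if and only if $\iint g_1\,d\kappa\,d\kappa<\infty$. What you have written is essentially a self-contained proof of that cited exercise, and your converse direction (smoothing $\kappa$ by the semigroup, the Chapman--Kolmogorov identity $\iint g_1\,d\kappa_n\,d\kappa_m=e^{1/n+1/m}\iint\bigl(\int_{1/n+1/m}^\infty e^{-w}p_w^0\,dw\bigr)\,d\kappa\,d\kappa$, the resulting $\mathcal{E}_1$-Cauchy property, and the identification of the limit with $U_1\kappa$ via $\int v\,d\kappa_n=\int T_{1/n}v\,d\kappa\to\int v\,d\kappa$) is precisely the standard argument. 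The added value of your version is that it makes explicit where Theorem~\ref{ck} is actually used — joint measurability and symmetry of the density, the bound $g_1\asymp 1+\log^+(1/\delta_0(\xi,\eta))$, and the $L^2$-control of $\rho_n$ — i.e.\ it verifies the hypotheses of the FOT statement rather than taking them for granted; the cost is length and one technical point you should tighten. In the ``only if'' direction, the identity $\mathcal{E}_1(U_1\kappa,U_1\kappa)=\int\widetilde{U_1\kappa}\,d\kappa$ involves the \emph{quasi-continuous} version of $U_1\kappa$, and you have only identified $U_1\kappa$ with $h_\kappa$ $m$-a.e.; since $\kappa$ need not be absolutely continuous with respect to $m$, you cannot immediately replace $\widetilde{U_1\kappa}$ by $h_\kappa$ under $\int\cdot\,d\kappa$. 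The gap closes because $h_\kappa$ is $1$-excessive (you essentially prove $e^{-t}T_t h_\kappa\uparrow h_\kappa$ pointwise in your own computation), hence quasi-continuous and so equal to $\widetilde{U_1\kappa}$ quasi-everywhere, and $\kappa$ charges no set of zero capacity; alternatively, run your $n=m$ Chapman--Kolmogorov identity with monotone convergence to get $\iint g_1\,d\kappa\,d\kappa=\lim_n\mathcal{E}_1(U_1\kappa_n,U_1\kappa_n)$ and bound the right-hand side by $\mathcal{E}_1(U_1\kappa,U_1\kappa)$. Either repair is routine, and with it your proof is complete.
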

 \begin{proof}
 The above equivalence immediately follows from Theorem \ref{s} and Problem 4.2.1 in \cite{FOT}.
 \qed
 \end{proof}
 \medskip
 Using the estimates of $p_t^0(\cdot,\cdot)$ given in Theorem \ref{ck} and
  computing the left hand side
 of (\ref{ene}), we obtain the following criterion for measures of finite energy integral. 
 
 \begin{prop}\label{criterion}
 Let $\kappa$ be a positive Radon measure on $\partial\Gamma$. Then $\kappa\in{\cal S}_0(\partial\Gamma)$
 if and only if 
 \begin{align}\label{log}
 \int_{\partial\Gamma}\int_{\partial\Gamma}|\log d(\xi,\eta)|d\kappa(\xi)d\kappa(\eta)<\infty,
 \end{align}
 for some ($\Leftrightarrow$ {\it any}) metric $d\in J_{AR}(\partial\Gamma)$.
 \end{prop}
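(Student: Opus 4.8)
The plan is to combine the criterion of Lemma~\ref{rho2} with the explicit two-sided heat kernel bounds of Theorem~\ref{ck}, reducing the abstract energy condition (\ref{ene}) to the concrete logarithmic integral (\ref{log}). Since the equivalence in Proposition~\ref{criterion} is between a fixed-but-arbitrary metric $\delta \in J_{AR}(\partial\Gamma)$ and the condition (\ref{ene}), and since (\ref{ene}) does not reference any metric, the ``for some $\Leftrightarrow$ for any'' part will follow automatically once we establish the equivalence for the single metric $\delta_0$; I will then note that for any two metrics $\delta,\delta'\in J_{AR}(\partial\Gamma)$ the quantities $|\log\delta(\xi,\eta)|$ and $|\log\delta'(\xi,\eta)|$ are comparable up to multiplicative and additive constants (this is a standard consequence of the quasi-symmetry relating metrics in the same Ahlfors-regular conformal gauge together with Ahlfors-regularity, which forces a power-law relation $\delta' \asymp \delta^{\beta}$ at small scales for some $\beta>0$), so finiteness of the double integral is independent of the choice of metric in $J_{AR}(\partial\Gamma)$.

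The main computation is to estimate $\int_0^{\infty} e^{-t} p_t^0(\xi,\eta)\,dt$ from above and below by a constant times $|\log\delta_0(\xi,\eta)|$, up to additive $O(1)$ terms. First I would split the integral at $t=1$. For $t\geq 1$, Theorem~\ref{ck} gives $(C_2)^{-1}\leq p_t^0(\xi,\eta)\leq C_2$, so $\int_1^{\infty} e^{-t} p_t^0(\xi,\eta)\,dt$ is bounded above and below by positive constants independent of $\xi,\eta$; this part is harmless. For $0<t<1$ I would use $e^{-t}\asymp 1$ and the bounds $p_t^0 \asymp q_t$, reducing matters to estimating $\int_0^1 q_t(\xi,\eta)\,dt$. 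Writing $r := \delta_0(\xi,\eta)$ and $s := r^{q_0}$ (note $s\in(0,\mathrm{diam}^{q_0})$, so $s\lesssim 1$), the definition of $q_t$ gives, when $s\leq 1$,
\begin{align*}
\int_0^1 q_t(\xi,\eta)\,dt
&= \int_0^{s} \frac{t}{r^{2q_0}}\,dt + \int_{s}^{1} t^{-1}\,dt
= \frac{s^2}{2 r^{2q_0}} + \log\frac{1}{s}
= \frac12 + q_0 \log\frac{1}{r}.
\end{align*}
Hence $\int_0^1 q_t(\xi,\eta)\,dt = \tfrac12 + q_0|\log \delta_0(\xi,\eta)|$ whenever $\delta_0(\xi,\eta)\leq 1$, and it is bounded when $\delta_0(\xi,\eta)>1$. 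Combining the two ranges of $t$ and the two-sided bounds $p_t^0\asymp q_t$ yields constants $c_1,c_2>0$ and a constant $c_3$ with
\begin{align*}
c_1\big(|\log\delta_0(\xi,\eta)| + 1\big) \le \int_0^{\infty} e^{-t} p_t^0(\xi,\eta)\,dt \le c_2\big(|\log\delta_0(\xi,\eta)| + 1\big)
\end{align*}
for all $\xi,\eta\in\partial\Gamma$ with $\xi\neq\eta$. Since $\kappa$ is a finite (Radon, hence finite on the compact space $\partial\Gamma$) measure, the additive constant contributes only $c\,\kappa(\partial\Gamma)^2<\infty$, so integrating against $d\kappa(\xi)\,d\kappa(\eta)$ shows that (\ref{ene}) holds if and only if $\int\int |\log\delta_0(\xi,\eta)|\,d\kappa(\xi)\,d\kappa(\eta)<\infty$. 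By Lemma~\ref{rho2} this is equivalent to $\kappa\in{\cal S}_0(\partial\Gamma)$, which is the claim for $\delta=\delta_0$.

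The only real subtlety — and the step I would be most careful about — is the passage from $\delta_0$ to an arbitrary metric $\delta\in J_{AR}(\partial\Gamma)$ in the ``for some $\Leftrightarrow$ for any'' clause. One has to verify that $|\log\delta(\xi,\eta)|$ and $|\log\delta_0(\xi,\eta)|$ are comparable up to affine changes uniformly in $\xi,\eta$; this uses that two Ahlfors-regular metrics in the same conformal gauge are related by a quasi-symmetry, and a quasi-symmetry between Ahlfors-regular metrics is a ``power quasi-symmetry'', giving $\delta(\xi,\eta)^{1/\beta}\lesssim \delta_0(\xi,\eta)\lesssim \delta(\xi,\eta)^{\beta}$ for small distances and some $\beta\geq 1$ (see the discussion of snowflake-type equivalences in the metric-space literature, e.g.\ the references cited in \cite{BP}). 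Once this is in hand, the logarithms differ by a bounded multiplicative factor plus a bounded additive term (again absorbed using finiteness of $\kappa$), so finiteness of the double integral does not depend on the metric chosen. Everything else is the elementary integral computation above, so no further obstacle arises.
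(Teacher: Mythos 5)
Your proposal is correct and follows essentially the same route as the paper: reduce to the heat-kernel energy criterion of Lemma \ref{rho2}, split at $t=1$ and according to whether $\delta_0(\xi,\eta)\le 1$, and compute $\int_0^1 q_t(\xi,\eta)\,dt=\tfrac12+q_0|\log\delta_0(\xi,\eta)|$ exactly as the paper does. The only cosmetic difference is that for the metric-independence of \eqref{log} the paper simply cites Corollary 11.5 of \cite{Hei}, whereas you sketch the underlying power-quasi-symmetry argument yourself.
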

 \begin{proof} By Corollary 11.5 in \cite{Hei}, we know that the finiteness of the integral (\ref{log}) does not depend on the choice of
 $d\in J_{AR}(\partial\Gamma)$. Thus, in the light of Lemma \ref{rho2},
  we only need to compare the two integrals (\ref{ene}) and
 (\ref{log}) when we choose $d=d_0$ in (\ref{log}).
 Since ${\rm diam}(\partial\Gamma,d_0)<\infty$ and $\kappa$ is a positive Radon measure, we have that
 \begin{align*}
 \int_{\partial\Gamma}\int_{\partial\Gamma}|\log d_0(\xi,\eta)|
 {\bf 1}_{\{d_0(\xi,\eta)>1\}}d\kappa(\xi)d\kappa(\eta)<\infty.
 \end{align*}
 By using the estimates in Theorem \ref{ck}, it is immediate to check that
 \begin{align*}
 \int_{\partial\Gamma}\int_{\partial\Gamma}\left(\int_0^{\infty}e^{-t}p_t^0(\xi,\eta)dt\right){\bf 1}_{\{d_0(\xi,\eta)>1\}}
 d\kappa(\xi)d\kappa(\eta)<\infty.
 \end{align*}
 Note that by Theorem \ref{ck}, $p_t^0(\cdot,\cdot)$ is of constant order for $t\geq1$.
 Thus in order to show the converse claim,
 it suffices to prove that  
 \begin{align*}
 \int_{\partial\Gamma}\int_{\partial\Gamma}|\log d_0(\xi,\eta)|
 {\bf 1}_{\{d_0(\xi,\eta)\leq1\}}d\kappa(\xi)d\kappa(\eta)<\infty,
 \end{align*}
 if and only if
  \begin{align*}
 \int_{\partial\Gamma}\int_{\partial\Gamma}\left(\int_0^{1}e^{-t}p_t^0(\xi,\eta)dt\right)
 {\bf 1}_{\{d_0(\xi,\eta)\leq1\}}d\kappa(\xi)d\kappa(\eta)<\infty 
 \end{align*} 
 or equivalently if and only if 
   \begin{align*}
 \int_{\partial\Gamma}\int_{\partial\Gamma}\left(\int_0^{1}p_t^0(\xi,\eta)dt\right)
 {\bf 1}_{\{d_0(\xi,\eta)\leq1\}}d\kappa(\xi)d\kappa(\eta)<\infty.
 \end{align*} 

Let $\xi$ and $\eta$ be such that $d_0(\xi,\eta)\leq1$. It immediately follows from the bounds in Theorem \ref{ck} that 
$$(C_1)^{-1}(\frac 12 -q_0\log d_0(\xi,\eta))
\leq \int_0^1  p_t^0(\xi,\eta)dt \leq C_1(\frac 12 -q_0\log d_0(\xi,\eta)).$$

 Thus, we get the conclusion.\ \ \ \qed
 \end{proof}
 \medskip
 We next give an alternative proof of one implication of Proposition \ref{criterion} which does not involve either the heat kernel estimates or the assumption $q_0<2$.
 We will prove that if \eqref{log} is finite then $\kappa\in {\cal S}_0(\partial\Gamma)$. 
 
  \begin{proof}
 Let $v\in{\cal C}$ and define $g:V(\Gamma_{d})\to\br$ as in \eqref{exx}.
 Notice that there exists a surjective continuous map from the set of geodesic rays ${\cal R}$
  in $\Gamma_{d}$ emanating from $O$
 to $\partial\Gamma$, which is defined by $r\mapsto \lim_{t\to\infty}r(t)$ ($r\in{\cal R}$).
 Now we take a measurable section $\xi\in\partial\Gamma\to r_{\xi}\in{\cal R}$.
 See \cite[Chapter I]{Par} for the existence of such a measurable section.  
 Remark that by the continuity of $v$,
  for any sequence $(x_n)\subset V(\Gamma_{d})$ and $\xi\in\partial\Gamma$
 with $\lim_{n\to\infty}x_n=\xi$ we have that $\lim_{n\to\infty}g(x_n)=v(\xi)$.
 Thus for any positive Radon measure $\kappa$ on $\partial\Gamma$, we have that
 \begin{align*}
 \int_{\partial\Gamma}vd\kappa-\int_{\partial\Gamma}vd{\cal H}_{d}
 =\int_{\partial\Gamma}d\kappa(\xi)\left(\sum_{e\in\beta(\xi)}dg(e)\right),
 \end{align*}
 where $\beta(\xi)$ is the set of edges in $\Gamma_{d}$ that $r_{\xi}$ passes through. 
 By the Cauchy-Schwartz inequality, we get that
 \begin{align*}
 \int_{\partial\Gamma}d\kappa(\xi)\left(\sum_{e\in\beta(\xi)}dg(e)\right)
 &=\sum_{e\in E(\Gamma_{d})}\left(\int_{\partial\Gamma}d\kappa(\xi){\bf 1}_{\{e\in\beta(\xi)\}}\right)dg(e)\\
 &\leq  \|dg\|_{\ell_2(E(\Gamma_{d}))}\cdot
 \sqrt{
 \sum_{e\in E(\Gamma_{d})}\left(\int_{\partial\Gamma}d\kappa(\xi){\bf 1}_{\{e\in\beta(\xi)\}}\right)^2
 }.
\end{align*}
 For any fixed $e\in E(\Gamma_{d})$, we have that
 \begin{align*}
 \left(\int_{\partial\Gamma}d\kappa(\xi){\bf 1}_{\{e\in\beta(\xi)\}}\right)^2
 =\int_{\partial\Gamma}d\kappa(\xi)\int_{\partial\Gamma}d\kappa(\eta){\bf 1}_{\{e\in\beta(\xi)\cap\beta(\eta)\}}.
 \end{align*}
 Thus,
 \begin{align*}
 \sum_{e\in E(\Gamma_{d})}\left(\int_{\partial\Gamma}d\kappa(\xi){\bf 1}_{\{e\in\beta(\xi)\}}\right)^2
 =\int_{\partial\Gamma}d\kappa(\xi)\int_{\partial\Gamma}d\kappa(\eta)|\beta(\xi)\cap\beta(\eta)|,
 \end{align*}
 where $|A|$ denotes the cardinality of the set $A$.
 By using the tree approximation for $d$-hyperbolic metric spaces (for instance Theorem 1.1 in \cite{Coo}, which refers to \cite{Gro}), we know that there exists a constant $C>0$ such that $|\beta(\xi)\cap\beta(\eta)|$ is bounded above by
 $(\xi|\eta)_O^{d}+C$, where $(\xi|\eta)_O^{d}$ is the Gromov product of $\xi$ and $\eta$ on $\Gamma_{d}$
 with respect to the base point $O$. Noticing that by Proposition 2.1 in \cite{BP}, $\exp(-(\xi|\eta)^{d}_O)$ is comparable to
 $d(\xi,\eta)$, hence $|\beta(\xi)\cap\beta(\eta)|$ is bounded above by $|\log d(\xi,\eta)|+C$. Thus we get the desired result. \qed
 \end{proof}

\subsection{Driving measures with a finite first moment} 

We only proved so far that harmonic measures with driving measures in $M_2$ are of finite energy integral. 
Now we extend Theorem \ref{s} to a harmonic measure of a random walk driven by $\mu\in M_1$. 
 The proof uses Proposition \ref{criterion} and
 the deviation inequality shown in \cite{MS} which controls how a path of a random walk on $\Gamma$ deviates
 from geodesics.

\begin{theo}\label{1st}
Assume the Ahlfors-regular conformal dimension of $\partial\Gamma$ is strictly less than 2.
Then, both of ${\cal S}(\partial \Gamma)$ and ${\cal S}_0(\partial\Gamma)$ contain
 any harmonic measure $\nu$ of a random walk driven by a probability measure $\mu\in M_1$.
\end{theo}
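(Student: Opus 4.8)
The plan is to run everything through the integral criterion of Proposition~\ref{criterion}. Since measures of finite energy integral are in particular smooth, $\mathcal{S}_0(\partial\Gamma)\subseteq\mathcal{S}(\partial\Gamma)$, so it suffices to prove $\nu\in\mathcal{S}_0(\partial\Gamma)$. By Proposition~\ref{criterion}, applied with a visual metric $\delta\in J_{AR}(\partial\Gamma)$ — for which $|\log\delta(\xi,\eta)|$ is comparable, up to multiplicative and additive constants, to the Gromov product $(\xi|\eta)_{id}$ of the word metric based at the neutral element — this amounts to showing
\[
\iint_{\partial\Gamma\times\partial\Gamma}(\xi|\eta)_{id}\,d\nu(\xi)\,d\nu(\eta)=\be^{\mu}\big[(R_\infty|R'_\infty)_{id}\big]<\infty ,
\]
where $(R_n)$ and $(R'_n)$ are two independent $\mu$-random walks started at $id$, converging a.s.\ to $R_\infty,R'_\infty\in\partial\Gamma$ of common law $\nu$. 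As the Gromov product is non-negative, it is enough to prove $\sum_{L\ge1}\bp^{\mu}\big((R_\infty|R'_\infty)_{id}\ge L\big)<\infty$; morally this says that two independent walks separate from one another exponentially fast, measured in the depth to which they have progressed towards the boundary.

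The idea is then to transfer this boundary estimate to the trajectories. Let $\ell:=\lim_n d(id,R_n)/n>0$ be the (positive) drift of the walk, which exists since $\Gamma$ is non-elementary hyperbolic and $\mu\in M_1$. The event $\{(R_\infty|R'_\infty)_{id}\ge L\}$ means that the geodesic rays from $id$ to $R_\infty$ and to $R'_\infty$ stay at bounded distance up to distance $\asymp L$ from $id$. Using the deviation inequalities of \cite{MS} valid for $\mu\in M_1$, I would argue that, outside an event of probability summable in $L$, the two trajectories both pass, at a time comparable to $L/\ell$, within a sublinear distance $\psi(L)=o(L)$ of a common vertex $w$ lying at distance $\asymp L$ from $id$ and depending only on $(R'_n)$. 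Conditioning on $(R'_n)$, the $(R_n)$-probability of this latter event is then bounded by $\sum_{n\asymp L/\ell}\mu^{*n}\big(B(w,\psi(L))\big)$, and by Kesten's inequality $\mu^{*n}(x)\le\varrho^{\,n}$ — with $\varrho<1$ the spectral radius of the walk, which is $<1$ because $\Gamma$ is non-amenable — together with the exponential volume growth of $\Gamma$, this is at most $C_\Gamma\,e^{a\psi(L)}\,\varrho^{\,cL}$ for suitable $a,c>0$. Since $\psi$ is sublinear, $e^{a\psi(L)}\varrho^{\,cL}\to 0$ faster than any power of $L$, so the series converges; adding the summable error coming from the deviation inequalities yields the claim.

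The hard part will be the middle step: making rigorous, at the borderline moment class $M_1$, the passage from ``the two rays fellow-travel up to depth $L$'' to ``the two walks both come within a sublinear distance of a common far vertex at a controlled time, off a summably rare event''. Here one has neither Ancona's inequality nor exponential deviation estimates, and, crucially, a single step of the walk can be of size $\asymp L$, an event which is not summably rare under a finite first moment assumption; controlling the interplay of these large steps with the required geodesic-tracking and time-window estimates is exactly where the full strength of \cite{MS} enters, and is the technical heart of the argument. By contrast, once that input is granted, what remains is the elementary computation above, whose only real content is that the exponential gain $\varrho^{\,cL}$ coming from non-amenability beats the factor $e^{a\psi(L)}$ produced by the sublinear tracking scale. (For $\mu\in M_2$ this whole scheme collapses to the Poincar\'e-type inequality of Proposition~\ref{pi} feeding into Theorem~\ref{s}, so the genuinely new work is entirely in relaxing the moment condition.)
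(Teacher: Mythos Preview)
Your reduction to Proposition~\ref{criterion} and the identification of the integral with $\be^{\mu}[(R_\infty\vert R'_\infty)_{id}]$ is exactly the paper's starting point. From there, however, the paper takes a much shorter route than the one you outline. By the symmetry of $\mu$ and left-invariance of the word metric, one has for all $n,m$
\[
\be^{\mu}\big[(R_n,R'_m)_{id}\big]=\be^{\mu}\big[(id,R_{n+m})_{R_n}\big],
\]
reducing the two-walk quantity to a Gromov product along a \emph{single} trajectory. Theorem~11.1 of \cite{MS} then gives directly
\[
\sup_{n,m}\be^{\mu}\big[(id,R_{n+m})_{R_n}\big]<\infty,
\]
and Fatou's lemma (using lower semicontinuity of the Gromov product at the boundary) finishes the proof in two lines. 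No tail summability, no tracking estimates, no Kesten bound are needed: the deviation result in \cite{MS} is already packaged as an $L^1$ bound on exactly the right quantity.

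Your alternative route --- bounding $\bp^{\mu}\big((R_\infty\vert R'_\infty)_{id}\ge L\big)$ via geodesic tracking and spectral-radius estimates --- is a reasonable heuristic, but as you yourself flag, the ``hard part'' is a real gap. Under only a first-moment assumption the deviation inequalities in \cite{MS} do not obviously yield \emph{summable} error probabilities for the event ``both walks lie within $\psi(L)$ of a common depth-$L$ vertex at a prescribed time window''; a single increment of size $\asymp L$ has probability that is not summable in $L$, and your sketch does not explain how the argument survives this. So the proposal is not a proof as it stands. The symmetry trick above sidesteps the issue entirely by never asking for tail control, only for the uniform $L^1$ bound that \cite{MS} actually provides.
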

 
\begin{proof}
Denote by $(R_n)$ the random walk driven by $\mu\in M_1$.
 If we choose as $d$ a visual metric $\rho_{\Gamma}$ on $\partial\Gamma$ for (\ref{log}),
 the integral (\ref{log}) is finite if and only if
 \begin{align*}
 \be^{\mu}[(R_{\infty},R'_{\infty})_{id}]<\infty,
 \end{align*}
 where $(\cdot,\cdot)_{id}$ is the Gromov product with respect to the base point $id$
 which is computed in the word metric, and $(R'_n)$ is a random walk
 driven by $\mu$ starting at $id$ which is independent of $(R_n)$.
 By the symmetry of $(R_n)$ and $(R'_n)$, we observe that for any $n,m\in\mathbb{N}$
 \begin{align*}
 \be^{\mu}[(R_{n},R'_{m})_{id}]=\be^{\mu}[(id,R_{n+m})_{R_n}].
 \end{align*}
 By the second statement in \cite[Theorem 11.1]{MS}, we have that
 \begin{align*}
\sup_{n,m\in\mathbb{N}}\be^{\mu}[(id,R_{n+m})_{R_n}]<\infty,
\end{align*}
 which implies the conclusion.
 \qed
 \end{proof}

\medskip
  {\it Acknowledgments.} 
  The authors would like to thank an anonymous referee for numerous helpful comments.
  The first author would like to thank Professor T. Kumagai and the Research Institute for Mathematical Sciences for
   kind hospitality during his stay in Kyoto.
  The second author would like to thank Professor T. Kumagai for his constant encouragement.
  The second author was supported by Grant-in-Aid for JSPS Research Fellow 16J02351.
  This work was supported by University Grants
for student exchange between universities in partnership under Top Global University
Project of Kyoto University.

\end{document}